\title{A characterization of pseudo complete finitely ramified valued fields through a Hahn-like construction}
\author{Anna De Mase\protect{\footnote{Dipartimento di Matematica e Fisica, Università degli Studi della Campania ``Luigi Vanvitelli", viale Lincoln 5, Caserta (Italy), anna.demase@unicampania.it}}}
\date{}
\begin{document}
\maketitle
\newtheorem{thm} {Theorem}[section]
\newtheorem{defn} [thm] {Definition}
\newtheorem{cor} [thm] {Corollary}
\newtheorem{rem} [thm] {Remark}
\newtheorem{lem} [thm] {Lemma}
\newtheorem{prop} [thm] {Proposition}
\newtheorem{fact} [thm] {Fact}
\newtheorem{conj} [thm] {Conjecture}
\begin{abstract} We give a characterization of finitely ramified $\omega$-pseudo complete valued fields of mixed characteristic $(0,p)$, with fixed residue field $k$ and value group $G$ of cardinality $\aleph_{1}$ in terms of a Hahn-like construction over the Cohen field $C(k)$, modulo the Continuum Hypothesis. This is a generalization of results due to Ax and Kochen in '65 for formally $p$-adic fields and by Kochen in '74 for unramified valued fields with perfect residue field. We consider a more general context of finitely ramified valued fields of mixed characteristic with arbitrary residue field and a cross-section. \end{abstract}
\vspace{2mm}
\textbf{MSC:} \textit{primary} 03C60, 16W60; \textit{secondary} 12J20, 11D88.\newline
\textbf{Key words:} valued fields, saturation, pseudo completeness, Hahn series.
\section*{Introduction}
In their paper (\cite{AxK65}), Ax and Kochen give a complete axiomatization of the theory of the class of formally $p$-adic fields, by the following properties: \begin{itemize} \item the value group is a $\mathbb{Z}$-group; \item the residue field is $\mathbb{F}_{p}$; \item $v(p)$ is the minimal positive element of the value group. \end{itemize} Thus, a valued field $K$ is formally $p$-adic if and only if it is elementarily equivalent to $\mathbb{Q}_{p}$.
In \cite{20padic} Macintyre gives a broad analysis of the field $\mathbb{Q}_{p}$, both from an algebraic and a model theoretical point of view, focusing also on its comparison with the field of real numbers. Indeed, they share the property of being completions of the field of rational numbers, $\mathbb{R}$ with respect to the usual archimedean absolute value, and $\mathbb{Q}_{p}$ with respect to the non-archimedean one associated to the $p$-adic valuation. Hasse's local-global principle (\cite{HasseH24}) shows the role played by both fields in the existence of solutions of rational equations. We are mainly interested in a property not shared by both fields. It is a well-known property of real closed fields that a field elementarily equivalent to a finite extension of the field of real numbers $\mathbb{R}$ contains a subfield that is elementarily equivalent to $\mathbb{R}$. This is not the case for formally $p$-adic fields. Indeed, in \cite{BvDDM88} the authors show that for any finite extension $L$ of $\mathbb{Q}_{p}$, there is a valued field elementarily equivalent to $L$ that does not have any subfield elementarily equivalent to $\mathbb{Q}_{p}$. This ``pathology" of the $p$-adics can be avoided considering $\omega$-pseudo complete formally $p$-adic fields, which have a subfield that is an isomorphic copy of the field of $p$-adic numbers, as it follows as a particular case of Lemma \ref{C}. There we show that all henselian valued fields with a fixed residue field $k$ and discrete value group $G$ have $C(k)$, the Cohen ring over $k$, as a common valued subfield (for an introduction to Cohen rings see \cite{cohen46}, and for a recent model-theoretic analysis see \cite{AJ19cohen}). In particular, we note that if $K$ has finite ramification $e$, then $K$ contains a copy of a finite extension of $C(k)$, that is the totally ramified extension of $C(k)$ of ramification $e$. This is obtained by adding to $C(k)$ a root of an Eisenstein polynomial. We then use this property to generalize a result due to Ax and Kochen in \cite{AxK65}, which gives a characterization of $\omega$-pseudo complete valued fields elementarily equivalent to the field of $p$-adic numbers $\mathbb{Q}_{p}$. In particular, in Theorems 1 and 2 of \cite{AxK65}, assuming the Continuum Hypothesis, the authors give a concrete description of $\omega$-pseudo complete formally $p$-adic fields with a fixed value group $G$ of cardinality $\aleph_{1}$, using a Hahn-like construction over $\mathbb{Q}_{p}$ (see also \cite{Kochen74}). We will refer to this structure as the field of ``twisted" power series since it is characterized by having the multiplication defined by adding an extra factor that is a power of $p$, and whose exponent is given by a $2$-cocycle defined through the value group. The addition is defined coefficientwise as the usual addition of power series. If $G$ is a $\mathbb{Z}$-group, this Hahn-like construction over $\mathbb{Q}_{p}$ gives a valued field of mixed characteristic that is a formally $p$-adic field. In Section \ref{twisted}, we extend the construction to a more general setting, 
i.e. we define the field of ``twisted" power series over any mixed characteristic valued field which has a cross-section (recall that a cross-section for a valuation $v: K\longrightarrow G$ is a group homomorphism $\sigma: G\longrightarrow K^{\times}$ such that $v(\sigma(g))=g$ for all $g\in G^{\times}$). 

The problem of characterizing valued fields in terms of power series has been addressed by many mathematicians, in both the equicharacteristic case (by Ax and Kochen in \cite{AxK65i} for the equicharacteristic $0$ case, and by Kaplansky, Schilling, and Teichm\"uller in \cite{Kaplansky42}, \cite{Schilling37}, and \cite{Teich37} respectively, for the equicharacteristic $p$ case), and the $p$-adic case (by MacLane in \cite{Maclane38}, \cite{Maclane39}, and by Poonen in \cite{Poonen93}, other than by Ax and Kochen in the aforementioned paper). In particular, in the mixed characteristic case, the motivations behind the ``twisted" power series construction can be understood in the following argument. Suppose $v$ is a mixed characteristic valuation from a field $K$ to an ordered group $G$, and let $A$ be the smallest convex subgroup of $G$. In that case, one can apply the coarsening method in order to decompose $v$ into a valuation $v_{0}$ with values in $A$, and another valuation $\dot{v}$ with values in the quotient group $G/A$, which is ordered by the induced order of $G$. The existence of a cross-section $\sigma$ for the valuation $v$ does not imply that $\dot{v}$ has a cross-section, since the exact sequence \begin{equation}\label{exactsequence} 0\xrightarrow{}A\xrightarrow{i} G\xrightarrow{\rho} G/A\xrightarrow{} 0\end{equation} of the value groups does not always split. Indeed, if it splits, i.e. if there exists a homomorphism of ordered abelian groups $\alpha: G/A\longrightarrow G$ such that $\rho\circ\alpha=id_{G/A}$, then $\sigma\circ\alpha$ is a cross-section for $\dot{v}$. Furthermore, in this case the coarsening method can be inverted, in the following sense. Starting from a splitting exact sequence of ordered abelian groups as in (\ref{exactsequence}), i.e. a mixed characteristic valued field with values in $A$, and an equicharacteristic $0$ valued field with values in $G/A$, one can compose the valuations and obtain a mixed characteristic valued field with a finer valuation which takes values in the group $G$. Some examples of mixed characteristic valued fields obtained by inverting the coarsening method are $(\mathbb{Q}_{p}((t^{\mathbb{Z}})),val)$, $(\mathbb{Q}_{p}((t_{1}^{\mathbb{Z}}))\ldots((t_{n}^{\mathbb{Z}})),val_{n})$, and $(\bigcup_{n\in\mathbb{N}}\mathbb{Q}_{p}((t_{1}^{\mathbb{Z}}))\ldots((t_{n}^{\mathbb{Z}})),val_{\infty})$, which were studied by the author in \cite{ADM1}, and whose valuations are obtained by the composition of the $p$-adic and the $t$-adics valuations. Then, it is very natural to ask whether one can ``reconstruct'' a group $G$ from a fixed exact sequence of ordered abelian groups $$0\xrightarrow{}A\xrightarrow{i} G\xrightarrow{\rho} H\xrightarrow{} 0$$ that does not split, and define a mixed characteristic valued field with value group $G$. With their characterization, Ax and Kochen show that this is possible if one considers formally $p$-adic fields and an exact sequence of ordered groups as above, where $G$ is a $\mathbb{Z}$-group. This paper aims at showing an analogous characterization for any $\omega$-pseudo complete mixed characteristic henselian valued field with finite ramification, fixed residue field $k$, and value group $G$ having $\mathbb{Z}$ as the minimal convex subgroup (see Theorem \ref{mainthm} and Corollary \ref{finextscharacterization}).

\section{Exact sequences of ordered abelian groups}
We start by recalling some results on exact sequences of groups. This is done in general for modules (see e.g. \cite{Robinson96}), here we review it for abelian groups equipped with a total order.

Let $G$ be an ordered abelian group and $A$ a convex subgroup of $G$. Let $H=G/A$ be the abelian group with the order induced by the order of $G$, and consider the exact sequence   $$0\xrightarrow{}A\xrightarrow{i} G\xrightarrow{\rho} H\xrightarrow{} 0.$$ For each $h\in H$, choose $\alpha(h)\in G$ any preimage of $h$ with respect to the projection map. We call $\alpha:H\longrightarrow G$ a \textit{transversal map}. By definition of the quotient order on $H$, any choice of $\alpha$ would be order-preserving. Then $\rho\alpha=id_{H}$, and $\alpha$ does not need to be a group homomorphism. Then define the following function $\beta: H\times H\longrightarrow G$ by 
\begin{equation}\label{betadefn}\beta(h,h')=\alpha(h+h')-\alpha(h)-\alpha(h'),\end{equation}
for all $h,h'\in H$.
\begin{prop} For every $h,h',h''\in H$, $\beta$ satisfies the following property
\begin{equation}\label{betarule}\beta(h,h')+\beta(h+h',h'')=\beta(h',h'')+\beta(h,h'+h'').\end{equation}
\end{prop}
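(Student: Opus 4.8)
The plan is to verify the identity \eqref{betarule} by a direct computation, unwinding the definition \eqref{betadefn} on both sides and checking that the two expressions collapse to the same element of $G$. Since $G$ is an abelian group, addition is associative and commutative, so I am free to rearrange terms and cancel $\alpha$-values freely; no order-theoretic or quotient-specific input is needed here, only the group structure of $G$ and the defining formula for $\beta$.

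Concretely, I would first expand the left-hand side:
\[
\beta(h,h')+\beta(h+h',h'')=\bigl(\alpha(h+h')-\alpha(h)-\alpha(h')\bigr)+\bigl(\alpha(h+h'+h'')-\alpha(h+h')-\alpha(h'')\bigr),
\]
and observe that the two occurrences of $\alpha(h+h')$ cancel, leaving
\[
\beta(h,h')+\beta(h+h',h'')=\alpha(h+h'+h'')-\alpha(h)-\alpha(h')-\alpha(h'').
\]
Then I would expand the right-hand side in the same way,
\[
\beta(h',h'')+\beta(h,h'+h'')=\bigl(\alpha(h'+h'')-\alpha(h')-\alpha(h'')\bigr)+\bigl(\alpha(h+h'+h'')-\alpha(h)-\alpha(h'+h'')\bigr),
\]
and note that the two occurrences of $\alpha(h'+h'')$ cancel, giving again
\[
\beta(h',h'')+\beta(h,h'+h'')=\alpha(h+h'+h'')-\alpha(h)-\alpha(h')-\alpha(h'').
\]
Comparing the two reduced expressions yields \eqref{betarule}.

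There is no genuine obstacle in this argument: it is a routine bookkeeping computation, and the only thing to be careful about is tracking the signs and making sure the cancellations are applied symmetrically on both sides. I would present it as a short displayed calculation rather than dwelling on it, since the content of the proposition is simply that $\beta$ is a $2$-cocycle for the trivial action, which is automatic for any transversal of a group extension.
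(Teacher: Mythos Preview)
Your proof is correct and is essentially the same computation as the paper's: the paper starts from the associativity identity $(\alpha(h)+\alpha(h'))+\alpha(h'')=\alpha(h)+(\alpha(h')+\alpha(h''))$ and substitutes $\alpha(h)+\alpha(h')=\alpha(h+h')-\beta(h,h')$ twice on each side, whereas you expand both sides of \eqref{betarule} directly and cancel to the common expression $\alpha(h+h'+h'')-\alpha(h)-\alpha(h')-\alpha(h'')$. These are the same argument presented in opposite directions.
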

\begin{proof} By the associative law of $G$,  for every $h,h',h''\in H$, $$(\alpha(h)+\alpha(h'))+\alpha(h'')=\alpha(h)+(\alpha(h')+\alpha(h'')).$$
From (\ref{betadefn}), the following holds $$\alpha(h)+\alpha(h')=\alpha(h+h')-\beta(h,h'),$$ and so we have $$(\alpha(h+h')-\beta(h,h'))+\alpha(h'')=\alpha(h)+(\alpha(h'+h'')-\beta(h',h'')).$$ Repeating the substitution in $$\alpha(h+h')+\alpha(h'')=\alpha(h+h'+h'')-\beta(h+h',h'')$$ and $$\alpha(h)+\alpha(h'+h'')=\alpha(h+h'+h'')-\beta(h,h'+h'')$$ we have the assert. \end{proof}
In the following remark, we prove that $\beta$ takes values in $A$.
\begin{rem} Let $$0\xrightarrow{}A\xrightarrow{i} G\xrightarrow{\rho} H\xrightarrow{} 0$$ be a short exact sequence of ordered abelian groups. If $h,h'\in H$, then $\beta(h,h')\in A$. \end{rem}
\begin{proof} If $h,h'\in H$, then \begin{align*}\rho(\beta(h,h'))&=\rho(\alpha(h)+\alpha(h')-\alpha(h+h'))=\\&=\rho(\alpha(h))+\rho(\alpha(h'))-\rho(\alpha(h+h'))=\\&=h+h'-(h+h')=0,\end{align*} since $\rho\alpha=id_{H}$ and $\rho$ is an epimorphism. Thus, $$\beta(h,h')\in ker\,\rho=Im\,i\cong A.$$ \end{proof}
\begin{defn} Let $n\geq1$ be any integer. An $n$-cochain is a map $\phi: H^{n}\longrightarrow G$ such that $\phi(h_{1},\ldots,h_{n})=0$ whenever $h_{i}=0$ for some $i$. \end{defn}
\begin{defn} \begin{itemize} \item[(i)] A $2$-cochain $\beta:H\times H\longrightarrow A$ satisfying (\ref{betarule}) is called a $2$-cocycle. We denote the set of $2$-cocycles from $H$ to $A$ by $$Z^{2}(H,A).$$ \item[(ii)] A $2$-cocycle $\beta$ is called a $2$-coboundary if there exists a map $\alpha:H\longrightarrow A$ such that $$\beta(h,h)=\alpha(h+h)-\alpha(h)-\alpha(h').$$ We denote the set of the $2$-coboundaries from $H$ to $A$ by $$B^{2}(H,A).$$\end{itemize}
\end{defn}
Notice that if $\beta$ is a $2$-coboundary associated to a transversal map $\alpha$, then $\alpha$ is a $1$-cochain from $H$ to $G$. Indeed, $$0=\beta(h,0)=\alpha(h)-\alpha(h)-\alpha(0)=-\alpha(0).$$ Moreover, for every $h,h'\in H$ $$\beta(h,h')=\beta(h',h).$$
\begin{rem} The set $Z^{2}(H,A)$ with the operation defined by $$(\phi+\psi)(h,h')=\phi(h,h')+\psi(h,h')$$ for all $\phi,\psi\in Z^{2}(H,A)$ is an abelian group and $B^{2}(H,A)$ is a subgroup with the restricted operation.
\end{rem}
\begin{defn} The quotient group $$H^{2}(H,A)=Z^{2}(H,A)/B^{2}(H,A)$$ is the \textit{second cohomology group of H with coefficients in A}. \end{defn}
\begin{fact} \label{relationbetas} Let $\alpha, \alpha'$ be two transversal maps from $H$ to $G$ and $\beta, \beta'$ the $2$-cocycles such that \begin{align*}\beta(h,h')&=\alpha(h+h')-\alpha(h)-\alpha(h')\\\beta'(h,h')&=\alpha'(h+h')-\alpha'(h)-\alpha'(h')\end{align*} for every $h,h'\in H$. Then $\beta$ and $\beta'$ belong to the same coset modulo $B^{2}(H,A)$.\end{fact}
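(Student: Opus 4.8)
The plan is to exhibit directly the $1$-cochain witnessing that $\beta-\beta'$ is a $2$-coboundary; the obvious candidate is the difference $\delta:=\alpha-\alpha':H\longrightarrow G$, given by $\delta(h)=\alpha(h)-\alpha'(h)$ for $h\in H$.

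The only step I would regard as an obstacle — though a mild one — is checking that $\delta$ is genuinely a $1$-cochain from $H$ to $A$. For the codomain: since $\rho\alpha=\rho\alpha'=id_{H}$, we get $\rho(\delta(h))=\rho(\alpha(h))-\rho(\alpha'(h))=h-h=0$, so $\delta(h)\in ker\,\rho=Im\,i$, which we identify with $A$ exactly as in the Remark showing that $\beta$ takes values in $A$; this is precisely where exactness of the sequence is used. For vanishing at $0$: since $\beta$ and $\beta'$ are $2$-cochains, the computation of the observation preceding the Fact gives $\alpha(0)=\alpha'(0)=0$, hence $\delta(0)=0$.

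With $\delta$ in hand the conclusion is forced by the definition (\ref{betadefn}) of the cocycle associated to a transversal map: for all $h,h'\in H$,
\[
(\beta-\beta')(h,h')=\bigl(\alpha(h+h')-\alpha(h)-\alpha(h')\bigr)-\bigl(\alpha'(h+h')-\alpha'(h)-\alpha'(h')\bigr)=\delta(h+h')-\delta(h)-\delta(h'),
\]
which is exactly the relation making $\beta-\beta'$ the $2$-coboundary attached to $\delta$. Therefore $\beta-\beta'\in B^{2}(H,A)$, i.e. $\beta$ and $\beta'$ lie in the same coset modulo $B^{2}(H,A)$, as claimed.
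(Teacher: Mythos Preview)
Your proof is correct and follows essentially the same route as the paper: the paper also defines $\psi(h)=\alpha(h)-\alpha'(h)$, checks via $\rho$ that it lands in $A$, and then substitutes to obtain $\beta'=\beta-\psi^{*}$ with $\psi^{*}(h,h')=\psi(h+h')-\psi(h)-\psi(h')$. Your version is slightly more explicit in verifying that $\delta(0)=0$, but otherwise the two arguments are the same.
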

\begin{proof} Note that for every $h\in H$, $\rho(\alpha(h)-\alpha'(h))=h-h=0$, thus $\alpha(h)-\alpha'(h)$ is in $ker\rho=Im\,i\cong A$. So, there exist a map $\psi:H\longrightarrow A$, such that for every $h\in H$, $\alpha(h)-\alpha'(h)=\psi(h)$. Substituting in the definition of $\beta'$ we have \begin{align*}\beta'(h,h')&=\alpha'(h+h')-\alpha'(h)-\alpha'(h')=\\&=\alpha(h+h')-\psi(h+h')-(\alpha(h)-\psi(h))-(\alpha(h')-\psi(h'))=\\&=\beta(h,h')-(\psi(h+h')-\psi(h)-\psi(h'))=\\&=\beta(h,h')-\psi^{*}(h,h')\end{align*} where $\psi^{*}:H\times H\longrightarrow A$ is a $2$-coboundary defined by the $1$-cochain $\psi$ from $H$ to $A$.\end{proof}

\begin{lem}\label{lemmagroup} If $\beta: H\times H\longrightarrow A$ is a $2$-cocycle, the product $A\times H$ can be equipped with an ordered abelian group structure where the group operation + is defined by $$(z,h)+(z',h')=(z+z'-\beta(h,h'),h+h')$$ and the order is the lexicographic order $$(z,h)\leq(z',h')\iff h\leq h'\, or\,h=h'\, and\,z\leq z'.$$ \end{lem}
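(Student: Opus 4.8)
The plan is to check directly that $(A\times H,+,\le)$ satisfies the axioms of an ordered abelian group, dealing with the group structure and the compatibility of the order in turn. First I would note that $+$ is a well-defined operation on $A\times H$, since $A$ is a group and $\beta$ takes values in $A$. The element $(0,0)$ is the identity: because $\beta$ is a $2$-cochain we have $\beta(h,0)=\beta(0,h)=0$, so $(z,h)+(0,0)=(z,h)=(0,0)+(z,h)$. The inverse of $(z,h)$ is $(-z+\beta(h,-h),-h)$, which a one-line computation confirms, again using that $\beta$ vanishes whenever one of its arguments is $0$. Commutativity is immediate from the symmetry $\beta(h,h')=\beta(h',h)$ recorded above.

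The heart of the argument is associativity, and this is precisely where the cocycle identity (\ref{betarule}) is used — indeed, this is the property that motivates the definition of a $2$-cocycle. Expanding,
\[((z,h)+(z',h'))+(z'',h'')=\bigl(z+z'+z''-\beta(h,h')-\beta(h+h',h''),\ h+h'+h''\bigr)\]
and
\[(z,h)+((z',h')+(z'',h''))=\bigl(z+z'+z''-\beta(h',h'')-\beta(h,h'+h''),\ h+h'+h''\bigr).\]
The second coordinates coincide trivially, and the first coordinates coincide precisely because $\beta(h,h')+\beta(h+h',h'')=\beta(h',h'')+\beta(h,h'+h'')$, which is (\ref{betarule}). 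So once the trivial checks are done, the group axioms are exactly equivalent to the cocycle condition.

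It remains to treat the order. The lexicographic order on the product of the two totally ordered sets $A$ and $H$ is a total order, which is routine to verify. For compatibility with $+$, suppose $(z,h)\le(z',h')$ and add $(z'',h'')$ on the right. If $h<h'$, then $h+h''<h'+h''$ in $H$, so the two sums already compare in the correct way on their second coordinates. If $h=h'$ and $z\le z'$, the second coordinates of the two sums agree and the first coordinates are $z+z''-\beta(h,h'')$ and $z'+z''-\beta(h,h'')$, which compare in the same direction as $z$ and $z'$. Hence translation preserves the order, and $(A\times H,+,\le)$ is an ordered abelian group.

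I expect essentially all of the genuine content to lie in the identification of associativity with (\ref{betarule}); the identity, inverse, commutativity, and order-compatibility checks are short and formal. The one spot calling for a little care is verifying that the candidate inverse is two-sided, which uses both the normalization of $\beta$ as a $2$-cochain and its symmetry.
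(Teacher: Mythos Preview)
Your proof is correct and matches the paper's approach almost exactly: both verify the group axioms directly, with associativity reducing to the cocycle identity (\ref{betarule}), identity and inverse handled via $\beta(h,0)=\beta(0,h)=0$, and commutativity via the symmetry of $\beta$. The only cosmetic difference is that the paper phrases order-compatibility as ``$(z,h)>0$ implies $(z,h)+(z',h')>(z',h')$'' rather than your translation-invariance formulation, but these are equivalent and the computations are the same.
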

\begin{proof}We check the axioms of ordered abelian groups. Take $(z,h),(z',h')$ and $(z'',h'')$ in $A\times H$. Then
\begin{itemize}
\item \textit{Associavity.} 
\begin{align*}&((z,h)+(z',h'))+(z'',h'')=(z+z'-\beta(h,h'),h+h')+(z'',h'')=\\&=(z+z'+z''-(\beta(h,h')+\beta(h+h',h'')),h+h'+h'')=\\&=(z+z'+z''-(\beta(h',h'')+\beta(h,h'+h'')),h+h'+h'')=\\&=(z,h)+(z'+z''-\beta(h',h''),h'+h'')=(z,h)+((z',h')+(z'',h'')),\end{align*} where the forth equality holds because of (\ref{betarule}).
\item \textit{Commutativity.}
\begin{align*}&(z,h)+(z',h')=(z+z'-\beta(h,h'),h+h')=\\&=(z'+z-\beta(h',h),h'+h)=(z',h')+(z,h).\end{align*}
\item \textit{Identity.} Since $\beta(h,0)=\beta(0,h')=0$ for every $h,h'\in H$, the pair $(0,0)$ satisfies $$(z,h)+(0,0)=(z-\beta(h,0),h)=(z,h)=(z-\beta(0,h),h)=(0,0)+(z,h).$$
\item \textit{Inverse.} $$(z,h)+(-z+\beta(h,-h),-h)=(z-z+\beta(h,-h)-\beta(h,-h),h-h)=(0,0).$$
\item \textit{Compatibility with the order.} Let $(z,h)>0$. Then $(z,h)+(z',h')=(z+z'-\beta(h,h'),h+h')>(z',h')$ for every $(z',h')\in A\times H$. Indeed, either $h+h'>h'$, if $h>0$, or if $h=0$ and $z>0$, then $\beta(h,h')=\beta(0,h')=0$ and $z+z'>z'$.
\end{itemize}
\end{proof}

We denote the group structure defined on $A\times H$ by $G_{\beta}$. Thus, the sequence $$0\xrightarrow{}A\xrightarrow{i} G_{\beta}\xrightarrow{\rho} H\xrightarrow{} 0$$ is a short exact sequence of ordered abelian groups. Moreover, the following fact shows that every exact sequence of ordered abelian groups is associated to a unique equivalence class $\beta+B^{2}(H, A)$ of the cohomology group for some $\beta\in H^{2}(H,A)$.

\begin{fact} \label{isom} Let $$0\xrightarrow{}A\xrightarrow{i} G\xrightarrow{\rho} H\xrightarrow{} 0$$ be a short exact sequence of ordered abelian groups and $\alpha:H\longrightarrow G$ a transversal map defining a $2$-cocycle $\beta$. Then, $G\cong G_{\beta}$. Moreover, $G\cong G_{\beta'}$ for every $\beta'$ defined by any other transversal map $\alpha'$.
\end{fact}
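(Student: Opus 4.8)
The plan is to construct an explicit order isomorphism $\Phi\colon G_{\beta}\longrightarrow G$. Identifying $A$ with the convex subgroup $i(A)$ of $G$ (so that $\beta(h,h')=\alpha(h+h')-\alpha(h)-\alpha(h')$ holds literally in $G$ and $\alpha(0)=0$, this last equality being forced by the requirement that the $2$-cocycle $\beta$ be a $2$-cochain), define $\Phi(z,h)=z+\alpha(h)$ for $(z,h)\in A\times H=G_{\beta}$. Then $\Phi$ is a group homomorphism: using the definition of the operation on $G_{\beta}$ from Lemma \ref{lemmagroup} and the relation $\alpha(h+h')-\beta(h,h')=\alpha(h)+\alpha(h')$,
$$\Phi\big((z,h)+(z',h')\big)=\big(z+z'-\beta(h,h')\big)+\alpha(h+h')=z+\alpha(h)+z'+\alpha(h')=\Phi(z,h)+\Phi(z',h'),$$
and $\Phi(0,0)=\alpha(0)=0$.

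Next I would verify that $\Phi$ is bijective. For injectivity, suppose $z+\alpha(h)=0$ in $G$; applying $\rho$ and using $\rho(z)=0$ (as $z\in A=\ker\rho$) gives $h=\rho(\alpha(h))=0$, hence $\alpha(h)=\alpha(0)=0$ and therefore $z=0$; thus $\ker\Phi=\{(0,0)\}$. For surjectivity, given $g\in G$ set $h=\rho(g)$; then $\rho(g-\alpha(h))=h-h=0$, so $g-\alpha(h)\in\ker\rho=A$, and writing $z=g-\alpha(h)$ we obtain $\Phi(z,h)=g$.

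It remains to check that $\Phi$ is order-preserving, which is the only point requiring some care, since it uses the way the order on $H=G/A$ is induced from that of $G$ together with the convexity of $A$. As $\Phi$ is already a group isomorphism, it suffices to show that, for $(z,h)\in G_{\beta}$, one has $\Phi(z,h)\geq 0$ in $G$ if and only if $(z,h)\geq 0$ in the lexicographic order. If $h>0$ then $\rho(\Phi(z,h))=h>0$, and convexity of $A$ forces $\Phi(z,h)>0$ in $G$; if $h=0$ then $\Phi(z,0)=z$, which is $\geq 0$ in $G$ exactly when $z\geq 0$ in $A$. Conversely, if $\Phi(z,h)\geq 0$ then $h=\rho(\Phi(z,h))\geq 0$ since $\rho$ is order-preserving; if $h>0$ then $(z,h)>0$, while if $h=0$ then $\Phi(z,0)=z\geq 0$ gives $(z,0)\geq 0$. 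This establishes $G\cong G_{\beta}$. The final assertion is then immediate: the argument above used only that $\alpha$ is a transversal map with associated $2$-cocycle $\beta$, so applying it verbatim to any other transversal map $\alpha'$ and its cocycle $\beta'$ yields $G\cong G_{\beta'}$; in particular $G_{\beta}\cong G_{\beta'}$ whenever $\beta$ and $\beta'$ are cohomologous, consistently with Fact \ref{relationbetas}.
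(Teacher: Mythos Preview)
Your proof is correct and follows essentially the same route as the paper: the map $\Phi(z,h)=z+\alpha(h)$ is exactly the paper's $f$, and your checks of the homomorphism, injectivity and surjectivity properties are identical to those given there. The only minor differences are stylistic: you verify order-preservation via the positive cone (using that $\rho(g)>0$ forces $g>0$ by convexity of $A$) whereas the paper compares two arbitrary elements directly; and for the ``Moreover'' clause you simply rerun the first part with $\alpha'$ to get $G\cong G_{\beta'}$, while the paper instead invokes Fact~\ref{relationbetas} to build an explicit isomorphism $G_{\beta}\to G_{\beta'}$, $(z,h)\mapsto (z+\psi(h),h)$. Your shortcut is perfectly adequate for the stated Fact, though the paper's explicit map has the small advantage of displaying concretely how cohomologous cocycles yield isomorphic groups.
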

\begin{proof}
Consider the map \begin{align*}
f: & \,\,\,\,G_{\beta} \,\,\,\longrightarrow \,\,\,G \\
   & \,(z,h) \,\,\mapsto z+\alpha(h)
\end{align*}
Then
\begin{itemize}
\item $f$ is a homomorphism. Indeed,
\begin{align*} f\left((z,h)+(z',h')\right)&=f(z+z'-\beta(h,h'),h+h')= \\
&=z+z'-\beta(h,h')+\alpha(h+h')=\\ 
&=z+z'+\alpha(h)+\alpha(h')= \\
&=z+\alpha(h)+z'+\alpha(h')=f(z,h)+f(z',h'). \end{align*}
\item $f$ is injective. Let $(z,h)\in A\times H$ be such that $z+\alpha(h)=0$. Then $\rho(z+\alpha(h))=\rho(z)+\rho(\alpha(h))=\rho(z)+h=0$. Thus $h=0$, since $z\in A=Im\,i=ker\rho$. From $z+\alpha(h)=0$ and $\alpha(0)=0$, it follows $z=0$.
\item $f$ is surjective. Take $g\in G$. We want $(z,h)\in A\times H$ such that $g=z+\alpha(h)$. Consider the element $g-\alpha(\rho(g))\in G$. Note that $\rho(g-\alpha(\rho(g)))=\rho(g)-\rho(g)=0$. Since $ker\rho=Im\,i=A$, then there is $z\in A$ such that $g-\alpha(\rho(g))=z$ and so $g=z+\alpha(h)$, with $h=\rho(g)$.
\item $f$ is order-preserving. Take $(z,h)\leq(z',h')$. Suppose first that $h<h'$, then $z+\alpha(h)<z'+\alpha(h')$. Indeed, suppose by contradiction that $z'+\alpha(h')\leq z+\alpha(h)$. Since $\alpha$ is order-preserving, then $0<\alpha(h')-\alpha(h)\leq z-z'$, and since $A$ is convex in $G$, then $\alpha(h')-\alpha(h)\in A$. It follows that $\rho(\alpha(h')-\alpha(h))=h'-h=0$ that contradicts $h<h'$.
Now suppose $h=h'$ and $z<z'$. Clearly $\alpha(h)=\alpha(h')$ and $z+\alpha(h)<z'+\alpha(h')$.
\end{itemize}
Now, let $\beta'$ be a $2$-cocycle defined through a different transversal map $\alpha':H\longrightarrow G$. By Fact \ref{relationbetas}, $\beta'=\beta-\psi^{*}$ where $\psi^{*}$ is a $2$-coboundary defined from a $1$-cochain $\psi:H\longrightarrow A$. Then the map \begin{align*}
g: & \,\,\,\,G_{\beta} \,\,\,\longrightarrow \,\,\,G_{\beta'} \\
   & \,(z,h) \,\,\mapsto (z+\psi(h),h)
\end{align*}
is an isomorphism of ordered abelian groups. Indeed, for every $(z,h),(z',h')\in G_{\beta}$, the following properties hold:\begin{itemize}
\item $g$ is a homomorphism. Indeed, \begin{align*} g((z,h)+(z',h'))&=g(z+z'-\beta(h,h'),h+h')=\\&=(z+z'-\beta(h,h')+\psi(h+h'),h+h')=\\&=(z+z'-\beta'(h,h')+\psi(h)+\psi(h'),h+h')=\\&=(z+\psi(h),h)+(z'+\psi(h'),h')=\\&=g(z,h)+g(z',h')\end{align*}
\item Let $(z,h)\in G_{\beta}$ such that $(z+\psi(h),h)=(0,0)$. Then $h=0$, and since $\psi$ is a $1$-cochain, $z=0$. Thus $g$ is injective.
\item Let $(\dot{z},\dot{h})\in G_{\beta'}$, then $(\dot{z}-\psi(\dot{h}),\dot{h})\in G_{\beta}$ is such that $g(\dot{z}-\psi(\dot{h}),\dot{h})=(\dot{z},\dot{h})$, which proves surjectivity of $g$.
\item Let $(z,h)\leq (z',h')$. If $h<h'$, then clearly $g(z,h)<g(z',h')$. Suppose $h=h'$ and $z\leq z'$, then $\psi(h)=\psi(h')$ and $(z+\psi(h),h)\leq(z'+\psi(h'),h')$, thus $g$ is order preserving.
\end{itemize}
\end{proof}


\section{An Hahn-like construction for mixed characteristic valued fields} \label{twisted}
In this section, we show an application of the above results in the context of valued fields. Indeed, we use the $2$-cocycle to generalize the Hahn-like construction described by Ax and Kochen (see \cite{AxK65} and \cite{Kochen74}) to finitely ramified valued fields with a cross-section. Thus, consider an exact sequence of ordered abelian groups $$0\xrightarrow{}A\xrightarrow{i} G\xrightarrow{\rho} H\xrightarrow{} 0$$ with $A$ the smallest convex subgroup of $G$ and $H=G/A$. Let $(K,v)$ be a mixed characteristic $(0,p)$ valued field valued in $A$, and such that $\sigma: A\longrightarrow K^{\times}$ is a cross-section.
Now, let $\delta$ be an infinite cardinal and consider the set $$K(t^{H},\beta)_{\delta}=\left\{\sum_{h\in S}a_{h}t^{h}\,:\, S\subseteq_{\text{w.o.}}H,\, |S|\leq\delta,\, a_{h}\in K\, \text{for  all}\,h\in S\right\}$$ where the support $S$ of an element $a=\sum_{h\in S}a_{h}t^{h}$ is a well ordered (w.o.) subset of $H$. The set $K(t^{H},\beta)_{\delta}$ can be equipped with the operations of addition and multiplication defined as follows: for all $a=\sum_{h\in S}a_{h}t^{h}$ and $b=\sum_{h\in S'}b_{h}t^{h}$ in $K(t^{H},\beta)_{\delta}$,
$$a_{h}t^{h}+b_{h}t^{h}=(a_{h}+b_{h})t^{h}$$ and $$a_{h}t^{h}b_{h'}t^{h'}=a_{h}b_{h'}\sigma(-\beta(h,h'))t^{h+h'}.$$
Note that if $A\cong\mathbb{Z}$ and $\pi$ is a uniformizer (i.e. $v(\pi)=1$), then the map $\sigma: A\longrightarrow K^{\times}$ defined as $\sigma(m)=\pi^{m}$, is a cross-section for the valuation $v$. Then $\pi^{\beta(h,h')}\in K$ for all $h,h'\in H$ and the multiplication in the above definition is well defined. 

\begin{prop}\label{field} If $\beta:H\times H\longrightarrow A$ is a 2-cocycle, the structure $$(K(t^{H},\beta)_{\delta},+,\cdot)$$ defined above is a field.\end{prop}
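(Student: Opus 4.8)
The plan is to verify that $(K(t^{H},\beta)_{\delta},+,\cdot)$ is a commutative ring with unity, and then that every nonzero element has a multiplicative inverse. Everything rests on one preliminary point: that the two operations are well defined, i.e. that the support of a sum or of a product is again a well-ordered subset of $H$ of cardinality $\leq\delta$, and that each coefficient of a product is a finite sum in $K$. For $+$ this is trivial, since $\mathrm{supp}(a+b)\subseteq\mathrm{supp}(a)\cup\mathrm{supp}(b)$. For $\cdot$ one invokes the classical lemma of Neumann on ordered abelian groups: if $S,S'\subseteq H$ are well ordered, then $S+S'=\{h_{1}+h_{2}:h_{1}\in S,\,h_{2}\in S'\}$ is well ordered and for every $h\in H$ the fiber $\{(h_{1},h_{2})\in S\times S':h_{1}+h_{2}=h\}$ is finite. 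Since the exponent of a product of two monomials $a_{h}t^{h}\cdot b_{h'}t^{h'}$ is just $h+h'$ (the $2$-cocycle only twists the \emph{coefficient}, by the factor $\sigma(-\beta(h,h'))\in K$), this shows that for $a=\sum_{h\in S}a_{h}t^{h}$ and $b=\sum_{h'\in S'}b_{h'}t^{h'}$ the expression defining $ab$ collects only finitely many terms in each exponent, has support inside the well-ordered set $S+S'$, and has cardinality $\leq|S|\cdot|S'|\leq\delta$; hence $ab\in K(t^{H},\beta)_{\delta}$.

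Granting well-definedness, I would run through the ring axioms. Addition is coefficientwise, so $(K(t^{H},\beta)_{\delta},+)$ is an abelian group with neutral element $0$, and distributivity is immediate. Commutativity of $\cdot$ holds because $K$ is commutative and $\beta$ is symmetric (writing $\beta(h,h')=\alpha(h+h')-\alpha(h)-\alpha(h')$ for a transversal map $\alpha$, this is symmetric since the ambient group is abelian). The element $1\cdot t^{0}$, with $0=0_{H}$, is the unity, because $\beta(h,0)=\beta(0,h)=0$ forces $\sigma(-\beta(h,0))=\sigma(0)=1$. The only non-mechanical axiom is associativity of $\cdot$, and this is precisely where the cocycle condition is used. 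Since in each exponent $(ab)c$ and $a(bc)$ are finite sums (two applications of Neumann's lemma) of triple monomial products, it is enough to check associativity on monomials $a_{h}t^{h}$, $b_{h'}t^{h'}$, $c_{h''}t^{h''}$; using that $\sigma:A\longrightarrow K^{\times}$ is a group homomorphism, the left bracketing yields
$$a_{h}b_{h'}c_{h''}\,\sigma\bigl(-\beta(h,h')-\beta(h+h',h'')\bigr)\,t^{\,h+h'+h''}$$
and the right bracketing yields
$$a_{h}b_{h'}c_{h''}\,\sigma\bigl(-\beta(h',h'')-\beta(h,h'+h'')\bigr)\,t^{\,h+h'+h''},$$
which agree by the cocycle identity (\ref{betarule}).

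Finally I would produce inverses. A single monomial $ct^{g}$ with $c\in K^{\times}$ is a unit, with inverse $c^{-1}\sigma(\beta(g,-g))\,t^{-g}$ (the product equals $\sigma(\beta(g,-g))\sigma(-\beta(g,-g))t^{0}=1$). Given a nonzero $a=\sum_{h\in S}a_{h}t^{h}$, set $h_{0}=\min S$; since $K$ is a field, $a_{h_{0}}t^{h_{0}}$ is a monomial unit, and multiplying $a$ by its inverse reduces us to $a=1+\varepsilon$ with $\varepsilon$ having well-ordered support $S_{\varepsilon}\subseteq H_{>0}$ of cardinality $\leq\delta$, where $H_{>0}=\{h\in H:h>0\}$. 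By Neumann's lemma applied to $S_{\varepsilon}$, the set $T=\bigcup_{n\geq0}\{h_{1}+\cdots+h_{n}:h_{i}\in S_{\varepsilon}\}$ is well ordered, $|T|\leq\aleph_{0}\cdot\delta=\delta$, and for each $h\in H$ only finitely many tuples $(n;h_{1},\dots,h_{n})$ with $h_{i}\in S_{\varepsilon}$ satisfy $h_{1}+\cdots+h_{n}=h$. Since every monomial product occurring in $(-\varepsilon)^{n}$ has exponent of the form $h_{1}+\cdots+h_{n}$, the series $b=\sum_{n\geq0}(-\varepsilon)^{n}$ is a well-defined element of $K(t^{H},\beta)_{\delta}$ with support in $T$, and a telescoping computation — legitimate by the same finiteness — gives $(1+\varepsilon)\,b=\sum_{n\geq0}\bigl((-\varepsilon)^{n}-(-\varepsilon)^{n+1}\bigr)=1$. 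Undoing the initial multiplication by a monomial unit and using commutativity yields an inverse of the original $a$, so $K(t^{H},\beta)_{\delta}$ is a field.

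The genuine difficulty here is not algebraic: associativity is a one-line consequence of $\beta\in Z^{2}(H,A)$ together with $\sigma$ being a homomorphism, and the geometric-series argument for inverses is classical. The main obstacle is instead the bookkeeping that guarantees every infinite sum appearing above — the coefficients of a product $ab$, the series $\sum_{n}(-\varepsilon)^{n}$, and the telescoping sum — actually lands in $K(t^{H},\beta)_{\delta}$; this is supplied in one stroke by the Neumann-type lemma on well-ordered subsets of ordered abelian groups, which delivers simultaneously the well-ordering of the relevant supports, the finiteness of the fibers of iterated addition, and the cardinality bound $\leq\delta$.
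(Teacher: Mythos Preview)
Your proof is correct and follows essentially the same route as the paper: associativity via the cocycle identity (\ref{betarule}) combined with $\sigma$ being a group homomorphism, commutativity via the symmetry of $\beta$, and inverses via reduction to $1+\varepsilon$ with $\mathrm{supp}(\varepsilon)\subseteq H_{>0}$ followed by the geometric-series argument backed by Neumann's lemma. The only notable difference is presentational---the paper expands the full triple-sum computation for associativity whereas you reduce to monomials, and you are more explicit than the paper about the well-definedness bookkeeping (well-ordering of $S+S'$, finiteness of fibers, and the cardinality bound $\leq\delta$)---but the underlying ideas are identical.
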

\begin{proof} The axioms of abelian groups hold in $(K(t^{H},\beta)_{\delta},+)$ as in the field of formal power series defined on $K$. We now check that the axioms involving multiplication also hold. Let $a=\sum_{h\in S}a_{h}t^{h},\,b=\sum_{h\in S'}b_{h}t^{h},\,c=\sum_{h\in S''}c_{h}t^{h}$ in $K(t^{H},\beta)_{\delta}$.
\begin{itemize} 
\item \textit{Associativity.}
\begin{align*}(a\cdot b)\cdot c&=\left(\sum_{h\in S}a_{h}t^{h}\cdot\sum_{h\in S'}b_{h}t^{h}\right)\cdot\sum_{h\in S''}c_{h}t^{h}= \\  &=\left[\sum_{l\in S+S'}\left(\sum_{\substack{h\in S \\ h'\in S' \\ h+h'=l}}a_{h}b_{h'}\sigma(-\beta(h,h'))\right)t^{l}\right]\cdot\sum_{h\in S''}c_{h}t^{h}=\\ &=\sum_{r\in S+S'+S''}\left(\sum_{\substack{l\in S+S' \\ h''\in S'' \\ l+h''=r}}\left(\sum_{\substack{h\in S \\ h'\in S' \\ h+h'=l}}a_{h}b_{h'}\sigma(-\beta(h,h'))\right)c_{h''}\sigma(-\beta(l,h''))\right)t^{r}= \\ &=\sum_{r\in S+S'+S''}\left(\sum_{\substack{h\in S \\ h'\in S' \\ h''\in S'' \\ h+h'+h''=r}}a_{h}b_{h'}c_{h''}\sigma(-(\beta(h,h')+\beta(h+h',h'')))\right)t^{r}
\end{align*}
Using (\ref{betarule}) in the last expression, we obtain that
\begin{align*}
(a\cdot b)\cdot c&=\sum_{r\in S+S'+S''}\left(\sum_{\substack{h\in S \\ h'\in S' \\ h''\in S'' \\ h+h'+h''=r}}a_{h}b_{h'}c_{h''}\sigma(-(\beta(h',h'')+\beta(h,h'+h'')))\right)t^{r}= \end{align*}
\begin{align*}
&=\sum_{r\in S+S'+S''}\left(\sum_{\substack{l\in S+S' \\ h\in S \\ l+h=r}}a_{h}\sigma(-\beta(h,l))\left(\sum_{\substack{h'\in S' \\ h''\in S'' \\ h'+h''=l}}b_{h'}c_{h''}\sigma(-\beta(h',h''))\right)\right)t^{r}= \\ &=\sum_{h\in S}a_{h}t^{h}\cdot\left[\sum_{l\in S'+S''}\left(\sum_{\substack{h'\in S \\ h''\in S' \\ h'+h''=l}}b_{h'}c_{h''}\sigma(-\beta(h',h''))\right)t^{l}\right]=\\ &=\sum_{h\in S}a_{h}t^{h}\cdot\left(\sum_{h\in S'}b_{h}t^{h}\cdot\sum_{h\in S''}c_{h}t^{h}\right)= a\cdot(b\cdot c).
\end{align*}
\item \textit{Commutativity.}
\begin{align*} a\cdot b&=\sum_{h\in S}a_{h}t^{h}\cdot\sum_{h\in S'}b_{h}t^{h}=\sum_{l\in S+S'}\left(\sum_{\substack{h\in S \\ h'\in S' \\ h+h'=l}}a_{h}b_{h'}\sigma(-\beta(h,h'))\right)t^{l}=\\&=\sum_{l\in S+S'}\left(\sum_{\substack{h\in S \\ h'\in S' \\ h+h'=l}}b_{h'}a_{h}\sigma(-\beta(h',h))\right)t^{l}= \\&=\sum_{h\in S'}b_{h}t^{h}\cdot\sum_{h\in S}a_{h}t^{h}=b\cdot a.
\end{align*}
\item \textit{Distributivity.} \begin{align*}a\cdot(b+c)&=\sum_{h\in S}a_{h}t^{h}\left(\sum_{h\in S'\cup S''}(b_{h}+c_{h})t^{h}\right)=\\&=\sum_{l\in S+(S'\cup S'')}\left(\sum_{\substack{h\in S \\h'\in S'\cup S''\\h+h'=l}}a_{h}(b_{h'}+c_{h'})\sigma(-\beta(h,h'))\right)t^{l}=\\&=\sum_{l\in S+(S'\cup S'')}\left(\sum_{\substack{h\in S \\h'\in S'\cup S''\\h+h'=l}}a_{h}b_{h'}\sigma(-\beta(h,h'))+a_{h}c_{h'}\sigma(-\beta(h,h'))\right)t^{l}=\end{align*}\begin{align*}&=\sum_{l\in S\cup S'}\left(\sum_{\substack{h\in S\\h'\in S'\\h+h'=l}}a_{h}b_{h'}\sigma(-\beta(h,h'))\right)t^{l}+\sum_{l\in S\cup S''}\left(\sum_{\substack{h\in S\\h''\in S''\\h+h''=l}}a_{h}c_{h''}\sigma(-\beta(h,h''))\right)t^{l}=\\&=\left(\sum_{h\in S}a_{h}t^{h}\cdot\sum_{h\in S'}b_{h}t^{h}\right)+\left(\sum_{h\in S}a_{h}t^{h}\cdot\sum_{h\in S''}c_{h}t^{h}\right)=\\&=a\cdot b+a\cdot c.\end{align*}
\item \textit{Unit.} Clearly $1\in K(t^{H},\beta)_{\delta}$ is the series such that $1f=f1=f$ for all $f\in K(t^{H},\beta)_{\delta}$.
\item \textit{Multiplicative inverse.} Let $b\in K(t^{H},\beta)_{\delta}$, $b\neq0$. The existence of the inverse element is given by the following arguments: \begin{enumerate}\item Let $a\in K(t^{H},\beta)$ be an element with strictly positive support (i.e. $min\,supp(a)>0$). One can prove, as in the usual power series case, that the sum $1+a+a^{2}+a^{3}+\ldots$ is an element of $K(t^{H},\beta)$ (see \cite{Neumann49}); \item Every element $b\in K(t^{H},\beta)$ can be written as $ct^{\tilde{h}}(1-a)$, where $c\in K$, $\tilde{h}\in H$, and $a\in K(t^{H},\beta)$ has strictly positive support.\\ \smallskip Indeed, let $b=\sum_{h\in S}b_{h}t^{h}$. Then $$b=ct^{\tilde{h}}(1-a),$$ where $\tilde{h}=min\,supp(b)$, $c=b_{\tilde{h}}$, and $a=\sum_{h\in S'}a_{h}t^{h}$ is such that $$a_{h}=-\frac{b_{h+\tilde{h}}}{c\sigma(-\beta(h,\tilde{h}))}.$$ Note that if $h<0$, then $h+\tilde{h}<\tilde{h}=min\,supp(b)$ and $b_{h+\tilde{h}}=0$. Thus, $a_{h}=0$ and $a$ has strictly positive support. \item Now, one can compute the multiplicative inverse of $b$ as follows $$b^{-1}=(ct^{\tilde{h}}(1-a))^{-1}=\frac{c^{-1}t^{-\tilde{h}}}{\sigma(-\beta(\tilde{h},-\tilde{h}))}(1+a+a^{2}+a^{3}+\ldots).$$\end{enumerate} 
\end{itemize} \end{proof}
We show now that $K(t^{H},\beta)_{\delta}$ can be equipped with a non-trivial valuation as follows.

\begin{prop} \label{valuation} The field $K(t^{H},\beta)_{\delta}$ can be equipped with a valuation defined as follows $$val: K(t^{H},\beta)_{\delta}\longrightarrow G_{\beta}\cup\{(\infty,\infty)\}$$ such that $$val(\sum_{h\in S}a_{h}t^{h})=(v(a_{h_{0}}),h_{0})$$ where $v$ is the mixed characteristic valuation over $K$, $h_{0}$ is the smallest element of S and $(\infty,\infty)$ is a symbol for an element greater than any element in $G_{\beta}$ for the valuation of $0$. 
\end{prop}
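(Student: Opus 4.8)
The plan is to verify the three defining properties of a valuation directly, using the group law on $G_{\beta}$ from Lemma \ref{lemmagroup} and the cross-section identity $v(\sigma(g))=g$ for $g\in A$. First I would dispose of well-definedness: for $a=\sum_{h\in S}a_{h}t^{h}\neq 0$ we may take $S=\{h\in H:a_{h}\neq 0\}$, a nonempty well-ordered subset of $H$, so it has a least element $h_{0}$; since $a_{h_{0}}\neq 0$ in the field $K$, the element $v(a_{h_{0}})\in A$ is defined and $val(a)=(v(a_{h_{0}}),h_{0})\in A\times H=G_{\beta}$. By convention $val(a)=(\infty,\infty)$ exactly when $a=0$, which gives the first axiom. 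For non-triviality I would note that $val$ is even onto $G_{\beta}$: given $(z,h)\in A\times H$, the monomial $\sigma(z)t^{h}$ lies in $K(t^{H},\beta)_{\delta}$ and $val(\sigma(z)t^{h})=(v(\sigma(z)),h)=(z,h)$; in particular $val(p)=(v(p),0)\neq(0,0)$ since $v(p)>0$.

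Next, multiplicativity. Let $a=\sum_{h\in S}a_{h}t^{h}$ and $b=\sum_{h'\in S'}b_{h'}t^{h'}$ be nonzero with $h_{0}=\min S$, $h_{0}'=\min S'$. The crucial observation is that $(h_{0},h_{0}')$ is the \emph{unique} pair with $h\in S$, $h'\in S'$, $h+h'=h_{0}+h_{0}'$: from $h-h_{0}\geq 0$, $h'-h_{0}'\geq 0$ and $(h-h_{0})+(h'-h_{0}')=0$ one gets $h=h_{0}$, $h'=h_{0}'$. Hence, reading off the product formula from Proposition \ref{field}, the coefficient of $t^{h_{0}+h_{0}'}$ in $ab$ is $a_{h_{0}}b_{h_{0}'}\sigma(-\beta(h_{0},h_{0}'))$, which is nonzero because $a_{h_{0}},b_{h_{0}'}\in K^{\times}$ and $\sigma(-\beta(h_{0},h_{0}'))\in K^{\times}$; so $h_{0}+h_{0}'=\min\operatorname{supp}(ab)$. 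Using $v\circ\sigma=\mathrm{id}_{A}$ and the fact that $v$ is a valuation on $K$, one computes
\[
val(ab)=\bigl(v(a_{h_{0}})+v(b_{h_{0}'})-\beta(h_{0},h_{0}'),\,h_{0}+h_{0}'\bigr),
\]
which is precisely $(v(a_{h_{0}}),h_{0})+(v(b_{h_{0}'}),h_{0}')$ computed with the $\beta$-twisted group law of Lemma \ref{lemmagroup}; thus $val(ab)=val(a)+val(b)$. When $a=0$ or $b=0$ both sides equal $(\infty,\infty)$.

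Finally, the ultrametric inequality $val(a+b)\geq\min\{val(a),val(b)\}$, argued by cases on $h_{0}=\min\operatorname{supp}(a)$ and $h_{0}'=\min\operatorname{supp}(b)$ (the cases $a=0$, $b=0$, or $a+b=0$ being immediate). If $h_{0}\neq h_{0}'$, say $h_{0}<h_{0}'$, then the coefficient of $t^{h_{0}}$ in $a+b$ is $a_{h_{0}}\neq 0$ and all lower coefficients vanish, so $val(a+b)=val(a)<val(b)$ in the lexicographic order. If $h_{0}=h_{0}'$ and $a_{h_{0}}+b_{h_{0}}\neq 0$, then $\min\operatorname{supp}(a+b)=h_{0}$ and $val(a+b)=(v(a_{h_{0}}+b_{h_{0}}),h_{0})$; since $v(a_{h_{0}}+b_{h_{0}})\geq\min\{v(a_{h_{0}}),v(b_{h_{0}})\}$ and the order on $A\times H$ respects the order on the first coordinate when the second coordinates coincide, the inequality follows. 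If $h_{0}=h_{0}'$ but $a_{h_{0}}+b_{h_{0}}=0$, then $\min\operatorname{supp}(a+b)>h_{0}$, so the second coordinate of $val(a+b)$ strictly exceeds $h_{0}$ and hence $val(a+b)>\min\{val(a),val(b)\}$ lexicographically.

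I expect the only genuine subtlety to lie in the multiplicativity step: one must be sure the leading coefficient of the product does not cancel — guaranteed here because $K$ is a field and $\sigma$ takes values in $K^{\times}$ — and one must see that the twisting factor $\sigma(-\beta(h_{0},h_{0}'))$ is exactly what converts the naive sum $(v(a_{h_{0}})+v(b_{h_{0}'}),h_{0}+h_{0}')$ into the $\beta$-twisted sum in $G_{\beta}$, so that $val$ is a homomorphism onto $G_{\beta}$ rather than onto the untwisted product $A\oplus H$. The remaining verifications are a routine adaptation of the classical valuation on Hahn series.
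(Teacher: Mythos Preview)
Your proof is correct and follows essentially the same approach as the paper's: both verify the three valuation axioms directly, using the twisted group law of $G_{\beta}$ together with $v\circ\sigma=\mathrm{id}_{A}$ for the multiplicativity step. Your version is simply more careful on points the paper leaves implicit, namely the uniqueness of the minimal pair $(h_{0},h_{0}')$ ensuring the leading coefficient of a product does not cancel, and the subcase $a_{h_{0}}+b_{h_{0}}=0$ in the ultrametric inequality.
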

\begin{proof} We check that $val$ is a valuation.
\begin{itemize}
\item Clearly $val(a)=(\infty,\infty)$ if and only if $a=0$;
\item Let $a,b\in K(t^{H},\beta)$ such that $h_{1}=min\,supp(a)$ and $h_{2}=min\,supp(b)$. Then $min\,supp(ab)=h_{1}+h_{2}$ and the corresponding  non-zero coefficient is $a_{h_{1}}b_{h_{2}}\sigma(-\beta(h_{1},h_{2}))$. We have \begin{align*} val(ab) & =(v(a_{h_{1}}b_{h_{2}}\sigma(-\beta(h_{1},h_{2}))),h_{1}+h_{2})=\\ & =(v(a_{h_{1}})+v(b_{h_{2}})+v(\sigma(-\beta(h_{1},h_{2}))),h_{1}+h_{2})=\\ &=(v(a_{h_{1}})+v(b_{h_{2}})-\beta(h_{1},h_{2}),h_{1}+h_{2})= \\ & =(v(a_{h_{1}}),h_{1})+v(a_{h_{2}}),h_{2})=val(a)+val(b);\end{align*}
\item Let $a,b\in K(t^{H},\beta)$ with $h_{1}=min\,supp(a)$ and $h_{2}=min\,supp(b)$. Then $val(a+b)=(v(a_{h}+b_{h}),h)$, where $h=min\{h_{1},h_{2}\}$, if $h_{1}\neq h_{2}$ or $h=h_{1}=h_{2}$, otherwise. Then, $$val(a+b)\geq(min\{v(a_{h}),v(b_{h})\},h)=min\{val(a),val(b)\}.$$
\end{itemize} \end{proof}

We denote the valued field defined above with values in $G_{\beta}$ by $K_{\beta}$. The following result ensures that the construction of $K_{\beta}$ is independent of the choice of $\beta$.
\begin{prop} Let $\beta'$ be a $2$-cocycle defined by a different transversal map $\alpha':H\longrightarrow G$. Then $K_{\beta'}$ valued in $G_{\beta'}$ is isomorphic to $K_{\beta}$ as valued fields. \end{prop}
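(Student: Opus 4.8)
The plan is to lift the value-group isomorphism $g\colon G_{\beta}\to G_{\beta'}$ produced in Fact \ref{isom} to the level of twisted power series. Recall from Fact \ref{relationbetas} that $\beta'=\beta-\psi^{*}$, where $\psi^{*}(h,h')=\psi(h+h')-\psi(h)-\psi(h')$ for a $1$-cochain $\psi\colon H\to A$, and that the corresponding group isomorphism is $g(z,h)=(z+\psi(h),h)$. Guided by the requirement $val_{\beta'}\circ F=g\circ val_{\beta}$, I would define
$$F\colon K_{\beta}\longrightarrow K_{\beta'},\qquad F\!\left(\sum_{h\in S}a_{h}t^{h}\right)=\sum_{h\in S}a_{h}\,\sigma(\psi(h))\,t^{h}.$$
Since $\psi(h)\in A$ and $\sigma\colon A\to K^{\times}$, each factor $\sigma(\psi(h))$ is a nonzero element of $K$, so $F$ leaves the (well ordered, cardinality $\leq\delta$) support unchanged and is therefore a well-defined map from $K_{\beta}$ to $K_{\beta'}$.

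Next I would check that $F$ is a homomorphism of fields. Additivity is immediate because both additions are defined coefficientwise. For multiplicativity it suffices, comparing the coefficient of $t^{l}$ on the two sides via the product formula used in the proof of Proposition \ref{field}, to verify the monomial identity
$$\sigma(-\beta(h,h'))\,\sigma(\psi(h+h'))=\sigma(\psi(h))\,\sigma(\psi(h'))\,\sigma(-\beta'(h,h'));$$
since $\sigma$ is a group homomorphism this reduces to $\psi(h+h')-\beta(h,h')=\psi(h)+\psi(h')-\beta'(h,h')$, which is exactly the relation $\beta-\beta'=\psi^{*}$. Moreover $F(1)=1$, because $\psi(0)=0$ as $\psi$ is a $1$-cochain. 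Finally $F$ is bijective: the assignment $\sum_{h}b_{h}t^{h}\mapsto\sum_{h}b_{h}\,\sigma(-\psi(h))\,t^{h}$ is well defined by the same argument and, using $\sigma(\psi(h))^{-1}=\sigma(-\psi(h))$, is a two-sided inverse of $F$.

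It remains to see that $F$ is compatible with the valuations, and here I would use the cross-section identity $v(\sigma(z))=z$ for $z\in A$. If $a=\sum_{h\in S}a_{h}t^{h}$ is nonzero with $h_{0}=\min S$, then $F(a)$ has the same least support element $h_{0}$ and leading coefficient $a_{h_{0}}\sigma(\psi(h_{0}))$, whence
$$val_{\beta'}\bigl(F(a)\bigr)=\bigl(v(a_{h_{0}})+v(\sigma(\psi(h_{0}))),\,h_{0}\bigr)=\bigl(v(a_{h_{0}})+\psi(h_{0}),\,h_{0}\bigr)=g\bigl(val_{\beta}(a)\bigr),$$
with $g$ the isomorphism of Fact \ref{isom}. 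Thus $F$ is an isomorphism of valued fields, so $(K_{\beta'},val_{\beta'})\cong(K_{\beta},val_{\beta})$. I do not anticipate a genuine obstacle: all the computations are routine manipulations of the cocycle relation and of the homomorphism $\sigma$, and the only non-mechanical point is spotting the coefficient twist $\sigma(\psi(h))$, which is in any case forced by the demand that $F$ intertwine the two valuations through the group isomorphism $g$.
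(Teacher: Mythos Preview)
Your proof is correct and follows essentially the same approach as the paper: both exploit the relation $\beta'=\beta-\psi^{*}$ from Fact~\ref{relationbetas} together with the cross-section $\sigma$ to build the field isomorphism, and then check compatibility with the valuations via the group isomorphism $g$ of Fact~\ref{isom}. Your explicit formula $F(\sum_{h}a_{h}t^{h})=\sum_{h}a_{h}\,\sigma(\psi(h))\,t^{h}$ is in fact a cleaner realisation of the map the paper describes only informally (as ``defined by linearity'' from $a\mapsto a$, $t\mapsto t$, $\sigma(-\beta(h,h'))\mapsto\sigma(-\beta'(h,h'))$), and your verification of multiplicativity and of the valuation identity is more detailed than the paper's sketch.
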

\begin{proof} By Fact \ref{relationbetas}, $\beta'=\beta-\psi^{*}$, where $\psi:H\longrightarrow A$ is a $1$-cochain. Thus $$\sigma(-\beta'(h,h'))=\sigma(-\beta(h,h'))\frac{\sigma(\psi(h+h'))}{\sigma(\psi(h))\sigma(\psi(h'))}.$$ Consider the map defined by linearity from\begin{align*}
f: & \,\,\,\,\,\,\,\,\,\,\,\,\,\,\,K_{\beta} \,\,\,\,\,\,\longrightarrow \,\,\,\,\,\,K_{\beta'} \\
   & \,\,\,\,\,\,\,\,\,\,\,\,\,\,\,a\in K \,\mapsto a\in K \\
   & \,\,\,\,\,\,\,\,\,\,\,\,\,\,\,\,\,\,\,\,\,\,\,t \,\,\,\,\,\,\,\mapsto \,\,\,\,t \\
   & \sigma(-\beta(h,h')) \mapsto \sigma(-\beta'(h,h'))
\end{align*} Then $f$ is an isomorphism of fields. Let $\hat{f}: G_{\beta}\longrightarrow G_{\beta'}$ be the isomorphism of groups such that $f(\beta)=\beta'$ and it is the identity elsewhere. Then $\hat{f}$ satisfies $\hat{f}(val(a))=val'(f(a))$ for all $a\in K_{\beta}$, where $val$ is the valuation on $K_{\beta}$ and $val'$ is the valuation on $K_{\beta'}$. Thus $f$ is an isomorphism of valued fields.
\end{proof}


\begin{thm} \label{pseudocompleteness} Let $\lambda$ be a limit ordinal. The valued field $(K(t^{H},\beta)_{\delta},val)$ is $\lambda$-pseudo complete if $K$ is $\lambda$-pseudo complete.\end{thm}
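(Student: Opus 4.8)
The plan is, given a pseudo-Cauchy sequence $(a_\mu)_{\mu<\lambda}$ in $(K(t^H,\beta)_\delta,val)$ with breadth sequence $\gamma_\mu:=val(a_{\mu+1}-a_\mu)$ (strictly increasing in $G_\beta$), to exhibit a pseudo-limit by hand; for this it suffices to produce $\ell\in K(t^H,\beta)_\delta$ with $val(\ell-a_\mu)=\gamma_\mu$ for every $\mu<\lambda$. Writing $\gamma_\mu=(z_\mu,h_\mu)\in G_\beta=A\times H$ and recalling that the order on $G_\beta$ is lexicographic with the $H$-coordinate dominant, the sequence $(h_\mu)_{\mu<\lambda}$ in $H$ is non-decreasing. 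Since a pseudo-limit of a cofinal subsequence is a pseudo-limit of the whole sequence, I would first pass to a cofinal subsequence so that $\lambda$ becomes a regular cardinal and $(h_\mu)_{\mu<\lambda}$ is either constant or strictly increasing, and then treat the two cases separately.

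In the case $h_\mu=h^{*}$ for all $\mu$, the equalities $val(a_\nu-a_\mu)=(z_\mu,h^{*})$ for $\mu<\nu$ force all the $a_\mu$ to have a common coefficient at every exponent $<h^{*}$, and make the coefficients $b_\mu:=(a_\mu)_{h^{*}}\in K$ into a pseudo-Cauchy sequence in $(K,v)$ of length $\lambda$ with breadth $(z_\mu)_{\mu<\lambda}$. This is the only point where the hypothesis is used: $\lambda$-pseudo completeness of $K$ gives $b^{*}\in K$ with $v(b^{*}-b_\mu)=z_\mu$ for all $\mu$, and then $\ell:=a_0+(b^{*}-b_0)\,t^{h^{*}}$ works, since its support lies in $\operatorname{supp}(a_0)\cup\{h^{*}\}$ and a short computation shows $\operatorname{supp}(\ell-a_\mu)$ has least element $h^{*}$ with leading coefficient $b^{*}-b_\mu$, whence $val(\ell-a_\mu)=(z_\mu,h^{*})=\gamma_\mu$.

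In the case $(h_\mu)$ strictly increasing, no completeness of $K$ is needed. For each $h\in H$ admitting some $\mu$ with $h<h_\mu$, the coefficient $(a_\nu)_h$ stabilizes as soon as $h_\nu>h$ (consecutive terms then agreeing at $h$); call $\ell_h$ its eventual value, put $\ell_h:=0$ otherwise, and set $\ell:=\sum_h\ell_h\,t^h$. One checks that $\ell$ and $a_\mu$ agree at every exponent $<h_\mu$, while the coefficient of $\ell-a_\mu$ at $h_\mu$ coincides with the leading coefficient of $a_{\mu+1}-a_\mu$, of $v$-value $z_\mu$; hence $\operatorname{supp}(\ell-a_\mu)$ has least element $h_\mu$ and $val(\ell-a_\mu)=(z_\mu,h_\mu)=\gamma_\mu$, so $\ell$ is a pseudo-limit.

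The part I expect to require the most care is not the pseudo-limit property but the verification that the $\ell$ produced above really belongs to $K(t^H,\beta)_\delta$, i.e.\ that $\operatorname{supp}(\ell)$ is well ordered of size $\le\delta$. Well-orderedness holds in both cases: a strictly descending sequence inside $\operatorname{supp}(\ell)$ would be bounded above by some $h_\mu$, hence contained in $\operatorname{supp}(a_\mu)\cap\{h<h_\mu\}=\operatorname{supp}(\ell)\cap\{h<h_\mu\}$, contradicting well-orderedness of $\operatorname{supp}(a_\mu)$. For the cardinality, the constant case is immediate, while in the strictly increasing case $\operatorname{supp}(\ell)=\bigcup_{\mu<\lambda}\bigl(\operatorname{supp}(a_\mu)\cap\{h<h_\mu\}\bigr)$ is a union of $|\lambda|$ well-ordered sets each of size $\le\delta$, so $|\operatorname{supp}(\ell)|\le\delta$ whenever $|\lambda|\le\delta$ --- in particular for $\lambda=\omega$, and after the reduction above whenever $\operatorname{cf}(\lambda)\le\delta$. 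What remains --- the cofinal-subsequence reductions and the routine bookkeeping that finite unions and one-point modifications of well-ordered subsets of $H$ stay well ordered of size $\le\delta$ --- is straightforward.
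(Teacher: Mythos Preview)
Your proof is correct and takes essentially the same route as the paper's: split according to whether the $H$-components $(h_\mu)$ of the breadth are eventually constant (invoke pseudo completeness of $K$ on the $h^{*}$-coefficients) or increasing (build the limit coefficientwise from the stabilized coefficients), then check well-orderedness of the support by restricting below some $h_\mu$. Your treatment is in fact tidier than the paper's --- passing to a cofinal subsequence gives a clean dichotomy, and you explicitly flag the bound $|\operatorname{supp}(\ell)|\le\delta$, whereas the paper's proof is written with ``$\alpha<\omega$'' throughout and never addresses this cardinality constraint.
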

\begin{proof} 
Let $\lambda$ be a limit ordinal and $(c_{\alpha})_{\alpha<\lambda}$ be a pseudo-Cauchy sequence in $K(t^{H},\beta)_{\delta}$ and let $c_{\alpha}=\sum_{h\in S}a_{\alpha,h}t^{h}$. Since it is pseudo-Cauchy, there exists $\alpha_{0}<\lambda$ such that for all $\alpha_{0}<\alpha'<\alpha*<\alpha''$, $$val(c_{\alpha''}-c_{\alpha*})>val(c_{\alpha*}-c_{\alpha'}).$$ Set $g_{\alpha}:=val(c_{\alpha}-c_{\alpha+1})=(v(a_{\alpha,h_{\alpha,\alpha+1}}-a_{\alpha+1,h_{\alpha,\alpha+1}}),h_{\alpha,\alpha+1}),$ where $h_{\alpha,\alpha+1}$ is the minimum of $supp(c_{\alpha}-c_{\alpha+1})$. Since $$val(c_{\alpha}-c_{\alpha'})\geq min\{val(c_{\alpha}-c_{\alpha+1}),val(c_{\alpha+1}-c_{\alpha'})\},$$ and the sequence is pseudo-Cauchy, we have that for all $\alpha'>\alpha>\alpha_{0}$, $$val(c_{\alpha}-c_{\alpha'})=val(c_{\alpha}-c_{\alpha+1})=g_{\alpha}.$$ Thus, if we set $h_{\alpha,\alpha+1}=\dot{h}_{\alpha}$ and $v(a_{\alpha,h_{\alpha,\alpha+1}}-a_{\alpha+1,h_{\alpha,\alpha+1}})=\dot{\gamma}_{\alpha}$, then for all $\alpha'>\alpha>\alpha_{0}$, $$h_{\alpha,\alpha'}=\dot{h}_{\alpha}$$ and $$v(a_{\alpha,h_{\alpha,\alpha'}}-a_{\alpha',h_{\alpha,\alpha'}})=v(a_{\alpha,\dot{h}_{\alpha}}-a_{\alpha',\dot{h}_{\alpha}})=\dot{\gamma}_{\alpha}.$$ It follows that for all $\alpha'>\alpha>\alpha_{0}$, $$a_{\alpha,h}=a_{\alpha',h},\,\,\text{for all}\,\,h<\dot{h}_{\alpha},$$ and the difference of their $\dot{h}_{\alpha}$-coefficients has always a fixed valuation $\dot{\gamma}_{\alpha}$ depending on $\alpha$. Since the sequence is pseudo-Cauchy, the sequence of valuations $g_{a}=(\dot{\gamma}_{a},\dot{h}_{\alpha})_{\alpha<\omega}$ for all $\alpha>\alpha_{0}$ is strictly increasing. We have two cases: \begin{enumerate} \item $(\dot{h}_{\alpha})_{\alpha<\lambda}$ is strictly increasing for all $\alpha>\alpha_{0}$, \item there is $\alpha_{1}>\alpha_{0}$ such that for all $\alpha>\alpha_{1}$, $\dot{h}_{\alpha}=\dot{h}_{\alpha_{1}}$ and $(\dot{\gamma}_{\alpha})_{\alpha_{1}<\alpha<\lambda}$ is strictly increasing.\end{enumerate}
If case (1) holds, let $$b_{h}:=\left \{ \begin{array}{rl}
a_{\alpha,h}\,\,\,\,\,, & \text{if}\,\,h<\dot{h}_{\alpha}\,\,\text{for some}\,\alpha<\omega;\\
0\,\,\,\,\,, & \,\,\text{otherwise.}
\end{array}
\right.$$
If case (2) holds, let $$b_{h}:=\left \{ \begin{array}{rl}
a_{\alpha_{1},h}\,\,\,\,\,, &\,\, \text{if}\,\,h<\dot{h}_{\alpha_{1}},\\
PL\left((a_{\alpha,\dot{h}_{\alpha_{1}}})_{\alpha<\omega}\right), &\,\,\text{if}\,\,h=\dot{h}_{\alpha_{1}},\\
0\,\,\,\,\,, & \,\,\text{otherwise,}
\end{array}
\right.$$

where $PL\left((a_{\alpha,\dot{h}_{\alpha_{1}}})_{\alpha<\omega}\right)$ is any pseudo-limit of the sequence in brackets, whose existence is ensured by the pseudo completeness of $(K, v)$.

We now show that $$b:=\sum_{h\in H}b_{h}t^{h}\in K(t^{H},\beta)_{\delta},$$ that is, $supp(b)=\{h\in H\vert\,b_{h}\neq0\}$ is a well ordered subset of $H$.
It suffices to show that for all $\eta\in supp(b),$ $(-\infty,\eta]\cap supp(b)$ is well ordered. We have different arguments depending on the two above cases. Take $\eta\in supp(b)$,\begin{enumerate} 
\item if case (1) holds, there exists $\alpha<\omega$ such that $\eta<\dot{h}_{\alpha}$, thus $\eta\in supp(c_{\alpha})$. In particular, $$(-\infty,\eta]\cap supp(b)\subseteq supp(c_{\alpha})$$ and thus $$(-\infty,\eta]\cap supp(b)=supp(c_{\alpha})\cap(-\infty,\eta],$$ that is a well ordered subset of $H$.
\item if case (2) holds, then $\eta=\dot{h}_{\alpha_{1}}=\dot{h}_{\alpha}$ for all $\alpha>\alpha_{1}$. In particular, $\eta\in supp(c_{\alpha_{1}})$. Then the claim follows as in the previous case.
\end{enumerate}
Finally, by definition of $b$, $$val(b-c_{\alpha})=(v(b_{\dot{h}_{\alpha}}-a_{\alpha,\dot{h}_{\alpha}}),\dot{h}_{\alpha})=(v(a_{\alpha+1,\dot{h}_{\alpha}}-a_{\alpha,\dot{h}_{\alpha}}),\dot{h}_{\alpha})=(\dot{\gamma}_{\alpha},\dot{h}_{\alpha})= g_{\alpha}.$$ Since $g_{\alpha}$ is strictly increasing, then $(c_{\alpha})_{\alpha\in\omega}$ converges to $b$.
\end{proof}
Notice that, if $K$ is $\lambda$-pseudo complete for some limit ordinal $\lambda$, then $K(t^{H},\beta)_{\delta}$ is henselian.
 
In the following proposition, we show how the pseudo completeness of $K(t^{H},\beta)_{\delta}$ can be used to prove the existence of inverse elements for non-zero elements in $K(t^{H},\beta)_{\delta}$, and thus that it is a field. We use the method shown in \cite[Chapter 3]{EP05} for generalized power series, giving an alternative proof of Proposition \ref{field}.
\begin{prop}\label{inverse} Let $(K,v)$ be an $\omega$-pseudo complete valued field with values in $A$, the smallest convex subgroup of the value group of $K$. If $f\in K(t^{H},\beta)_{\delta}$ is non-zero, then it has a multiplicative inverse. \end{prop}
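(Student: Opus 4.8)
The plan is to mimic, in the valued setting furnished by Theorem \ref{pseudocompleteness}, the classical geometric--series construction of inverses, replacing the appeal to Neumann's Lemma made in Proposition \ref{field} by an appeal to pseudo completeness. First I would reduce to inverting an element of the form $1-a$: exactly as in the proof of Proposition \ref{field}, write $f=ct^{\tilde h}(1-a)$ with $c\in K^{\times}$, $\tilde h\in H$, and $a\in K(t^{H},\beta)_{\delta}$ of strictly positive support. The monomial $ct^{\tilde h}$ is invertible, with inverse $\frac{c^{-1}t^{-\tilde h}}{\sigma(-\beta(\tilde h,-\tilde h))}$ (a one-line check from the multiplication rule), so it is enough to invert $1-a$.

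Next I would build the candidate inverse as a pseudo-limit. Put $s_{N}=\sum_{n=0}^{N}a^{n}$; by the ring axioms established in Proposition \ref{field} and a telescoping, $(1-a)s_{N}=1-a^{N+1}$. Since $supp(a)\subseteq H_{>0}$, writing $h_{0}=min\,supp(a)>0$ we have $val(a^{N+1})=(N+1)val(a)$, whose $H$-component is $(N+1)h_{0}$; hence $val(s_{N'}-s_{N})=val(a^{N+1})$ for $N<N'$ is a strictly increasing function of $N$, and $(s_{N})_{N<\omega}$ is a pseudo-Cauchy sequence. Applying Theorem \ref{pseudocompleteness} with $\lambda=\omega$ (legitimate since $K$ is $\omega$-pseudo complete), $K(t^{H},\beta)_{\delta}$ is $\omega$-pseudo complete, so $(s_{N})$ has a pseudo-limit $g$; moreover the leading exponents $(N+1)h_{0}$ are strictly increasing, so we are in case (1) of that proof, whose explicit construction gives $supp(g)\subseteq\bigcup_{n\ge 0}supp(a^{n})$ together with $val(g-s_{N})=val(a^{N+1})$ for all $N$.

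Finally I would verify $(1-a)g=1$. Set $r=1-(1-a)g$. For each $N$, $r=\big(1-(1-a)s_{N}\big)+(1-a)(s_{N}-g)=a^{N+1}+(1-a)(s_{N}-g)$, and since $val(1-a)=0$ and $val(g-s_{N})=val(a^{N+1})$ this yields $val(r)\ge(N+1)val(a)$ for every $N<\omega$. On the other hand $supp(r)\subseteq\bigcup_{n}supp(a^{n})$, and the constant terms of $(1-a)g$ and $1$ agree (both equal $1$), so if $r\ne 0$ then $min\,supp(r)\in supp(a^{n_{0}})$ for some $n_{0}\ge 1$. Deriving a contradiction from these two facts is the technical heart of the argument: one can, for instance, split $a=a'+a''$ according to whether the exponents lie in the convex subgroup $\Gamma_{0}\le H$ generated by $h_{0}$, observe that $g$ is then the inverse of $1-a'$ while $r=a''(1-a')^{-1}$ is supported strictly above $\Gamma_{0}$, and iterate; what makes the iteration stop is the well-orderedness of $supp(a)$. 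Granting $r=0$ one gets $(1-a)g=1$, hence $f^{-1}=\frac{c^{-1}t^{-\tilde h}}{\sigma(-\beta(\tilde h,-\tilde h))}\,g\in K(t^{H},\beta)_{\delta}$.

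The step I expect to be the genuine obstacle is this vanishing of $r$: the estimate $val(r)\ge(N+1)val(a)$ for every $N$ only forces the leading exponent of $r$ above all multiples of $h_{0}$, which, when $h_{0}$ lies in a proper convex subgroup of $H$, does not by itself give $r=0$. So one must supplement it with the fine structure of the pseudo-limit (its support sits inside the subsemigroup generated by $supp(a)$) and the well-orderedness of the supports — in effect the same combinatorial input that Proposition \ref{field} extracts from Neumann's Lemma — whereas the reduction to $1-a$, the telescoping identity, and the pseudo-Cauchyness of $(s_{N})$ are all routine.
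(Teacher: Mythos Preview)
Your reduction to $1-a$, the telescoping identity, and the pseudo-Cauchyness of $(s_{N})$ are all correct. The gap you flag at the end, however, is real and your proposed patch does not close it. The estimate $val(r)\ge (N+1)\,val(a)$ only forces the leading exponent of $r$ above the convex subgroup of $G_{\beta}$ generated by $val(a)$; when $h_{0}$ sits in a proper convex subgroup of $H$ this says nothing about exponents beyond that subgroup. Your sketch of splitting $a=a'+a''$ along $\Gamma_{0}$ and ``iterating'' is not a proof: the pseudo-limit $g$ of $\sum_{n}a^{n}$ is not the inverse of $1-a'$ (the cross-terms $a'{}^{i}a''{}^{j}$ contribute to $g$ below the next convex level), and turning the iteration into an honest transfinite induction on convex subgroups would require pseudo-completeness at cofinalities you have not secured. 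In effect you have rediscovered exactly the combinatorial obstacle that Neumann's Lemma handles in Proposition~\ref{field}, without actually bypassing it.

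The paper sidesteps this by a different device (following \cite[Chapter 3]{EP05}). It does not reduce to $1-a$. Instead it considers the set $\Sigma=\{val(1-fg):g\in K(t^{H},\beta)_{\delta},\ 1-fg\ne 0\}$, shows $\Sigma$ has no maximum by a one-step monomial correction, and then picks $(g_{\nu})_{\nu<\rho}$ so that $(val(1-fg_{\nu}))_{\nu<\rho}$ is \emph{cofinal} in $\Sigma$. That sequence is pseudo-Cauchy for the same reason yours is; if $r$ is a pseudo-limit, one gets $val(1-fr)>val(1-fg_{\nu})$ for every $\nu$, hence $val(1-fr)\notin\Sigma$, hence $1-fr=0$. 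The cofinality of the approximating sequence is exactly the missing ingredient: it replaces your unbounded-but-not-cofinal sequence $(N+1)\,val(a)$ and makes the vanishing of the remainder a one-line tautology rather than a delicate support argument.
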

\begin{proof} Take $0\neq f=\sum_{h\in S}a_{h}t^{h}\in K(t^{H},\beta)_{\delta}$, with $h_{1}=min\,supp(f)$. Without loss of generality we can assume $a_{h_{1}}=1$. Consider the set $$\Sigma=\left\{val(1-fg)\,\vert\,g\in K(t^{H},\beta)_{\delta},\,1-fg\neq0\right\}.$$ First of all, we show that $\Sigma$ has no maximal element. For instance, take $g\in K(t^{H},\beta)_{\delta}$ such that $1-fg\neq0$. Then $1-fg=\sum_{h\in S'}b_{h}t^{h}$ and take $h_{2}=min\,supp(1-fg)$. Set $g'=b_{h_{2}}p^{\beta(h_{1},h_{2}-h_{1})}t^{h_{2}-h_{1}}$. Then $min\,supp(fg')=h_{2}$ and its first non-zero coefficient is $b_{h_{2}}$. Thus $$val(1-f(g+g'))=val((1-fg)-fg')>val(1-fg).$$ 

In this way we can choose a strictly increasing sequence $(\gamma_{\nu})_{\nu<\rho}$ of elements of $\Sigma$ that is cofinal in $\Sigma$, that is for every $\gamma\in\Sigma$ there is $\nu<\rho$ such that $\gamma<\gamma_{\nu}$. Moreover, we choose $(g_{\nu})_{\nu<\rho}$ a sequence of elements of $K(t^{H},\beta)_{\delta}$ such that $\gamma_{\nu}=val(1-fg_{\nu})$ for every $\nu$. Notice that, since $(\gamma_{\nu})_{\nu<\rho}$ is strictly increasing, for every $\rho>\mu>\nu>0$, $$val(1-fg_{\mu}-(1-fg_{\nu}))=val(1-fg_{\nu})=\gamma_{\nu}$$ and so $$\gamma_{\nu}=val(1-fg_{\mu}-(1-fg_{\nu}))=val(f(g_{\nu}-g_{\mu}))=val(f)+val(g_{\mu}-g_{\nu}).$$ Thus, for every $\rho>\mu>\nu>\lambda>0$, $$val(g_{\mu}-g_{\nu})>val(g_{\nu}-g_{\lambda})\,\,\text{if and only if}\,\,\gamma_{\nu}>\gamma_{\lambda}.$$ It follows that the sequence $(g_{\nu})_{\nu<\rho}$ is pseudo-Cauchy. 

Since $K(t^{H},\beta)_{\delta}$ is pseudo complete, let $r=\sum_{h\in S^{*}}c_{h}t^{h}$ be the pseudo-limit of $(g_{\nu})_{\nu<\rho}$. 

\textbf{Claim:} $1-fr=0$. Suppose by contradiction that there is $0<\nu<\rho$ such that $val(1-fr)<val(1-fg_{\nu})=\gamma_{\nu}$. Since $(\gamma_{\nu})_{\nu<\rho}$ is strictly increasing, then for every $0<\nu<\mu<\rho$ we have $$val(1-fr)<val(1-fg_{\nu})<val(1-fg_{\mu}).$$ Moreover, since $r$ is the pseudo-limit of $(g_{\nu})_{\nu<\rho}$, for every $0<\nu<\mu<\rho$, $val(r-g_{\mu})>val(r-g_{\nu})$. Then we have \begin{align*} val(1-fr)&=val(1-fg_{\nu}-(1-fr))=val(f(r-g_{\nu}))=\\&=val(f)+val(r-g_{\nu})<val(f)+val(r-g_{\mu})=\\&=val(f(r-g_{\mu}))=val(1-fg_{\mu}-(1-fr))=val(1-fr), \end{align*} which proves the contradiction. Thus $val(1-fr)>val(1-fg_{\nu})$ for every $0<\nu<\rho$ and since $\Sigma$ has no maximal element, $1-fr=0$.  \end{proof}

\section{A characterization for formally $p$-adic fields}
One can apply the above construction to the field of $p$-adic numbers as follows. Consider any $\mathbb{Z}$-group $G$. Since $\mathbb{Z}$ is a prime model of its theory, there is a unique elementary embedding of $\mathbb{Z}$ into $G$. Let $H=G/\mathbb{Z}$, then $H$ is a divisible ordered abelian group, since $\mathbb{Z}$ is convex in $G$. We have the following exact sequence $$0\xrightarrow{}\mathbb{Z}\xrightarrow{i} G\xrightarrow{\rho} H\xrightarrow{} 0.$$
\begin{rem} An exact sequence of $\mathbb{Z}$-groups as above does not always split, i.e. there is a $\mathbb{Z}$-group $G$ that does not have $\mathbb{Z}$ as a direct summand.\end{rem}
\begin{proof} See \cite[Corollary 2, p. 266]{Mendelson61}. In particular, it is shown that if $R$ is a non-standard model of Peano arithmetic (i.e. an ordered ring elementarily equivalent to $\mathbb{Z}$), then the additive group cannot have $\mathbb{Z}$ as a direct summand.\end{proof} 
Consider an exact sequence as above which does not split and define a cocycle $\beta$ from $H$ to $\mathbb{Z}$. Since $\mathbb{Q}_{p}$ with the $p$-adic valuation has mixed characteristic and values in $\mathbb{Z}$ we have the following corollary that summarizes the construction given by Ax and Kochen in \cite{AxK65}.
\begin{cor} The field $\mathbb{Q}_{p}(t^{H},\beta)_{\delta}$ defined as in Section \ref{twisted}, is an $\omega$-pseudo complete valued field with the valuation $val_{p}$ on $\mathbb{Z}\times H$ defined as in Proposition \ref{valuation}. \end{cor}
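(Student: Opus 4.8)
The plan is to verify that $(\mathbb{Q}_{p},v_{p})$ satisfies all the hypotheses required by the construction of Section \ref{twisted}, and then to read off the conclusion from Propositions \ref{field} and \ref{valuation} together with Theorem \ref{pseudocompleteness}. First I would note that $(\mathbb{Q}_{p},v_{p})$ is a valued field of mixed characteristic $(0,p)$ whose value group is $\mathbb{Z}$, and that by the set-up preceding the corollary $\mathbb{Z}$ is precisely the smallest convex subgroup $A$ of the chosen $\mathbb{Z}$-group $G$ (via the unique elementary embedding $\mathbb{Z}\hookrightarrow G$), with $H=G/\mathbb{Z}$ divisible. Since $p$ is a uniformizer, i.e. $v_{p}(p)=1$, the map $\sigma:\mathbb{Z}\longrightarrow\mathbb{Q}_{p}^{\times}$, $m\mapsto p^{m}$, is a group homomorphism with $v_{p}(\sigma(m))=m$, hence a cross-section for $v_{p}$; this is exactly the situation singled out in the remark following the definition of the multiplication on $K(t^{H},\beta)_{\delta}$. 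Thus, for any fixed $2$-cocycle $\beta:H\times H\longrightarrow\mathbb{Z}$ coming from a (non-split) transversal map of the exact sequence $0\to\mathbb{Z}\to G\to H\to 0$, and noting that $\sigma(-\beta(h,h'))=p^{-\beta(h,h')}\in\mathbb{Q}_{p}$, the twisted product $a_{h}t^{h}\cdot b_{h'}t^{h'}=a_{h}b_{h'}p^{-\beta(h,h')}t^{h+h'}$ is well defined on $\mathbb{Q}_{p}(t^{H},\beta)_{\delta}$.

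Once the hypotheses are in place, the field structure is immediate from Proposition \ref{field}, applied with $K=\mathbb{Q}_{p}$, and Proposition \ref{valuation} equips $\mathbb{Q}_{p}(t^{H},\beta)_{\delta}$ with the non-trivial valuation $val_{p}(\sum_{h\in S}a_{h}t^{h})=(v_{p}(a_{h_{0}}),h_{0})$, where $h_{0}=\min S$, taking values in $G_{\beta}$; by Lemma \ref{lemmagroup} the underlying set of $G_{\beta}$ is $\mathbb{Z}\times H$ with the twisted group law and the lexicographic order, which is the statement we want. (If desired one could also invoke the proposition on independence of $\beta$ to observe that, up to valued-field isomorphism, this does not depend on the chosen transversal map.)

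For the $\omega$-pseudo completeness I would apply Theorem \ref{pseudocompleteness} with $\lambda=\omega$: it reduces the claim to the fact that $(\mathbb{Q}_{p},v_{p})$ is $\omega$-pseudo complete. This is clear, since $\mathbb{Q}_{p}$ is complete for the $p$-adic valuation — any pseudo-Cauchy sequence indexed by $\omega$ is, after the standard reindexing, a Cauchy sequence and therefore has a limit, which is in particular a pseudo-limit. Theorem \ref{pseudocompleteness} then yields that $\mathbb{Q}_{p}(t^{H},\beta)_{\delta}$ is $\omega$-pseudo complete, finishing the argument.

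I do not expect a genuine obstacle here: the corollary is a direct specialization of the general machinery of Section \ref{twisted}, and the only point requiring a moment's attention is confirming that $\mathbb{Q}_{p}$ carries the cross-section $m\mapsto p^{m}$, so that the twisted multiplication actually lands in $\mathbb{Q}_{p}(t^{H},\beta)_{\delta}$; everything else is a citation of results already established.
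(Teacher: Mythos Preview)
Your proposal is correct and matches the paper's approach: the corollary is stated there without proof, as an immediate specialization of the general construction in Section~\ref{twisted} once one observes that $\mathbb{Q}_{p}$ is a mixed characteristic valued field with value group $\mathbb{Z}$ (the smallest convex subgroup of $G$) and cross-section $m\mapsto p^{m}$. Your explicit invocation of Propositions~\ref{field} and~\ref{valuation} and Theorem~\ref{pseudocompleteness} (using that completeness of $\mathbb{Q}_{p}$ gives $\omega$-pseudo completeness) is exactly the intended justification.
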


We recall the following theorem from Ax-Kochen's paper \cite{AxK65} that follows from the Hahn-like construction over $\mathbb{Q}_{p}$ and \cite[Theorem 1]{AxK65}.
\begin{thm}\label{AKchar} If $K$ is an $\omega$-pseudo complete formally $p$-adic field with value group a $\mathbb{Z}$-group $G$ of cardinality $\aleph_{1}$, then it is isomorphic to $\mathbb{Q}_{p}(t^{H},\beta)_{\aleph_{0}}$, where $H=G/\mathbb{Z}$, assuming the Continuum Hypothesis. \end{thm}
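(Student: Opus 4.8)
The plan is to realize $K$ as an extension of $\mathbb{Q}_p(t^H,\beta)_{\aleph_0}$ inside a common monster model and then argue that the inclusion is onto, exploiting the cardinality constraint $|G| = \aleph_1$ together with the Continuum Hypothesis. First I would fix the value group $G$ of $K$, let $\mathbb{Z}$ be its minimal convex subgroup, set $H = G/\mathbb{Z}$, and pick a transversal map $\alpha\colon H\to G$ defining a $2$-cocycle $\beta$; by Fact \ref{isom} this identifies $G$ with $G_\beta$ up to isomorphism of ordered abelian groups, independently of the choice of $\alpha$. By the discussion preceding Lemma \ref{C} (the common-subfield property of henselian valued fields with residue field $\mathbb{F}_p$ and discrete value subgroup), $K$ contains an isomorphic copy of $\mathbb{Q}_p$ as a valued subfield, with $v(p)$ the minimal positive element; fixing a uniformizer $\pi$ with $v(\pi) = 1$ gives the cross-section $\sigma(m) = \pi^m$ on $\mathbb{Z}$. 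Then I would build a valued-field embedding $\iota\colon \mathbb{Q}_p(t^H,\beta)_{\aleph_0}\hookrightarrow K$: send $\mathbb{Q}_p$ to its copy in $K$, and for each $h\in H$ send $t^h$ to an element $\tau_h\in K$ with $v(\tau_h) = \alpha(h)$, chosen so that $\tau_h\tau_{h'} = p^{\beta(h,h')}\tau_{h+h'}$ (this is possible precisely because $\beta$ is the cocycle attached to $\alpha$ and $\mathbb{Z}$ is a cross-section — the relation forces the correction factor $\sigma(\beta(h,h'))$ exactly as in the definition of the multiplication in Section \ref{twisted}). Extending $\iota$ by the rule $\sum a_h t^h\mapsto \sum a_h\tau_h$ requires checking that countable well-ordered-support sums make sense in $K$; here I would use that $K$ is $\omega$-pseudo complete, hence $\aleph_1$-pseudo complete is not claimed, but countable pseudo-Cauchy sequences obtained from partial sums of such a series converge in $K$, so the sum is a well-defined element, and the cocycle identity guarantees $\iota$ respects multiplication, with $val$ as in Proposition \ref{valuation} matching $v$ on the image.

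Next I would check that $\iota$ is a valued-field \emph{embedding} that is moreover \emph{immediate-closed in the relevant sense}: its image $K_0 := \iota(\mathbb{Q}_p(t^H,\beta)_{\aleph_0})$ has value group all of $G$ (since $v(\tau_h) = \alpha(h)$ ranges over a transversal and $v(p) = 1$ generates $\mathbb{Z}$) and residue field $\mathbb{F}_p$ (since the residue field of the twisted power series field is that of $\mathbb{Q}_p$). Thus $K/K_0$ is an \emph{immediate} extension of henselian valued fields. By Theorem \ref{pseudocompleteness}, $K_0$ is itself $\omega$-pseudo complete. Now the key point: a valued field is $\omega$-pseudo complete if and only if it has no proper immediate extension generated by (the limit of) a countable pseudo-Cauchy sequence without a pseudo-limit; so $K_0$ being $\omega$-pseudo complete means every countable pseudo-Cauchy sequence over $K_0$ already has a pseudo-limit in $K_0$. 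To conclude $K = K_0$ I would argue by cardinality: both $K$ and $K_0$ have cardinality $\aleph_1$ (for $K_0$, the series have countable support and countably many coefficients from $\mathbb{Q}_p$, and $|H|\le|G| = \aleph_1$, so under CH the count is $\aleph_1$). Suppose $K\supsetneq K_0$ and pick $x\in K\setminus K_0$; since the extension is immediate, there is a pseudo-Cauchy sequence in $K_0$ with $x$ as a pseudo-limit. If this sequence can be taken of \emph{countable} length, $\omega$-pseudo completeness of $K_0$ yields a pseudo-limit $y\in K_0$; then $x - y$ is again a pseudo-limit of a shorter or trivial sequence, and iterating one reaches a contradiction unless $x\in K_0$. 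The potential gap is that the pseudo-Cauchy sequence witnessing $x$ might only be available of length $\aleph_1$, not $\omega$; this is where CH and the Ax–Kochen machinery (Theorem 1 of \cite{AxK65}) must enter — one uses that $G$ has cardinality $\aleph_1$ and CH to run a back-and-forth of length $\aleph_1$ building an isomorphism $K\cong K_0$, filling in $\aleph_1$-many ``missing'' elements on the $K_0$ side using saturation/pseudo-completeness and on the $K$ side using that $K$ has a cross-section and the same residue field and value group.

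The main obstacle, as indicated, is the final step: upgrading the algebraic embedding $\iota$ to an \emph{isomorphism}. The honest approach is a transfinite back-and-forth of length $\aleph_1$, where at successor stages one extends a partial valued-field isomorphism between countable subfields by matching a new element — using that $\mathbb{Q}_p$ is a common subfield, that both sides have value group $G_\beta$ and residue field $\mathbb{F}_p$, and that $\omega$-pseudo completeness lets one realize pseudo-limits of the countable pseudo-Cauchy sequences that arise. CH enters to guarantee that an enumeration of each side in order type $\omega_1$ exists and that the countable approximating subfields stay countable, so the construction terminates after $\aleph_1$ steps with a total isomorphism. I would cite \cite[Theorem 1]{AxK65} for the precise back-and-forth criterion (equality of value groups as ordered groups with distinguished $v(p)$, equality of residue fields, and $\omega$-pseudo completeness of both) rather than reprove it, and present the new content as: (i) the explicit twisted-power-series model $\mathbb{Q}_p(t^H,\beta)_{\aleph_0}$ is $\omega$-pseudo complete (Theorem \ref{pseudocompleteness}) with the right value group (Fact \ref{isom}) and residue field, and (ii) it embeds into any such $K$ via the cross-section, so that the Ax–Kochen uniqueness theorem applies to force the isomorphism.
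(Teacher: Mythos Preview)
Your final paragraph arrives at essentially the paper's argument. The paper states Theorem~\ref{AKchar} as a direct consequence of two ingredients: (i) $\mathbb{Q}_p(t^H,\beta)_{\aleph_0}$ is an $\omega$-pseudo complete formally $p$-adic field with value group $G_\beta\cong G$ (via Theorem~\ref{pseudocompleteness}, Fact~\ref{isom}, and the residue computation), and (ii) the back-and-forth uniqueness result \cite[Theorem~1]{AxK65} for $\omega$-pseudo complete formally $p$-adic fields of cardinality $\aleph_1$ with the same value group, under CH. No embedding of one field into the other is ever constructed; the isomorphism is produced directly by the back-and-forth.

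Your detour through an explicit embedding $\iota$ contains a genuine gap and is also unnecessary. Choosing $\tau_h\in K$ with $v(\tau_h)=\alpha(h)$ and $\tau_h\tau_{h'}=p^{-\beta(h,h')}\tau_{h+h'}$ is equivalent to producing a cross-section for the full valuation $v\colon K^\times\to G$ (namely $s(z,h)=p^z\tau_h$), and an $\omega$-pseudo complete formally $p$-adic field need not admit one. Your justification---that $\beta$ is the cocycle attached to $\alpha$---only shows the two sides of the desired identity have equal \emph{value}; it says nothing about equality in $K^\times$. Once arbitrary $\tau_h$ with the correct values are chosen, the ratios $u_{h,h'}=\tau_h\tau_{h'}\,p^{\beta(h,h')}\tau_{h+h'}^{-1}$ are units, and forcing this unit-valued $2$-cocycle to be trivial is a genuine obstruction you have not addressed. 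You also correctly identify that the ``immediate extension plus $\omega$-pseudo completeness'' route to surjectivity fails, since the approximating pseudo-Cauchy sequence for $x\in K\setminus K_0$ need not have countable cofinality. The fix is simply to discard the embedding entirely: keep only item (i) from your last paragraph and invoke \cite[Theorem~1]{AxK65} directly, exactly as the paper does.
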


From the previous theorem, it follows that the valued field $\mathbb{Q}_{p}(t^{H},\beta)_{\aleph_{0}}$ is elementarily equivalent to $\mathbb{Q}_{p}$. This can be proved also using a different argument as shown in the following proposition.

\begin{prop} \label{elemequiv} The valued field $\mathbb{Q}_{p}(t^{H},\beta)_{\delta}$ is formally $p$-adic. \end{prop}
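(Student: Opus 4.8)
The plan is to show that $\mathbb{Q}_{p}(t^{H},\beta)_{\delta}$ satisfies the Ax--Kochen axioms for formally $p$-adic fields, i.e. that it has mixed characteristic $(0,p)$, value group a $\mathbb{Z}$-group, residue field $\mathbb{F}_{p}$, that $val_p(p)$ is the minimal positive element of the value group, and that it is henselian. Most of these are immediate from the construction in Section \ref{twisted}. First I would observe that the residue field of $\mathbb{Q}_{p}(t^{H},\beta)_{\delta}$ is the residue field of $\mathbb{Q}_{p}$, namely $\mathbb{F}_{p}$: an element $\sum_{h\in S}a_h t^h$ has non-negative valuation exactly when $h_0=\min S\geq 0$, and when $h_0=0$ its residue is the residue of $a_{h_0}\in\mathbb{Q}_p$; conversely $t^h$ for $h>0$ lies in the maximal ideal. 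Next, by Proposition \ref{valuation} the value group is $G_{\beta}$, which by Lemma \ref{lemmagroup} and Fact \ref{isom} is isomorphic to $G$, a $\mathbb{Z}$-group; and $val_p(p)=(v_p(p),0)=(1,0)$, which is the minimal positive element of $G_{\beta}$ since the minimal positive element of the $A=\mathbb{Z}$ component is $1$ and elements with $h=0$ and positive $\mathbb{Z}$-part precede all elements with $h>0$ in the lexicographic order. The mixed characteristic $(0,p)$ is clear since $\mathbb{Q}_p\subseteq \mathbb{Q}_{p}(t^{H},\beta)_{\delta}$ has characteristic $0$ and the residue field has characteristic $p$.

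The remaining point is henselianity, and here I would invoke Theorem \ref{pseudocompleteness}: since $\mathbb{Q}_p$ is $\omega$-pseudo complete (being the completion of $\mathbb{Q}$, it has no proper immediate extension, equivalently it is maximally complete as a discretely valued field, hence $\omega$-pseudo complete), the field $\mathbb{Q}_{p}(t^{H},\beta)_{\delta}$ is $\omega$-pseudo complete, and by the remark following that theorem it is henselian. Having verified all the defining properties, the Ax--Kochen axiomatization of the theory of formally $p$-adic fields (the complete axiomatization recalled in the Introduction: value group a $\mathbb{Z}$-group, residue field $\mathbb{F}_p$, $v(p)$ minimal positive, plus henselianity) gives that $\mathbb{Q}_{p}(t^{H},\beta)_{\delta}\equiv\mathbb{Q}_p$, i.e. it is formally $p$-adic.

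I expect the only genuinely delicate point to be the bookkeeping around the value group: one must be careful that the embedding $\mathbb{Z}\hookrightarrow G$ used to form $H=G/\mathbb{Z}$ is the elementary one (so that $\mathbb{Z}$ is convex and $H$ is well defined as an ordered group), and that the identification $G_\beta\cong G$ from Fact \ref{isom} is as \emph{ordered} groups, so that "$val_p(p)$ minimal positive'' transfers correctly; the lexicographic order on $G_\beta=\mathbb{Z}\times H$ makes this transparent. The verification that $\mathbb{Q}_p$ itself is $\omega$-pseudo complete is standard (it is the completion of a discretely valued field, hence spherically/maximally complete in the discrete setting) and is already implicitly used in the corollary preceding this proposition, so I would simply cite it.
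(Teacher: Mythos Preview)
Your argument is correct, but it follows a different route from the paper's own proof. You verify the Ax--Kochen axioms for formally $p$-adic fields directly: residue field $\mathbb{F}_p$, value group $G_\beta\cong G$ a $\mathbb{Z}$-group, $val_p(p)$ minimal positive, and henselianity via Theorem~\ref{pseudocompleteness}. This is a clean and self-contained application of the mixed-characteristic Ax--Kochen principle, using only facts already established in Section~\ref{twisted}.

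The paper instead argues indirectly through coarsening. It passes to the coarse valuation on $\mathbb{Q}_p(t^H,\beta)_\delta$ with values in $H$, which is an equicharacteristic $0$ valued field with residue field $\mathbb{Q}_p$; compares it via Ax--Kochen/Ershov in characteristic $(0,0)$ to the ordinary Hahn field $\mathbb{Q}_p((t^H))$, itself the coarsening of an elementary extension of $\mathbb{Q}_p$; deduces elementary equivalence of the underlying \emph{fields}; and then upgrades to elementary equivalence of \emph{valued} fields by invoking Julia Robinson's existential definability of the valuation ring. Your approach is shorter and avoids the detour through definability. The paper's approach, on the other hand, is the template reused later for $C(k)(\pi)(t^H,\beta)_\delta$, where the definability input is supplied by \cite{CDLM13} rather than Robinson's formula; so the coarsening-plus-definability argument is chosen in part because it transfers verbatim to the general finitely ramified case.
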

\begin{proof} We consider the conventional Hahn-construction on $\mathbb{Q}_{p}$ with values in $H$ and residue field $\mathbb{Q}_p$. This is the coarsening of an elementary extension of $\mathbb{Q}_{p}$, with values in the $\mathbb{Z}$-group $H\times\mathbb{Z}$, given by the composition of the $t$-adic and the $p$-adic valuation as in the examples of \cite[Section 7]{ADM1}. By Ax-Kochen/Ershov principle in the $(0,0)$ characteristic case, it is elementarily equivalent to the coarsening of the valuation defined over $\mathbb{Q}_{p}(t^{H},\beta)$ with values in $\mathbb{Z}\times H$. Then, $\mathbb{Q}_{p}(t^{H},\beta)$ and $\mathbb{Q}_{p}$ are elementarily equivalent as fields. Moreover $\mathbb{Q}_{p}(t^{H},\beta)$ is an henselian mixed characteristic valued field with $v(p)=(1,0)$, the minimal positive element of the value group. Thus, its valuation ring is existentially definable by Julia Robinson's formula. It follows that $\mathbb{Q}_{p}(t^{H},\beta)$ is formally $p$-adic.\end{proof}

\section{A general characterization}
In this section, we extend the characterization given in Theorem \ref{AKchar} to any finitely ramified henselian valued field of mixed characteristic with fixed discrete value group $G$ and residue field $k$. 
Now we fix $k$ a field of characteristic $p\geq2$ and a discretely ordered abelian group $G$ with minimal positive element $1_{G}$. Let $A$ be the minimal convex subgroup of $G$ generated by $1_{G}$, then $A\cong\mathbb{Z}$. 
In \cite[Section 5, Theorems 3-4]{S79} and \cite{cohen46}, the authors give a characterization of complete unramified valued fields with residue field $k$  and value group $\mathbb{Z}$, respectively in the following two cases: \begin{itemize} \item as the field of fraction $W(k)$ of the ring of Witt vectors over $k$, if $k$ is perfect; \item as the field of fraction $C(k)$ of the Cohen ring over $k$, if $k$ is not perfect. \end{itemize} Since in this paper we will not generally distinguish between the two cases, we will use the same notation $C(k)$ for both. Thus, if $k$ is any field, $C(k)$ is a complete unramified valued field with values in $\mathbb{Z}$. Moreover, for every $e\geq 1$, there exists $\pi$ in the algebraic closure $\overline{C(k)}$ which is a root of an Eisenstein polynomial of degree $e$ over $C(k)$. Thus, $C(k)(\pi)$ is a complete valued field with ramification $e$, and its value group is an ordered group $A\cong\mathbb{Z}$. Let $$0\xrightarrow{}A\xrightarrow{i} G \xrightarrow{\rho} H\xrightarrow{} 0$$ be the corresponding exact sequence of ordered abelian groups, and $\beta: H\times H\longrightarrow A$ a $2$-cocycle. Then, the field $C(k)(\pi)(t^{H},\beta)$ is a mixed characteristic valued field with value group $G_{\beta}$, as we show in the following lemma.
\begin{lem} The valued field $\left(C(k)(\pi)(t^{H},\beta)_{\delta},val\right)$ has ramification index $e$ and residue field $k$. \end{lem}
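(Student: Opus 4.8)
The plan is to read off both invariants directly from the valuation $val$ on $K_{\beta}:=C(k)(\pi)(t^{H},\beta)_{\delta}$ constructed in Propositions \ref{field} and \ref{valuation}, where $K:=C(k)(\pi)$ is the totally ramified extension of $C(k)$ of degree $e$, with $v(\pi)=1$ generating $A\cong\mathbb{Z}$, and cross-section $\sigma:A\to K^{\times}$, $\sigma(m)=\pi^{m}$. Recall $val$ takes values in $G_{\beta}$, which as an ordered group has $A$ as its smallest convex subgroup and $G_{\beta}/A\cong H$; since $A\cong\mathbb{Z}$ is the minimal convex subgroup with least positive element $(1,0)$ and $H$ is the quotient, the minimal positive element of $G_{\beta}$ is $(1,0)$, and $val(p)=(v(p),0)=(e,0)$ because $C(k)$ is unramified and $C(k)(\pi)/C(k)$ has ramification $e$. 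Hence the ramification index of $K_{\beta}$, i.e. the index $[val(K_{\beta}^{\times}):val(\langle p\rangle)]$ measured against the least positive element, is $e$: the value group of $K_{\beta}$ has $(1,0)$ as least positive element and $val(p)=(e,0)=e\cdot(1,0)$.

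For the residue field, first I would identify the valuation ring $\mathcal{O}$ of $(K_{\beta},val)$: an element $f=\sum_{h\in S}a_{h}t^{h}$ lies in $\mathcal{O}$ iff $val(f)\geq(0,0)$, equivalently $h_{0}:=\min S>0$ in $H$, or $h_{0}=0$ and $v(a_{0})\geq 0$. The maximal ideal $\mathfrak{m}$ consists of those $f$ with $val(f)>(0,0)$, i.e. $h_{0}>0$, or $h_{0}=0$ and $v(a_{0})>0$. Then I would show the composite map $K\supseteq\mathcal{O}_{K}\hookrightarrow\mathcal{O}\twoheadrightarrow\mathcal{O}/\mathfrak{m}$ (sending $a\in\mathcal{O}_{K}$ to the constant series $a t^{0}$) is surjective with kernel $\mathfrak{m}_{K}$: surjectivity because any $f\in\mathcal{O}$ with $\min S=0$ differs from $a_{0}t^{0}$ by an element of $\mathfrak{m}$ (the tail has support $>0$), and if $\min S>0$ then $f\in\mathfrak{m}$; and the kernel is exactly $\{a t^{0}:v(a)>0\}=\mathfrak{m}_{K}$. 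Therefore $\mathcal{O}/\mathfrak{m}\cong\mathcal{O}_{K}/\mathfrak{m}_{K}$, which is the residue field of $K=C(k)(\pi)$. Since $C(k)(\pi)/C(k)$ is totally ramified, its residue field equals that of $C(k)$, namely $k$. This gives residue field $k$.

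The one point deserving care — and the main (mild) obstacle — is checking that the multiplication on $\mathcal{O}$ really restricts correctly and that the twisting factor $\sigma(-\beta(h,h'))=\pi^{-\beta(h,h')}$ does not disturb the residue computation: when $h,h'>0$ the product $a_{h}b_{h'}\pi^{-\beta(h,h')}t^{h+h'}$ has $H$-component $h+h'>0$ regardless of the (possibly negative) power of $\pi$, so it lands in $\mathfrak{m}$; and the constant coefficient of a product of two elements of $\mathcal{O}$ with $\min S=\min S'=0$ is $a_{0}b_{0}\pi^{-\beta(0,0)}=a_{0}b_{0}$ since $\beta(0,0)=0$ (as $\beta$ is a $2$-cochain), so the residue map is multiplicative. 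I would also note that $K_{\beta}$ is henselian — this follows from Theorem \ref{pseudocompleteness} and the remark after it, since $K=C(k)(\pi)$ is complete, hence $\omega$-pseudo complete — so $K_{\beta}$ is genuinely a finitely ramified henselian valued field of mixed characteristic $(0,p)$ with value group $G_{\beta}\cong G$ (by Fact \ref{isom}), ramification $e$, and residue field $k$, as claimed. The remaining verifications are the routine order-theoretic facts about $G_{\beta}$ already established in Lemma \ref{lemmagroup} and Fact \ref{isom}.
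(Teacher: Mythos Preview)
Your proposal is correct and follows essentially the same route as the paper: identify the valuation ring and maximal ideal of $K_{\beta}$ in terms of those of $K=C(k)(\pi)$ together with the elements of strictly positive $H$-support, quotient to obtain the residue field $\mathcal{O}_{K}/\mathfrak{m}_{K}\cong k$, and compute $val(p)=(e,0)=e\cdot(1,0)$ using $\beta(0,0)=0$. Your version is slightly more explicit in checking that the residue map is multiplicative (via $\beta(0,0)=0$) and adds the observation about henselianity, which is not needed for the lemma itself but does no harm.
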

\begin{proof} Set $$C(k)[t^{H},\beta]_{\delta}=\{x\in C(k)(t^{H},\beta)_{\delta}\,\vert\, min\,supp(x)\geq0\},$$ and denote by $(t)^{\geq0}$ the ideal of the ring $C(k)[t^{H},\beta]_{\delta}$ generated by $t$. The valuation ring of $(C(k)(t^{H},\beta)_{\delta},val)$ is \begin{align*}O_{val}&=\left\{x=\sum a_{h}t^{h}\in C(k)(t^{H},\beta)\,\vert\,val(x)\geq0\right\}=\\&=\left\{x=\sum a_{h}t^{h}\in C(k)(t^{H},\beta)_{\delta}\,\vert\,h_{0}>0\,\,\text{or}\,\,h_{0}=0\,\wedge\,v(a_{h_{0}})\geq0,\,h_{0}=min\,supp(x)\right\}=\\&=\left\{x=\sum a_{h}t^{h}\in C(k)[t^{H},\beta]_{\delta}\,\vert\,x\in(t)^{\geq0}\,\,\text{or}\,\,v(a_{0})\geq0\right\}=\\&=C[k]+(t)^{\geq0},\end{align*} where $C[k]$ is the Cohen ring over $k$ and so the valuation ring of $C(k)$. Thus, the maximal ideal of $O_{val}$ is $$M_{val}=\left\{x=\sum a_{h}t^{h}\in C(k)[t^{H},\beta]_{\delta}\,\vert\,x\in(t)^{\geq0}\,\,\text{or}\,\,v(a_{0})>0\right\}=M+(t)^{\geq0},$$ where $M$ is the maximal ideal of $C[k]$. Hence the residue field is given by $$C[k]+(t)^{\geq0}/M+(t)^{\geq0}\cong k.$$ Moreover, $val(\pi)=(1,0)$, and $$val(p)=(e,0)=e(1,0)=(1,0)+\ldots+(1,0)=(e-(e-1)\beta(0,0),0)=(e,0),$$ thus $val(p)$ is $e$ times the minimal positive element of $G_{\beta}$, since $\beta(0,0)=0$.\end{proof}
\begin{lem} Let $L$ be a finite extension of $C(k)$, then $L$ is pseudo complete.\end{lem}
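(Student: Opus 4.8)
The plan is to reduce the statement to the classical fact that a complete discretely valued field is pseudo complete, once we have checked that $L$ inherits completeness and discreteness of the value group from $C(k)$.

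First I would record the valuation-theoretic preliminaries. Since $C(k)$ is complete of rank one it is henselian, so its valuation $v$ extends uniquely to $L$; I keep the same letter $v$ for this extension. As $L$ is a finite-dimensional $C(k)$-vector space and $C(k)$ is complete with respect to $v$, the normed space $(L,v)$ is complete, because all norms on a finite-dimensional vector space over a complete rank-one valued field are equivalent and such a space is complete. Moreover $v(L^{\times})$ contains $v(C(k)^{\times})\cong\mathbb{Z}$ with finite index $e=e(L/C(k))$, hence $v(L^{\times})$ is order-isomorphic to $\mathbb{Z}$; in particular $L$ is discretely valued.

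The main step is then the observation that in a discretely valued field every pseudo-Cauchy sequence is, after deleting an initial segment, an ordinary Cauchy sequence. Indeed, if $(c_{\rho})_{\rho<\lambda}$ is pseudo-Cauchy, then past the index where the defining monotonicity begins the values $\gamma_{\rho}:=v(c_{\rho+1}-c_{\rho})$ are strictly increasing and $v(c_{\sigma}-c_{\rho})=\gamma_{\rho}$ for all $\sigma>\rho$. A strictly increasing transfinite sequence in $\mathbb{Z}$ embeds the ordinal $\lambda$ order-preservingly into a set of the form $\{m,m+1,\dots\}$, which has order type $\omega$; since $\lambda$ is a limit ordinal this forces $\lambda=\omega$ and $\gamma_{\rho}\to+\infty$, so $(c_{\rho})_{\rho<\omega}$ is Cauchy. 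By completeness it converges to some $c\in L$, and since $v(c-c_{\rho})\to+\infty$ while $v(c_{\sigma}-c_{\rho})=\gamma_{\rho}$ is constant in $\sigma>\rho$, the ultrametric inequality gives $v(c-c_{\rho})=\gamma_{\rho}$ for all large $\rho$; hence $c$ is a pseudo-limit of $(c_{\rho})$. Therefore $L$ is $\lambda$-pseudo complete for every limit ordinal $\lambda$, in particular $\omega$-pseudo complete, which is what is used together with Theorem \ref{pseudocompleteness} in the sequel.

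I do not expect a genuine obstacle here: the only point requiring care is the bookkeeping that shows a pseudo-Cauchy sequence in a $\mathbb{Z}$-valued field must have length $\omega$ and reduces to an ordinary Cauchy sequence, so that completeness upgrades to pseudo completeness. Alternatively one could simply invoke that a complete discretely valued field is maximally complete, equivalently pseudo complete, but I prefer the short self-contained argument above.
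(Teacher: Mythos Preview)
Your proof is correct and follows essentially the same approach as the paper: note that the value group of $L$ is (isomorphic to) $\mathbb{Z}$, so pseudo-Cauchy sequences reduce to Cauchy sequences, and then use that a finite extension of a complete valued field is complete. The paper simply asserts the first reduction and cites \cite[Corollary~2, p.~57]{CF67} for the second, whereas you spell out both steps (the finite-dimensional normed-space argument for completeness and the ordinal bookkeeping forcing $\lambda=\omega$); this is a matter of level of detail rather than a different strategy.
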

\begin{proof} The value group of $L$ is isomorphic to $\mathbb{Z}$, hence every pseudo-Cauchy sequence is a Cauchy sequence. So, completeness for both $L$ and $C(k)$ is equivalent to be pseudo complete. Now, the result follows from \cite[Corollary 2, p.57]{CF67}, which states that finite extensions of complete valued fields are complete with respect to the extended valuation. \end{proof}
\begin{cor} The valued field $C(k)(t^{H},\beta)_{\delta}$ is pseudo complete. \end{cor}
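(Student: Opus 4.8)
The plan is to derive this immediately from Theorem \ref{pseudocompleteness}, applied with $K = C(k)$ and $\lambda = \omega$. First I would observe that $C(k)$ itself satisfies the hypothesis of that theorem: taking $L = C(k)$ in the previous lemma shows that $C(k)$ is pseudo complete. Alternatively one can argue directly that $C(k)$ is a complete valued field with value group $\mathbb{Z} \cong A$, and that over a value group isomorphic to $\mathbb{Z}$ any pseudo-Cauchy sequence indexed by $\omega$ has strictly increasing values, which are therefore cofinal, so it is a Cauchy sequence; hence completeness and $\omega$-pseudo completeness coincide for $C(k)$.

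Next I would check that $C(k)$ fits the general setup of Section \ref{twisted} required to form $C(k)(t^{H},\beta)_{\delta}$ and to invoke Theorem \ref{pseudocompleteness}: it is a mixed characteristic $(0,p)$ valued field, its value group is the minimal convex subgroup $A \cong \mathbb{Z}$ of $G$, and, as noted after the definition of the operations on $K(t^{H},\beta)_{\delta}$, the assignment $\sigma(m) = p^{m}$ defines a cross-section $\sigma : A \longrightarrow C(k)^{\times}$ for its valuation. With these hypotheses in place, Theorem \ref{pseudocompleteness} with $\lambda = \omega$ yields that $C(k)(t^{H},\beta)_{\delta}$ is $\omega$-pseudo complete, which is the assertion.

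I do not expect any genuine obstacle: the statement is just the specialization of Theorem \ref{pseudocompleteness} to the base field $C(k)$, and all the substantive work — the pseudo-limit construction inside the twisted power series field, and the pseudo completeness of finite extensions of $C(k)$ — has already been carried out in that theorem and in the preceding lemma. The only point that needs care is to record explicitly why $C(k)$ meets the running assumptions (mixed characteristic $(0,p)$, value group equal to the minimal convex subgroup, existence of a cross-section), since Theorem \ref{pseudocompleteness} was phrased for such base fields $K$.
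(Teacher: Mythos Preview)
Your proposal is correct and follows exactly the paper's approach: the paper's proof is the single line ``It follows directly from Theorem \ref{pseudocompleteness},'' relying implicitly on the preceding lemma (with $L=C(k)$) for the pseudo completeness of the base field. Your additional remarks about $C(k)$ satisfying the standing hypotheses of Section \ref{twisted} are accurate and simply make explicit what the paper leaves tacit.
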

\begin{proof} It follows directly from Theorem \ref{pseudocompleteness}. \end{proof}
We are now in a position to generalize Ax and Kochen result for formally $p$-adic fields in \cite{AxK65i} and \cite{AxK65} to finitely ramified mixed characteristic valued fields.
First of all, we note the following
\begin{rem} \label{embeddingqp} Let $K$ be an $\omega$-pseudo complete unramified mixed characteristic valued field with residue field $k$. Then there is an embedding (not necessarily elementary) of valued fields from $C(k)$ into $K$. \end{rem}
\begin{proof} We can apply the coarsening method to $K$ and consider the core field $\mathring{K}$, that is the residue field of $K$ with respect to the coarse valuation. The core field $\mathring{K}$ is equipped with a mixed characteristic valuation with residue field $k$ and values in the smallest convex subgroup of the value group of $K$. Since $K$ is pseudo complete then clearly also the core field is pseudo complete. In particular, since the smallest convex subgroup of its value group is isomorphic to $\mathbb{Z}$, the core field is in particular complete. Then by the characterization of complete unramified valued fields with a fixed residue field, $\mathring{K}$ is isomorphic to $C(k)$, and thus $C(k)$ is a valued subfield of $K$.\end{proof}
If $K$ is finitely ramified, we have the following
\begin{lem} \label{C} Let $K$ be a pseudo complete mixed characteristic valued field with finite ramification $e\geq1$, and residue field $k$. Then $K$ contains an isomorphic copy of a finite extension of $C(k)$ of degree $e$. \end{lem}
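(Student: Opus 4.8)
The plan is to reduce to the case of a complete discretely valued field by coarsening, exactly as in the proof of Remark~\ref{embeddingqp}, and then to apply the structure theory of complete discretely valued fields of mixed characteristic. Concretely, first I would apply the coarsening method to $K$ with respect to the smallest convex subgroup $A$ of its value group $\Gamma$. Since $K$ has finite ramification $e$, the group $\Gamma$ has a least positive element $1_{\Gamma}$, we have $A=\mathbb{Z}\cdot 1_{\Gamma}\cong\mathbb{Z}$, and $v(p)=e\cdot 1_{\Gamma}$. Let $\mathring{K}$ be the core field of $K$, that is the residue field of the coarse valuation, equipped with the induced valuation $v_{0}$ with value group $A$; then $(\mathring{K},v_{0})$ is a valued field of mixed characteristic $(0,p)$ with residue field $k$ and $v_{0}(p)=e$. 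As in the proof of Remark~\ref{embeddingqp}, $\mathring{K}$ is pseudo complete (being the core field of the pseudo complete field $K$) and, its value group being isomorphic to $\mathbb{Z}$ so that pseudo-Cauchy sequences are Cauchy, it is in fact complete; moreover $\mathring{K}$ is a valued subfield of $K$. Hence it suffices to produce inside $\mathring{K}$ a copy of a degree-$e$ extension of $C(k)$.

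Next I would locate $C(k)$ inside $\mathring{K}$. The valuation ring $O$ of $(\mathring{K},v_{0})$ is a complete local ring of characteristic $0$ with residue field $k$ of characteristic $p$, so by Cohen's structure theorem (\cite{cohen46}; compare the characterization of complete unramified valued fields in \cite[Section~5, Theorems 3-4]{S79}) it contains a coefficient ring, which is necessarily the Cohen ring $C[k]$. Thus $C(k)=\mathrm{Frac}(C[k])$ embeds into $\mathring{K}$ as a valued subfield; with respect to $v_{0}$ its value group is $e\mathbb{Z}\subseteq\mathbb{Z}=v_{0}(\mathring{K}^{\times})$ (since $p$ is a uniformizer of $C(k)$ and $v_{0}(p)=e$) and its residue field is $k$, so the extension $\mathring{K}/C(k)$ has ramification index $e$ and trivial residue field extension.

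Finally I would show $[\mathring{K}:C(k)]=e$. Fix a uniformizer $\pi$ of $\mathring{K}$, so $v_{0}(\pi)=1$. Since the residue fields of $\mathring{K}$ and of $C(k)$ coincide, a standard successive-approximation argument using the completeness of $\mathring{K}$ writes every $x\in\mathring{K}$ as $\sum_{r=0}^{e-1}f_{r}\pi^{r}$ with $f_{r}\in C(k)$: repeatedly subtract from $x$ a term $c\,p^{q}\pi^{r}$ with $c\in C(k)$ matching the leading coefficient and exponent of the current remainder, and then collect the resulting convergent series according to the residue of the exponent of $\pi$ modulo $e$, using that $C(k)$ is closed in $\mathring{K}$. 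Hence $1,\pi,\dots,\pi^{e-1}$ span $\mathring{K}$ over $C(k)$; they are also $C(k)$-linearly independent, having pairwise incongruent valuations modulo $v_{0}(C(k)^{\times})=e\mathbb{Z}$, so $[\mathring{K}:C(k)]=e$ and the extension is totally ramified. Since $\mathring{K}\subseteq K$, this proves the lemma.

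The step I expect to be the main obstacle is the careful handling of the two structure-theoretic inputs — the existence of a coefficient ring isomorphic to $C[k]$ inside the valuation ring of the core field, and the convergence and reindexing in the spanning argument — together with the verification that the core field genuinely embeds into $K$ as a valued subfield, which is already used, without detailed justification, in the proof of Remark~\ref{embeddingqp}. The remaining manipulations with the coarsening are routine bookkeeping.
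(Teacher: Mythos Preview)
Your proposal is correct and follows essentially the same route as the paper: coarsen with respect to the smallest convex subgroup, identify the core field $\mathring{K}$ as a complete discretely valued field of mixed characteristic with residue field $k$ and ramification $e$, and invoke the structure theory of such fields. The paper simply cites \cite{CF67} and \cite[p.~27]{PR84} to conclude that $\mathring{K}\cong C(k)(\pi)$, whereas you unpack this citation by first embedding $C(k)$ via Cohen's theorem and then computing $[\mathring{K}:C(k)]=e$ by successive approximation; the paper additionally records that $\pi$ satisfies an Eisenstein polynomial over $C(k)$, which is equivalent to your observation that the extension is totally ramified. You are also right that the passage from ``$\mathring{K}$ is the residue field of the coarse valuation'' to ``$\mathring{K}$ is a valued subfield of $K$'' deserves a word---it is the existence of a coefficient field for the equicharacteristic~$0$ henselian valuation $\dot{v}$---and that the paper leaves this implicit both here and in Remark~\ref{embeddingqp}.
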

\begin{proof} Let $\pi\in K$ be a uniformizer. As before we can apply the coarsening method. By \cite{CF67} and \cite[p.27]{PR84}, the core field $\mathring{K}$ is isomorphic to the finite extension $C(k)(\pi)$ of the Cohen field. Thus, $C(k)(\pi)$ is a valued subfield of $K$. Moreover, since the element $\pi$ is a uniformizer for $K$, then it is the root of an Eisenstein polynomial $p(x)$ of degree $e$ with coefficients in $C(k)$. \end{proof}
Note that the assumption of $K$ being pseudo complete in Lemma \ref{C} is necessary. Indeed, as a counterexample, in \cite{BvDDM88} the authors show that for every finite extension $L$ of $\mathbb{Q}_{p}$, there exists a valued field $K$ elementarily equivalent to $L$, that is not pseudo complete, such that there is no subfield of $K$ elementarily equivalent to $\mathbb{Q}_{p}$. Moreover, notice that if $K$ is a valued field as in Lemma \ref{C}, and $K'$ is another pseudo complete mixed characteristic valued field with finite ramification $e$, residue field $k$, and uniformizer any root of $p(x)$, there is an isomorphic copy of $C(k)(\pi)$ inside $K'$. 
For simplicity of notation, we introduce the following property. 

Let $K, K'$ be two pseudo complete finitely ramified henselian valued fields. We say that $K, K'$ have the property \textbf{P} if \begin{itemize} \item $K, K'$ have same residue field $k$; \item if $\pi,\pi'$ are two uniformizers in $K, K'$ respectively, then they are conjugate, i.e. roots of the same Eisenstein polynomial $p(x)\in C(k)$.\end{itemize}
We now prove the analogue of Proposition \ref{elemequiv} for $C(k)(t^{H},\beta)_{\delta}$.
\begin{prop} The valued field $C(k)(\pi)(t^{H},\beta)_{\delta}$ is elementarily equivalent to $C(k)(\pi)$ as valued fields. \end{prop}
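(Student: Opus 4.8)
The plan is to run the argument of Proposition~\ref{elemequiv} but, since the residue field $k$ is now arbitrary, to stay at the level of valued fields throughout: the step which for $\mathbb{Q}_p$ was supplied by Julia Robinson's definition of the valuation in the pure field language has no analogue here, so the conclusion has to come from an Ax--Kochen--Ershov principle for finitely ramified henselian valued fields. First note that $C(k)(\pi)(t^{H},\beta)_{\delta}$ is henselian, by the remark following Theorem~\ref{pseudocompleteness} together with the fact that $C(k)(\pi)$, being a finite extension of $C(k)$, is pseudo complete; and by the lemma computing its ramification and residue field it is a finitely ramified valued field of mixed characteristic $(0,p)$ with residue field $k$, ramification index $e$, and $val(p)=e\cdot(1,0)$ --- precisely the invariants of $C(k)(\pi)$ itself. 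It therefore remains to match the two further invariants that govern the elementary theory: the value group and the residue rings modulo $p^{n}$.

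I would record the two relevant coincidences. On the value-group side, $G_{\beta}\cong G$ has $A\cong\mathbb{Z}$ as its minimal convex subgroup and $H=G/A$ is divisible, and an extension of a divisible torsion-free ordered group by $\mathbb{Z}$ with $\mathbb{Z}$ convex is again a $\mathbb{Z}$-group; hence $G_{\beta}$ is elementarily equivalent to $\mathbb{Z}$, the value group of $C(k)(\pi)$. On the residue-ring side, for every $h>0$ in $H$ and every $n\geq1$ one has $val(t^{h})=(0,h)>(ne,0)=val(p^{n})$, so every series of $O_{val}$ with support contained in $H^{>0}$ lies in $p^{n}O_{val}$; since $O_{val}$ is the sum of $C[k](\pi)$ and the set of such series, as in the proof of the preceding lemma, this forces $O_{val}/p^{n}O_{val}\cong C[k](\pi)/\mathfrak{m}^{ne}\cong O_{C(k)(\pi)}/p^{n}O_{C(k)(\pi)}$ for every $n$.

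With these in hand I would conclude by the Ax--Kochen--Ershov transfer principle for finitely ramified henselian valued fields of mixed characteristic: two such fields are elementarily equivalent as valued fields once their value groups are elementarily equivalent and their residue rings modulo $p^{n}$ agree for all $n$, and both conditions have just been verified for $C(k)(\pi)(t^{H},\beta)_{\delta}$ and $C(k)(\pi)$. In the style of Proposition~\ref{elemequiv} the same can be phrased through coarsening: the coarsening of $val$ by $A$ makes $C(k)(\pi)(t^{H},\beta)_{\delta}$ an equicharacteristic $0$ henselian valued field with value group $H$ and valued residue field exactly $(C(k)(\pi),v)$, as does the conventional Hahn field $C(k)(\pi)((t^{H}))$ with its $t$-adic valuation; by the Ax--Kochen--Ershov principle in the $(0,0)$ case --- in the language where the residue field carries its own valuation --- these coarse structures are elementarily equivalent, and since the fine valuation ring is $\emptyset$-definable as the preimage of $O_{v}$ under the coarse residue map, $C(k)(\pi)(t^{H},\beta)_{\delta}$ is elementarily equivalent to $C(k)(\pi)((t^{H}))$ with the composite valuation, which is in turn elementarily equivalent to $(C(k)(\pi),v)$ by the finitely ramified transfer principle (its value group $\mathbb{Z}\oplus H$ being again a $\mathbb{Z}$-group, and its residue rings modulo $p^{n}$ again $O_{C(k)(\pi)}/p^{n}$).

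The main obstacle is exactly this appeal to an Ax--Kochen--Ershov principle for finitely ramified fields with an arbitrary, possibly imperfect, residue field: unlike the formally $p$-adic case, where $v(p)=1$ and the value group is automatically a $\mathbb{Z}$-group, the whole argument hinges on the divisibility of $H$ --- which is where the standing hypothesis on $G$ enters --- and on having the transfer principle for the full $\bmod\,p^{n}$ leading-term structure rather than for the residue field alone; the computation $t^{h}\in p^{n}O_{val}$ above is what guarantees that this structure agrees with that of $C(k)(\pi)$ and hence that the principle applies.
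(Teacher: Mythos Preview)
Your argument is correct, and both routes you sketch are valid; the paper, however, proceeds differently. It repeats the template of Proposition~\ref{elemequiv} verbatim: coarsen both $C(k)(\pi)(t^{H},\beta)_{\delta}$ and the ordinary Hahn field $C(k)(\pi)((t^{H}))$ to equicharacteristic~$0$ henselian valued fields with value group $H$ and residue field $C(k)(\pi)$, apply the Ax--Kochen--Ershov principle in the $(0,0)$ case to get elementary equivalence as \emph{fields}, and then recover elementary equivalence as \emph{valued} fields by citing a definability result for the valuation ring in the pure ring language (the reference is \cite{CDLM13}, playing the role Julia Robinson's formula played for $\mathbb{Q}_p$); it then records that the two fields satisfy property~\textbf{P}, which is what distinguishes elementarily inequivalent finitely ramified fields with the same residue field and value group.

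So your opening claim that the Julia Robinson step ``has no analogue here'' is not what the paper asserts: the paper does invoke ring-language definability of the valuation for the general residue field $k$. Your scepticism is not unreasonable---such definability results typically require hypotheses on $k$---but it is a point of genuine disagreement with the paper rather than a faithful reconstruction. What your approach buys is independence from any such definability hypothesis: by invoking the Ax--Kochen--Ershov transfer for finitely ramified henselian fields directly, and by your explicit computation $O_{val}/p^{n}O_{val}\cong O_{C(k)(\pi)}/p^{n}O_{C(k)(\pi)}$ (which in effect verifies property~\textbf{P} at the level of residue rings), you stay entirely in the language of valued fields and never need the valuation to be ring-definable. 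The paper's route is shorter and keeps the parallel with Proposition~\ref{elemequiv} visible, at the cost of relying on \cite{CDLM13}; yours is more self-contained. Your flag that the argument needs $H$ divisible (equivalently, $G$ a $\mathbb{Z}$-group) is well placed: the paper's proof needs it too, since otherwise neither $G_{\beta}$ nor the value group of $C(k)(\pi)((t^{H}))$ is elementarily equivalent to $\mathbb{Z}$.
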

\begin{proof} We use the same argument of Proposition \ref{elemequiv} for formally $p$-adic fields. Indeed, we have the assert by coarsening, AKE principle in the characteristic $(0,0)$ case, and the definability of the valuation (see \cite{CDLM13}). Moreover, $C(k)(\pi)(t^{H},\beta)_{\delta}$ and $C(k)(\pi)$ have property \textbf{P}, which ensures the assert in the case of finite ramification (see \cite[Example 12.8]{AJ19cohen} for a counterexample of mixed characteristic valued field with finite ramification and not having property \textbf{P} that are not elementarily equivalent).  \end{proof}

We need the following lemmas in order to prove Theorem \ref{mainthm}.
\begin{lem} \label{relalgclosed} Let $K$ be a mixed characteristic pseudo complete valued field with finite ramification. If $F$ 
is a henselian valued subfield of $K$ with no immediate extensions in $K$ and its value group $\Gamma_{F}$ is a pure countable subgroup of $\Gamma_{K}$, then $F$ is relatively algebraically closed in $K$.\end{lem}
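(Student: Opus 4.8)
The plan is to argue by contradiction: suppose $F$ is not relatively algebraically closed in $K$, so there is an element $\theta\in K\setminus F$ that is algebraic over $F$. Let $E=F(\theta)$, a finite proper algebraic extension of $F$ inside $K$, equipped with the restriction of the valuation of $K$. Since $F$ is henselian, $E$ is henselian as well, and the value group $\Gamma_E$ and residue field $\bar E$ satisfy the fundamental inequality $[E:F]\ge [\Gamma_E:\Gamma_F]\cdot[\bar E:\bar F]$, with equality exactly when $E/F$ is defectless. I would first dispose of the two ``obvious'' sources of a proper extension. Because $\Gamma_F$ is assumed to be a \emph{pure} subgroup of $\Gamma_K$ and $\Gamma_K$ is finitely ramified (hence its smallest convex subgroup is $\mathbb Z$ and $\Gamma_K/\mathbb Z$ is torsion-free, so $\Gamma_K$ has no nontrivial torsion in any quotient relevant here), the quotient $\Gamma_E/\Gamma_F$ injects into $\Gamma_K/\Gamma_F$, which is torsion-free by purity; but $\Gamma_E/\Gamma_F$ is a torsion group since $[E:F]<\infty$. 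Hence $\Gamma_E=\Gamma_F$. For the residue field, I would use finite ramification of $K$: the residue field of $K$, and hence of every subfield, has characteristic $p$, and one shows $\bar E=\bar F$ as well — if $\bar E$ were a proper extension of $\bar F$, one could lift a generator through henselianity to produce an unramified immediate-type extension of $F$ inside $K$ contradicting that $F$ has no immediate extensions in $K$; more carefully, any residue extension combined with no value-group extension would make $E/F$ an immediate extension once we pass to a subextension realizing the residue extension, again contradicting the hypothesis.

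With $\Gamma_E=\Gamma_F$ and $\bar E=\bar F$ established, $E/F$ is an immediate extension of valued fields. But $F$ by hypothesis has no proper immediate extension inside $K$, so $E=F$, contradicting $\theta\notin F$. This is the skeleton; the real content is making the residue-field step and the ``immediate extension'' step precise in the finitely ramified mixed-characteristic setting.

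The main obstacle I anticipate is the residue-field step: ruling out a proper residue extension $\bar E/\bar F$ while only knowing that $\Gamma_F$ is pure and countable in $\Gamma_K$ and that $F$ is henselian with no immediate extensions in $K$. The clean way is to interpose a field: let $E_0\subseteq E$ be the subextension of $F$ inside $E$ that is unramified and realizes the maximal separable residue subextension of $\bar E/\bar F$; by henselianity of $F$ this $E_0$ can be built by adjoining roots of monic lifts of the minimal polynomials of residue generators, and then $E_0/F$ has $\Gamma_{E_0}=\Gamma_F$ (no new value-group elements) and residue field strictly larger than $\bar F$. Now the point is that $E_0/F$ is \emph{not} immediate yet is ``generated at the residue level,'' so I cannot directly invoke the no-immediate-extensions hypothesis; instead I must argue that an unramified residue extension inside a finitely ramified $K$ forces $\bar E_0$ to already live in $\bar F$, e.g. because $F$, being henselian and (via Remark~\ref{embeddingqp} / Lemma~\ref{C} applied to the pseudo complete $K$) containing a copy of $C(\bar K)$ with $\bar K$ the residue field of $K$, already contains the full unramified part over its residue field — so any residue element of $K$ algebraic over $\bar F$ is the residue of an element of $F$. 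Thus $\bar E=\bar F$, and we are reduced to the immediate case, where the hypothesis applies verbatim. I would also double-check that purity of $\Gamma_F$ is used exactly where claimed (to kill value-group growth), and that countability of $\Gamma_F$, though part of the hypothesis, is not actually needed for this particular conclusion — it is presumably there for later applications.
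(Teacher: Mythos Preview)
Your overall strategy is the paper's: assume $F$ has a proper finite extension $E$ inside $K$, use purity of $\Gamma_F$ in $\Gamma_K$ to force $\Gamma_E=\Gamma_F$, conclude $E/F$ is immediate, and contradict the hypothesis. The paper's proof is literally that one line; it does not separate out a residue-field step at all.

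Where you go wrong is in the residue-field discussion. An extension $E_0/F$ with $\Gamma_{E_0}=\Gamma_F$ but $\bar E_0\supsetneq\bar F$ is by definition \emph{not} immediate, so the hypothesis ``$F$ has no immediate extensions in $K$'' says nothing against it; both of your attempts to invoke that hypothesis here fail for this reason. Your fallback --- that $F$ must already contain a copy of $C(\bar K)$ via Lemma~\ref{C} --- is not justified by the hypotheses of the present lemma: Lemma~\ref{C} embeds $C(k)(\pi)$ into the pseudo complete field $K$, not into an arbitrary henselian subfield $F$ of it.

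The way this is handled in the paper is contextual rather than internal to the lemma: in every use of Lemma~\ref{relalgclosed} (Lemmas~\ref{ax1} and~\ref{prop3ak2}, Theorem~\ref{mainthm}) the subfield $F$ is chosen to contain $C(k)(\pi)$, so $\bar F=k=\bar K$ and no extension inside $K$ can enlarge the residue field. With that standing assumption, purity alone gives immediacy and the one-line argument is complete. Your observation that countability of $\Gamma_F$ is unused in this lemma is correct.
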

\begin{proof} Assume $F$ has a proper finite extension in $K$. Then, by 
purity of $\Gamma_{F}$ into $\Gamma_{K}$, this extension is immediate. This contradicts the hypothesis.\end{proof}

On the other hand, we have the following
\begin{prop} \label{rac-hens} Let $(K, v_{K})$ be a henselian mixed characteristic valued field with finite ramification. 
If $F$ is a relatively algebraically closed subfield of $K$, then $F$ is henselian. \end{prop}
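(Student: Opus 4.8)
The plan is to verify henselianity of $F$ directly by transporting solutions down from $K$ via Hensel's Lemma. I will use the standard characterization: a valued field is henselian precisely when every monic polynomial over its valuation ring whose reduction has a \emph{simple} zero in the residue field admits a root in the valuation ring lifting that zero. So I would begin with a monic $f\in O_F[X]$ together with $\bar{\alpha}\in k_F$ such that $\bar{f}(\bar{\alpha})=0$ and $\bar{f}'(\bar{\alpha})\neq 0$, and try to produce a lift of $\bar{\alpha}$ inside $O_F$.

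The first step is routine bookkeeping about the restricted valuation: since $F$ carries $v_K\restriction F$, we have $O_F=O_K\cap F$, $M_F=M_K\cap O_F$, and the residue map $O_F\to k_F$ is the restriction of $O_K\to k_K$, so $k_F$ embeds canonically into $k_K$. Hence $f\in O_K[X]$, and regarding $\bar{f}\in k_K[X]$ and $\bar{\alpha}\in k_K$, the zero $\bar{\alpha}$ is still simple there. Applying henselianity of $K$, there is $\alpha\in O_K$ with $f(\alpha)=0$ whose residue in $k_K$ equals $\bar{\alpha}$.

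Now the hypothesis on $F$ enters: $\alpha$ is a root of the nonzero polynomial $f\in F[X]$, hence algebraic over $F$, and $\alpha\in K$; since $F$ is relatively algebraically closed in $K$, this forces $\alpha\in F$. Together with $\alpha\in O_K$ this gives $\alpha\in O_K\cap F=O_F$, and by compatibility of the residue maps the residue of $\alpha$ in $k_F$ is $\bar{\alpha}$. Thus $f$ has the desired root in $O_F$ lifting $\bar{\alpha}$, so $F$ is henselian (the case where $v_K\restriction F$ is trivial is harmless, as trivially valued fields are henselian).

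I do not expect a genuine obstacle here; the only points to be careful about are picking a clean form of Hensel's Lemma and keeping the residue-field identifications straight. As an alternative, one can argue via the henselization: the universal property of $(F^{h},v^{h})$ yields an $F$-embedding of the valued field $F^{h}$ into the henselian field $K$, and since $F^{h}/F$ is algebraic while $F$ is relatively algebraically closed in $K$, we get $F^{h}=F$. Either route in fact makes no use of mixed characteristic or finite ramification; these hypotheses are carried over from the ambient setting of this section (where the proposition is paired with Lemma \ref{relalgclosed}) and are not needed for this implication.
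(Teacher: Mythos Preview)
Your proof is correct, and so is the alternative via the henselization that you sketch at the end. The paper, however, argues differently: it uses the characterization of henselianity as \emph{uniqueness of extensions of the valuation to finite extensions}. Concretely, the paper assumes there are two distinct extensions $v_{1},v_{2}$ of $v_{F}$ to some $F(\alpha)$ with $\alpha\in\overline{F}$, pushes them up to valuations on $K(\alpha)$ restricting to $v_{K}$ on $K$, and obtains a contradiction with henselianity of $K$. Your argument instead stays entirely inside $K$: you lift a simple residue root using Hensel's Lemma in $K$ and then pull the lift back into $F$ by relative algebraic closedness. This avoids the (mildly delicate) step of extending $v_{1},v_{2}$ compatibly from $F(\alpha)$ to $K(\alpha)$, and it makes transparent why the extra hypotheses (mixed characteristic, finite ramification) play no role, as you note.
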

\begin{proof} 
Let $v_{F}$ be the valuation induced by $v_{K}$ to $F$. We prove that $v_{F}$ extends uniquely to any finite extension. Let $\alpha$ be an element in the algebraic closure $\overline{F}$ of $F$, and $v_{1}, v_{2}$ be two extensions of $v_{F}$ to $F(\alpha)$ such that $v_{1}(\alpha)\neq v_{2}(\alpha)$. Since $F$ is relatively algebraically closed in $K$, the field $K(\alpha)$ is an algebraic extension of $K$. Then $v_{1}$ and $v_{2}$ extend to two valuations $v_{1}'$ and $v_{2}'$ over $K(\alpha)$ such that $v_{1}'(\alpha)\neq v_{2}'(\alpha)$ and $v^{'}_{1\restriction_{K}}=v^{'}_{2\restriction_{K}}=v_{K}$. This contradicts henselianity of $K$.\end{proof}
\begin{lem} \label{lem-prop1} Let $(K, v_{K})$ be a mixed characteristic valued field with finite ramification. Let $F$ be a relatively algebraically closed subfield of $K$. If $\alpha\in\overline{F}$ and $y\in K$, then there exists $r\in F$ such that $v(r-y)\geq v(\alpha-y)$, where $v$ is the extension of $v_{K}$ to a proper algebraic extension.\end{lem}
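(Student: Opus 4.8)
The plan is to reduce the statement to the henselianity of $K$ together with relative algebraic closedness of $F$, exactly in the spirit of Proposition \ref{rac-hens} and Lemma \ref{lem-prop1}'s hypotheses. First I would fix $\alpha\in\overline{F}$ and $y\in K$, and let $v$ denote the (unique, by henselianity of $K$ and finiteness of the ramification — cf.\ the argument in Proposition \ref{rac-hens}) extension of $v_{K}$ to $\overline{F}\cdot K$, so that $v(\alpha-y)$ is well defined. If $\alpha\in F$ there is nothing to prove, so assume $[F(\alpha):F]=n>1$. The goal is to produce $r\in F$ with $v(r-y)\ge v(\alpha-y)$.

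The key step is to consider the conjugates of $\alpha$ over $F$. Let $\alpha=\alpha_{1},\alpha_{2},\ldots,\alpha_{n}$ be the $F$-conjugates of $\alpha$ inside a normal closure, and extend $v$ to that normal closure. For each $F$-automorphism $\tau$ of the normal closure fixing $F$ we have $v(\tau(\alpha)-\tau(y))=v(\tau(\alpha-y))$; but here $y\in K$ and $\tau$ need not fix $K$, so I would instead argue with the specific extension $v$ as follows. Take $r$ to be (a symmetric function of the conjugates lying in $F$, e.g.) the trace-type average $r=\frac{1}{n}\sum_{i=1}^{n}\alpha_{i}$ when $\mathrm{char}$ permits division by $n$, or more robustly the nearest element of $F$ to $y$ in the sense of the set $\{v(s-y):s\in F\}$. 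The cleanest route: the set $\Delta=\{v(s-y)\mid s\in F\}\subseteq\Gamma_K\cup\{\infty\}$ is the set of "distances from $y$ to $F$"; if it has no maximum then some $s\in F$ already satisfies $v(s-y)\ge v(\alpha-y)$ and we are done, so assume $\Delta$ has a maximum $\gamma_0=v(r_0-y)$ for some $r_0\in F$. Replacing $y$ by $y-r_0$ we may assume $\gamma_0=v(y)$ is maximal, i.e.\ $v(y)\ge v(s-y)$ for all $s\in F$; equivalently $v(s)\le v(y)$ whenever $v(s)\ne v(y)$ — in particular $v(s-y)=\min\{v(s),v(y)\}$ forces $v(s)\le v(y)$ for all $s\in F$, so $0\in F$ realizes the max and it suffices to show $v(\alpha-y)\le v(y)$, i.e.\ $v(\alpha)\le v(y)$ or $v(\alpha-y)\le v(y)$.

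To finish I would suppose toward a contradiction that $v(\alpha-y)>\gamma_0=v(y)$. Then $\alpha$ lies in the (open) ball of radius $>\gamma_0$ around $y$, a ball containing no point of $F$. Now apply an $F$-automorphism $\tau$ of the normal closure to all conjugates: the elements $\tau(\alpha)$ run through $\alpha_1,\ldots,\alpha_n$, while the images $\tau(y)$ all lie in the fixed set $K$ (using that $F$ is relatively algebraically closed in $K$, so $K$ is linearly disjoint from the normal closure of $F(\alpha)$ over $F$, and the conjugation restricts trivially... this is the delicate point). From $v(\alpha_i-\tau_i(y))>\gamma_0$ for suitable $\tau_i$ one gets, by the ultrametric inequality applied to $\alpha_i-\alpha_j=(\alpha_i-\tau_i(y))-(\alpha_j-\tau_j(y))+(\tau_i(y)-\tau_j(y))$ together with $v(\tau_i(y)-\tau_j(y))\ge\gamma_0$, that any symmetric polynomial in the $\alpha_i$ (hence an element of $F$) lands in the ball of radius $>\gamma_0$ around a corresponding symmetric polynomial in the $\tau_i(y)\in K$ — and taking the minimal polynomial coefficient $c=\pm\sum\alpha_i\in F$ one obtains $v(c-(\text{elt of }K))>\gamma_0$ with the $K$-element being itself close to $ny$; when $n$ is prime to $p$ this yields $s\in F$ with $v(s-y)>\gamma_0$, contradicting maximality. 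The main obstacle is precisely controlling how the $F$-conjugation interacts with the element $y\in K$: one must use relative algebraic closedness of $F$ in $K$ to guarantee that conjugating $\alpha$ over $F$ does not "move" $y$ in a way that destroys the valuation estimate, i.e.\ that the extension $v$ to $K(\alpha_1,\ldots,\alpha_n)$ restricts to $v_K$ on $K$ under each relevant embedding. I expect this to be handled by the uniqueness of the extension of $v_K$ to the finite extension $K(\alpha)$ (henselianity plus finite ramification, as in Proposition \ref{rac-hens}), which pins down $v(\alpha-y)$ and forces the symmetric-function estimate to descend to $F$.
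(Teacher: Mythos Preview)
Your route is genuinely different from the paper's, and it has a real gap.

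The paper does not use conjugates or traces at all. Following Ax--Kochen, it exploits a \emph{separated basis}: since $F$ is henselian (Proposition~\ref{rac-hens}), one writes $F(\alpha)=F(\xi,\pi')$ with $\xi$ generating the residue extension and $\pi'$ a uniformizer, and the monomials $\theta_{i,j}=\xi^{i}\pi'^{j}$ form an $F$-basis with the key property $v\bigl(\sum c_{i,j}\theta_{i,j}\bigr)=\min_{i,j}v(c_{i,j}\theta_{i,j})$. Because $F$ is relatively algebraically closed in $K$, the same $\theta_{i,j}$ are a separated $K$-basis of $K(\xi,\pi')$; writing $\alpha-y=\sum_{i,j}(f_{i,j}-d_{i,j}y)\theta_{i,j}$ with $d_{0,0}=1$ and $d_{i,j}=0$ otherwise gives at once $v(\alpha-y)\le v(f_{0,0}-y)$, so $r:=f_{0,0}\in F$ works. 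No case distinction on $p$ is needed.

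Your conjugate idea can be tightened, but only partially. The clean way to run it is: relative algebraic closedness of $F$ in $K$ makes the $F$-conjugates $\alpha_{1},\dots,\alpha_{n}$ of $\alpha$ coincide with its $K$-conjugates, and henselianity of $K$ (not of $F$) forces $v(\alpha_{i}-y)=v(\alpha-y)$ for every $i$, since the relevant embeddings fix $K$ pointwise and preserve $v$. So you should never write $\tau_i(y)$ at all; $y$ stays put. This yields $v\bigl(\mathrm{Tr}_{F(\alpha)/F}(\alpha)-ny\bigr)\ge v(\alpha-y)$ with $\mathrm{Tr}(\alpha)\in F$. When $p\nmid n$ you may divide by $n$ and set $r=\tfrac{1}{n}\mathrm{Tr}(\alpha)$. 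The gap is the case $p\mid n$: then $v(n)>0$ and the inequality $v(\mathrm{Tr}(\alpha)-ny)\ge v(\alpha-y)$ no longer controls $v(\tfrac{1}{n}\mathrm{Tr}(\alpha)-y)$. You flag this but do not repair it, and there is no uniform symmetric-function substitute---this is precisely why Ax and Kochen introduced the separated-basis argument in mixed characteristic. As written, your proof is incomplete for extensions of degree divisible by $p$, which certainly occur under the hypotheses.
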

\begin{proof} Let $\alpha\in\overline{F}$, then $[F(\alpha):F]=m$ for some $m>1$, and $m=e'f'$, for some $e',f'\geq1$, since $F$ is henselian by Proposition \ref{rac-hens}. Then by 
the characterizations of unramified and totally ramified extensions (see \cite[Chapter I]{CF67}), if $k$ is the residue field of $F$, there exist $\xi\in \overline{k}$ and $\pi'$ a uniformizer for $F(\alpha)$, such that $F(\alpha)=F(\xi,\pi')$. Set $\theta_{i,j}=\xi^{i}\pi^{j}$ for $i=0,\ldots,f'-1$ and $j=0,\ldots,e'-1$. Then, $\theta_{i,j}$'s form a basis of $F(\alpha)$ over $F$, and so there are $f_{i,j}\in F$ such that $$\alpha=\sum_{i,j}f_{i,j}\theta_{i,j}.$$ By a standard argument (see \cite[p. 57]{S79}), for every $f_{i,j}\in F$ we have $$v_{F(\xi,\pi')}(\sum_{i,j}f_{i,j}\theta_{i,j})=min_{i,j}\{v_{F(\xi,\pi')}(f_{i,j}\theta_{i,j})\},$$ that means that $\theta_{i,j}$'s form a separated basis for the extension $F(\xi,\pi')/F$
. Then, the proof follows as in \cite[Proposition 1]{AxK65i}. Indeed, setting $$d_{i,j}:=\left \{ \begin{array}{rl}
1\,\,\,\,\,, & \text{if}\,\,i=j=0;\\
0\,\,\,\,\,, & \,\,\text{otherwise.}
\end{array}
\right.$$ we have $$\alpha-y=\sum_{i,j}(f_{i,j}-d_{i,j}y)\theta_{i,j}.$$ Now, let $v$ be the extension of $v_{K}$ to $K(\xi,\pi')$. Then, we have $$v(\alpha-y)=min_{i,j}((f_{i,j}-d_{i,j}y)\theta_{i,j})\leq v(f_{0,0}-d_{0,0}y)=v(f_{0,0}-y).$$ Thus, $r:=f_{0,0}$ is an element of $F$ with the required property.  \end{proof}

\begin{lem} \label{ax1} Let $K$ be a pseudo complete value field with finite ramification $e\geq1$, and with value group $\Gamma_{K}$. 
If $F$ is a valued subfield of $K$ such that 
$F$ has no immediate extensions in $K$ and its value group $\Gamma_{F}$ is a pure countable subgroup of $\Gamma_{K}$, then $F$ is pseudo complete. \end{lem}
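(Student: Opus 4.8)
The plan is to show that every pseudo-Cauchy sequence in $F$ has a pseudo-limit in $F$, arguing by contradiction, after first upgrading the hypotheses. First I would note that $F$ is henselian: a pseudo complete valued field is henselian, so the henselization $F^{h}$, being an immediate extension of $F$, embeds into $K$ over $F$, and since $F$ has no immediate extensions in $K$ this forces $F^{h}=F$. As $\Gamma_{F}$ is moreover a pure countable subgroup of $\Gamma_{K}$ and $F$ has no immediate extensions in $K$, Lemma \ref{relalgclosed} then yields that $F$ is relatively algebraically closed in $K$ --- exactly the hypothesis needed to invoke Lemma \ref{lem-prop1} below.

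Next I would fix, for contradiction, a pseudo-Cauchy sequence $(a_{\rho})_{\rho<\lambda}$ in $F$ with no pseudo-limit in $F$, and let $(\gamma_{\rho})_{\rho<\lambda}$ be its associated strictly increasing sequence of values, so that $v(a_{\rho''}-a_{\rho'})=\gamma_{\rho'}$ whenever $\rho'$ is large and $\rho''>\rho'$, and so that an element $w$ of any valued extension of $F$ is a pseudo-limit of $(a_{\rho})_{\rho<\lambda}$ precisely when $v(a_{\rho}-w)=\gamma_{\rho}$ for all large $\rho$. Since $K$ is pseudo complete, the sequence has a pseudo-limit $y\in K$, and $y\notin F$ by assumption. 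Now I would split according to Kaplansky's dichotomy for pseudo-Cauchy sequences without a pseudo-limit in $F$ (\cite{Kaplansky42}). If $(a_{\rho})_{\rho<\lambda}$ is of transcendental type, then $y$ is transcendental over $F$ and $F(y)/F$ is immediate; but then $F\subsetneq F(y)\subseteq K$ would be a proper immediate extension of $F$ inside $K$, contradicting the hypothesis.

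The remaining, genuinely delicate case is when $(a_{\rho})_{\rho<\lambda}$ is of algebraic type, and it is here that finite ramification enters, since the immediate algebraic extension attached to the sequence need not be realizable inside $K$. Here I would take $q\in F[x]$ of least degree with $v(q(a_{\rho}))$ not eventually constant and, by Kaplansky's theorem, a root $z\in\overline{F}$ of $q$ that is a pseudo-limit of $(a_{\rho})_{\rho<\lambda}$; if $z\in F$ we are done, so assume $z\notin F$ (hence $z\notin K$, by relative algebraic closedness). Since $y$ and $z$ are both pseudo-limits, the ultrametric inequality gives $v(z-y)\geq\gamma_{\rho}$ for all large $\rho$, and therefore $v(z-y)>\gamma_{\rho}$ for every $\rho$ by strict monotonicity of $(\gamma_{\rho})_{\rho<\lambda}$. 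Applying Lemma \ref{lem-prop1} with $\alpha:=z$ and the point $y\in K$ produces $r\in F$ with $v(r-y)\geq v(z-y)>\gamma_{\rho}$ for all $\rho$; then for all large $\rho$ one has $v(a_{\rho}-r)=\min\{v(a_{\rho}-y),v(y-r)\}=\gamma_{\rho}$, so $r\in F$ is a pseudo-limit of $(a_{\rho})_{\rho<\lambda}$, the desired contradiction. I expect this algebraic case to be the main obstacle: one must approximate a pseudo-limit living in $\overline{F}\setminus K$ by an element of $F$, and Lemma \ref{lem-prop1} --- which rests on the separated-basis description of finite extensions of a finitely ramified henselian valued field --- is precisely the tool that makes this possible.
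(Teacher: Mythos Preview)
Your argument is correct and follows essentially the same strategy as the paper: establish that $F$ is relatively algebraically closed in $K$ (you helpfully make the henselianity step explicit, which the paper's invocation of Lemma \ref{relalgclosed} leaves implicit), then use Lemma \ref{lem-prop1} to control how close algebraic elements over $F$ can get to the pseudo-limit $y\in K$. The only organizational difference is that the paper argues contrapositively---assuming no $f\in F$ satisfies $v(f-y)>\gamma_{\rho}$ for all $\rho$, it shows directly that $v(c_{i}-\alpha)$ is eventually constant for \emph{every} $\alpha\in\overline{F}$, forcing the sequence to be of transcendental type and hence $F(y)/F$ immediate---whereas you invoke Kaplansky's dichotomy and, in the algebraic case, single out the particular root $z$ which is a pseudo-limit before applying Lemma \ref{lem-prop1}; the two routes are logically equivalent reorganizations of the same idea.
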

\begin{proof} Using Lemma \ref{C} and \ref{lem-prop1}, we can now adapt the proof of \cite[Proposition 2]{AxK65i}, to the field $K$. We need to show that every pseudo-Cauchy sequence of $F$ has a limit in $F$. Since $\Gamma_{F}$ is countable, it suffices to prove that $F$ is $\omega$-pseudo complete. Let $(c_{i})_{i<\omega}$ be a pseudo-Cauchy sequence. Since $K$ is pseudo complete, there is $c\in K$ that is a pseudo-limit. If there exists $f\in F$ such that \begin{equation}\label{eq} v_{K}(f-c)>v_{K}(c-c_{i}),\,\end{equation} for all $i<\omega$, then $$v_{F}(f-c_{i})=v_{K}(f-c_{i})=v_{K}((f-c)+(c-c_{i}))=v_{K}(c-c_{i})$$ for all $i<\omega$, and so $f$ is a pseudo limit for $(c_{i})_{i<\omega}$ in $F$. Thus, suppose there is no $f\in F$ such that (\ref{eq}) holds. 
\smallskip

\textbf{Claim.} $c\in F$. We show that $F(c)$ is an immediate extension of $F$ in $K$. It suffices to show that $(c_{i})_{i<\omega}$ is of transcendental type over $F$. Indeed, if so, then for every polynomial $A(x)\in F[x]$, the sequence $(v_{k}(A(c_{i})))_{i<\omega}$ is eventually constant and $v_{K}(A(c))$ is its eventual value. Notice that if we factorize $A(x)$ over $\overline{F}$, it suffices to show that $v_{\overline{F}}(c-\alpha)$ is the eventual value of $(v_{\overline{F}}(c_{i}-\alpha))_{i<\omega}$, for every $\alpha$ root of $A(x)$ in $\overline{F}$. By Remark \ref{relalgclosed}, we know that $F$ is relatively algebraically closed in $K$. Hence, since $c\in K$, by Lemma \ref{lem-prop1} there exists $r\in F$ such that $v_{\overline{F}}(r-c)\geq v_{\overline{F}}(\alpha-c)$. Now, since the sequence is pseudo-Cauchy, there exists $i_{0}<\omega$ such that, for all $i>i_{0}$, $$v_{\overline{F}}(c-c_{i})>v_{\overline{F}}(c-c_{i-1})\geq v_{\overline{F}}(c-r)\geq v_{\overline{F}}(c-\alpha).$$ Thus, if $i>i_{0}$, $$v_{\overline{F}}(c_{i}-\alpha)=min\{v_{\overline{F}}(c-c_{i}),v_{\overline{F}}(c-\alpha)\}=v_{\overline{F}}(c-\alpha).$$ So, $v_{\overline{F}}(c-\alpha)$ is the eventual value of $(v_{\overline{F}}(c_{i}-\alpha))_{i<\omega}$. This completes the proof. \end{proof}

We recall the following definition from \cite{AxK65i}.

\begin{defn} If $H$ is a subgroup of an abelian group $G$, then $$(H\,\vert\,G)=\left\{x\in G\,\vert\,\text{there exists}\,m\in\mathbb{N}\,\text{such that}\,\,mx\in H\right\}.$$ \end{defn}
\begin{defn} Let $K$ be a valued field with finite ramification $e$, and let $\pi$ be a uniformizer for $K$. If $F$ is a subfield of $K$ such that 
$F$ and $K$ have property \textbf{P}, then $[F\,\vert\,K]$ denotes some fixed subfield $L$ of $K$ maximal with respect to the property $\Gamma_{L}\subseteq(\Gamma_{F}\,\vert\,G)$.\end{defn}

\begin{lem} \label{prop3ak2} Let $K,K'$ be two $\omega$-pseudo complete valued fields with common value group $G$ and satisfying property \textbf{P}. Let $F\subseteq K$ (resp. $F'\subseteq K'$) be a valued subfield with no immediate extensions in $K$ (resp. $K'$), and suppose $\Gamma_{F}$ is a countable, pure subgroup of $G$. Let $\psi: F\longrightarrow F'$ be an isomorphism of valued fields. If $x\in K\setminus F$, then there exists an isomorphism of valued fields $$\psi_{0}:[F(x)\,\vert\,K]\longrightarrow F_{1}^{'},$$ that extends $\psi$ and where $F_{1}^{'}$ is a subfield of $K'$ containing $F'$. Also, $$\Gamma_{[F(x)\,\vert\,K]}=(\Gamma_{F(x)}\,\vert\,G)=(\Gamma_{F_{1}^{'}}\,\vert\,G).$$\end{lem}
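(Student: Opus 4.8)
The plan is to follow the strategy of \cite[Proposition 3]{AxK65i}, splitting into the transcendental and algebraic cases for $x$ over $F$, and using the structural lemmas established above. First I would observe that since $F$ has no immediate extensions in $K$, the extension $F(x)/F$ is \emph{not} immediate; by Lemma \ref{relalgclosed} the field $F$ is relatively algebraically closed in $K$ (using that $\Gamma_F$ is pure and countable), so in fact $x$ is transcendental over $F$ and $F(x)/F$ must be either value-group-increasing or residue-field-increasing. I would treat the two subcases: (a) there is $x' \in F(x)$ with $v_K(x')\notin \Gamma_F$ — the "ramification" case; and (b) there is $x' \in F(x)$ whose residue is transcendental or of higher degree over the residue field of $F$ — the "residue" case. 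In each subcase, the point is to pin down the valuation on $F(x)$ completely: using that $(c_i)$-type arguments and Lemma \ref{lem-prop1}, together with the fact that $F$ is henselian (Proposition \ref{rac-hens}), one shows the value $v_K$ is uniquely determined on $F(x)$ by the corresponding datum (the new value modulo $\Gamma_F$, or the new residue), so the isomorphism $\psi$ extends canonically to $F(x)$ and then, since $\Gamma_F$ is pure, to $[F(x)\,\vert\,K]$ by taking roots of a uniformizer-power witnessing the new value; symmetrically one does this inside $K'$, using property \textbf{P} to guarantee the uniformizers match up and the same Eisenstein polynomial governs the ramified pieces on both sides.

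The key steps, in order, are: (1) reduce to $x$ transcendental over $F$ via Lemma \ref{relalgclosed}; (2) dichotomize $F(x)/F$ into the value-increasing and residue-increasing cases, and in each determine $v_K$ on $F(x)$ uniquely — here Lemma \ref{lem-prop1} and henselianity of $F$ (Proposition \ref{rac-hens}) supply the needed "best approximation from $F$" so that the value of any polynomial $A(x)$ is forced; (3) transport this across $\psi$: define $\psi$ on $F(x)$ by sending $x$ to an element $x'\in K'$ realizing the mirror-image datum over $F'$, which exists because $K,K'$ share the value group $G$ and satisfy property \textbf{P}; (4) pass from $F(x)$ to $L:=[F(x)\,\vert\,K]$ by adjoining, for each $g \in (\Gamma_{F(x)}\,\vert\,G)$ with $mg \in \Gamma_{F(x)}$, an $m$-th root of a chosen element of value $mg$ — using purity of $\Gamma_F$ in $G$ and henselianity to see these roots sit in $K$ and generate an extension with value group exactly $(\Gamma_{F(x)}\,\vert\,G)$ and unchanged residue field; (5) mirror (4) inside $K'$ to get $F_1'$ and check $\psi_0$ is a valued-field isomorphism, with $\Gamma_{F_1'} = (\Gamma_{F'_1(x')}\,\vert\,G) = (\Gamma_{F(x)}\,\vert\,G)$.

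I expect the main obstacle to be step (2) in the residue-increasing case: showing that the valuation on $F(x)$ is completely determined and that $\psi$ extends \emph{canonically}. In the equicharacteristic-$0$ setting of \cite{AxK65i} this rests on delicate approximation arguments, and here one must additionally keep track of the finite ramification $e$ and the Cohen-ring substructure $C(k)(\pi)\subseteq F$ supplied by Lemma \ref{C}, ensuring that when we enlarge residue fields we do not accidentally create an immediate extension or disturb property \textbf{P}. The bookkeeping needed to verify that $[F(x)\,\vert\,K]$ is well defined up to the isomorphism $\psi_0$ (i.e.\ that different maximal choices of $L$ are $F(x)$-isomorphic over $K$) is the second delicate point, and I would handle it by showing any such $L$ has value group exactly $(\Gamma_{F(x)}\,\vert\,G)$ and the same residue field as $F(x)$, hence is determined by a henselization-type uniqueness argument using that $K$ is henselian with finite ramification.
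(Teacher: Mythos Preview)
Your overall strategy---adapting Ax--Kochen's Proposition~3---matches the paper's, but two points need correction. First, your dichotomy (a)/(b) collapses: once you use Lemma~\ref{C} to assume $C(k)(\pi)\subseteq F$, the residue field of $F$ is already $k$, the full residue field of $K$, so no extension of $F$ inside $K$ can be residue-increasing. Case (b) is therefore vacuous, and the ``main obstacle'' you anticipate never arises; only the value-increasing case occurs.

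Second, and more seriously, your passage from $F(x)$ to $[F(x)\,\vert\,K]$ has a gap. You propose to reach $[F(x)\,\vert\,K]$ by adjoining $m$-th roots (step (4)) and then appeal to a ``henselization-type uniqueness argument''. But $[F(x)\,\vert\,K]$ is by definition \emph{maximal} in $K$ with value group contained in $(\Gamma_{F(x)}\,\vert\,G)$, hence has no immediate extensions in $K$; the field you obtain after adjoining roots may still admit a (possibly transcendental) immediate extension inside $K$, and henselization only controls the \emph{algebraic} immediate part. The missing ingredient is Lemma~\ref{ax1}: the relevant intermediate subfields are pseudo complete, and it is this---together with the uniqueness of maximal immediate extensions in the finitely ramified mixed-characteristic setting---that lets one identify $[F(x)\,\vert\,K]$ with a suitable $F_1'$ over $\psi$. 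The paper's proof invokes precisely Lemma~\ref{lem-prop1}, Lemma~\ref{ax1}, and the uniqueness of the immediate extension; your sketch omits the middle one, and that is where the argument would break down.
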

\begin{proof} The proof of \cite[Proposition 3]{AxK65} can be easily adapted to our case. It suffices to note that \begin{itemize} \item $\pi$ is a uniformizer for $K$ and, in general, $\pi\neq p$; \item by Lemma \ref{C}, we may assume $C(k)(\pi)\subseteq F$. \end{itemize} Then by 
Lemma \ref{lem-prop1}, Lemma \ref{ax1}, and the uniqueness of the immediate extension we have the assert. \end{proof}

\begin{thm}\label{mainthm} Let $K, K'$ be $\omega$-pseudo complete finitely ramified valued fields of cardinality $\aleph_{1}$ satisfying property \textbf{P}. If $K$ and $K'$ have the same value group $G$ of cardinality $\aleph_{1}$, then there exists an isomorphism of valued fields $$\psi: K\longrightarrow K'.$$ Moreover if $F, F'$ are two subfields of $K$ and $K'$ respectively and $\phi: F\longrightarrow F'$ is an isomorphism of valued fields, then $\psi$ is an extension of $\phi$. \end{thm}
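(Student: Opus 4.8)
The plan is to prove Theorem \ref{mainthm} by a back-and-forth argument between $K$ and $K'$, building an increasing chain of isomorphisms of valued subfields whose union is the desired $\psi$. The key structural input is the following: if $F \subseteq K$ is a valued subfield with no immediate extension in $K$ and with $\Gamma_F$ a countable pure subgroup of $G$, then (by Lemma \ref{relalgclosed}, Proposition \ref{rac-hens}, and Lemma \ref{ax1}) $F$ is relatively algebraically closed in $K$, henselian, and pseudo complete. So I would first fix a starting isomorphism. By property \textbf{P} both $K$ and $K'$ contain a canonical copy of $C(k)(\pi)$ (Lemma \ref{C}), with $\pi$ a root of the fixed Eisenstein polynomial $p(x)$; after possibly enlarging $F, F'$ and $\phi$, I would replace $F$ by the henselization (inside $K$) of the subfield generated by $F$ and $C(k)(\pi)$, intersected with a maximal subfield whose value group is pure of countable rank — concretely, pass to $[\,F'' \mid K\,]$-type closures as in Lemma \ref{prop3ak2} — so that the initial isomorphism $\psi_{\mathrm{init}} \supseteq \phi$ has domain with countable pure value group and no immediate extension in $K$ (and similarly on the $K'$ side). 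One must check $\psi_{\mathrm{init}}$ respects the valuation; this is automatic since $K, K'$ share the value group $G$ and the embeddings of $C(k)(\pi)$ are valuation-preserving.

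Next comes the transfinite induction. Since $|K| = |K'| = \aleph_1$, enumerate $K = \{x_\xi : \xi < \omega_1\}$ and $K' = \{x'_\xi : \xi < \omega_1\}$. I would construct an increasing continuous chain $(\psi_\xi)_{\xi<\omega_1}$ of isomorphisms of valued subfields $F_\xi \subseteq K \to F'_\xi \subseteq K'$, maintaining the invariant that at every stage $\Gamma_{F_\xi}$ is a countable pure subgroup of $G$ and $F_\xi$ has no immediate extension in $K$ (equivalently $F_\xi$ is pseudo complete and relatively algebraically closed in $K$), and symmetrically for $F'_\xi$. At a successor stage, to add the least not-yet-covered element $x$ of $K$ (forth) I apply Lemma \ref{prop3ak2}: it produces an extension $\psi_{\xi+1}: [F_\xi(x)\mid K] \to F'_1 \subseteq K'$ with $\Gamma_{[F_\xi(x)\mid K]} = (\Gamma_{F_\xi(x)}\mid G) = (\Gamma_{F'_1}\mid G)$, and I then take one further closure step (henselize, saturate the value group to a pure countable subgroup, and pseudo-complete — again by Lemmas \ref{ax1}, \ref{relalgclosed}, \ref{prop3ak2}, noting these operations keep things countable in value group and don't add immediate extensions in $K$) so that the invariant is restored; alternate with the symmetric "back" step for $K'$. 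At a limit stage $\lambda$, set $F_\lambda = \bigcup_{\xi<\lambda} F_\xi$ and $\psi_\lambda = \bigcup_{\xi<\lambda}\psi_\xi$; for $\lambda < \omega_1$ this still has countable value group, but the union may acquire an immediate extension in $K$ or fail to be pseudo complete, so I again apply the closure step to repair the invariant before continuing. Because $\omega_1$ has uncountable cofinality and each step only enlarges a countable value group, the purity/countability invariant survives all the way up.

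Finally, $\psi := \bigcup_{\xi<\omega_1}\psi_\xi$ is an isomorphism of valued fields $K \to K'$: the forth steps guarantee surjectivity onto $K'$ (every $x'_\xi$ is eventually in the range) and the domain eventually exhausts $K$, while injectivity and the valuation-preservation pass to the union. Since $\psi_{\mathrm{init}} \supseteq \phi$, we get $\psi \supseteq \phi$, giving the "moreover" clause.

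The main obstacle is the limit-stage bookkeeping: one must verify that the three closure operations (henselization inside $K$, passage to a subfield with pure value group $(\Gamma\mid G)$, and pseudo completion) can be performed simultaneously and compatibly on both sides through an isomorphism, that they genuinely restore "no immediate extension in $K$" and "countable pure value group" without destroying each other, and that they terminate — i.e. that iterating them stabilizes after countably many rounds so the invariant holds at every countable ordinal. This is exactly the role played by Lemmas \ref{ax1}, \ref{lem-prop1}, \ref{relalgclosed}, and \ref{prop3ak2} together, and the delicate point is that Lemma \ref{prop3ak2} already packages "add one element, then close up" into a single step whose output has the prescribed value group, so the induction can be set up to invoke it as the only move; the remaining work is checking that at limits the union's value group is still of the form $(\Gamma\mid G)$ for a countable $\Gamma$, which follows since a countable union of countable pure subgroups is again countable and pure in $G$.
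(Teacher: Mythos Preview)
Your proposal is correct and follows essentially the same back-and-forth scheme as the paper: start from the common subfield $C(k)(\pi)$ furnished by property \textbf{P} and Lemma \ref{C}, then alternately extend via Lemma \ref{prop3ak2}, maintaining the invariant ``countable pure value group, no immediate extension in $K$'', and take the union at $\omega_1$. Two small differences are worth noting. First, the paper enumerates transcendence bases of $K,K'$ over $C(k)(\pi),C(k)(\pi')$ rather than all elements, and then concludes $K=W$ at the end from relative algebraic closedness of each $F_\lambda$; your enumeration of all elements works just as well. Second, and more substantively, the point you flag as ``the main obstacle'' --- getting the isomorphism across the closure step at limit stages --- is handled in the paper by a single clean observation you do not name: since the union $D_\sigma=\bigcup_{\lambda<\sigma}F_\lambda$ already has pure countable value group (a union of a countable chain of pure countable subgroups), the field $F_\sigma=[D_\sigma\mid K]$ is a \emph{maximal immediate extension} of $D_\sigma$ inside $K$, and likewise for $F'_\sigma$; uniqueness of the maximal immediate extension (valid here since we are in residue characteristic $0$ on the coarsening / finitely ramified setting) then extends $\phi:D_\sigma\to D'_\sigma$ to $\psi_\sigma:F_\sigma\to F'_\sigma$. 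Once you invoke this, your separate ``henselize / purify / pseudo-complete'' operations collapse into the single $[\,\cdot\mid K\,]$ step and no iterated-closure bookkeeping is needed.
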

\begin{proof} We follow the scheme of the proof of \cite[Theorem 2]{AxK65i}. Let $k$ be the common residue field of $K$ and $K'$. Then, by Lemma \ref{embeddingqp}, we may consider $C(k)$ as a common subfield of $K$ and $K'$. Now, let $\pi\in K$ and $\pi'\in K'$ be uniformizers of $K$ and $K'$, respectively. Since $K, K'$ satisfy property $P$, by Lemma \ref{C} $C(k)(\pi)$ and $C(k)(\pi')$ are two isomorphic valued subfields of $K$ and $K'$ respectively. Thus, the induced valuations have the same residue field $k$ and the same value group $\Gamma\cong\mathbb{Z}$, a countable pure subgroup of $G$. Now, choose two trascendence basis $B$ and $B'$ for $K$ and $K'$ over $C(k)(\pi)$ and $C(k)(\pi')$, respectively. Since $G$ has cardinality $\aleph_{1}$, it follows that $|B|=|B'|=\aleph_{1}$. Well order $B$ and $B'$ to form $\omega_{1}$-sequences. Thus, we have $B=(x_{\eta})_{\eta<\omega_{1}}$ and $B'=(x_{\eta}')_{\eta<\omega_{1}}$. For each $0<\lambda<\omega_{1}$, we define \begin{itemize} \item a subfield $F_{\lambda}\subseteq K$ with no immediate extensions in $K$ and with value group $\Gamma_{\lambda}$ a pure countable subgroup of $G$; \item a subfield $F_{\lambda}'\subseteq K'$ with no immediate extensions in $K'$ and with value group $\Gamma_{\lambda}'$ a pure countable subgroup of $G$; \item an isomorphism of valued fields $\psi_{\lambda}:F_{\lambda}\longrightarrow F_{\lambda}'$. \end{itemize}
For $\lambda=1$, $F_{1}:=C(k)(\pi)$ and $F^{'}_{1}=C(k)(\pi')$. Thus we have an isomorphism of valued fields $\psi_{1}:F_{1}\longrightarrow F^{'}_{1}$ such that $\psi_{1}(\pi)=\pi'$.
\\ Suppose $\lambda<\omega_{1}$ is an odd ordinal and we have defined an isomorphism of valued fields $\psi_{\lambda}:F_{\lambda}\longrightarrow F_{\lambda}^{'}$. Since $\Gamma_{F_{\lambda}}$ is countable, there exists $\mu<\omega_{1}$ such that $x_{\mu}\notin F_{\lambda}$. Let $\mu(\lambda)$ be the smallest such $\mu$. Let $$F_{\lambda+1}=[F(x_{\mu(\lambda)})\,\vert\,K].$$ Then, by Lemma \ref{prop3ak2}, there exists a valued subfield $F^{'}_{\lambda+1}\subseteq K'$ such that $\psi_{\lambda}$ can be extended to an isomorphism of valued fields $$\psi_{\lambda+1}:F_{\lambda+1}\longrightarrow F_{\lambda+1}^{'}.$$ If $\lambda$ is an even ordinal, we proceed in the same way, inverting the roles of $F_{\lambda}$ and $F^{'}_{\lambda}$ and considering $\psi^{-1}_{\lambda}$ instead of $\psi_{\lambda}$.
\\ If $\sigma<\omega_{1}$ is a limit ordinal, we consider the fields $D_{\sigma}=\bigcup_{\lambda<\sigma}F_{\lambda}$ and $D_{\sigma}^{'}=\bigcup_{\lambda<\sigma}F_{\lambda}^{'}$. Then there exists an isomorphism of valued fields $\phi:D_{\sigma}\longrightarrow D_{\sigma}^{'}$ extending all the $\psi_{\lambda}$, $\lambda<\sigma$. If we set \begin{align*}F_{\sigma}=&[D_{\sigma}\,\vert\,K], \\ F_{\sigma}^{'}=&[D_{\sigma}^{'}\,\vert\,K']\end{align*} then, by 
the uniqueness of the immediate extension, there is an isomorphism of valued fields $$\psi_{\sigma}: F_{\sigma}\longrightarrow F_{\sigma}^{'},$$ since $F_{\sigma}$ (resp. $F^{'}_{\sigma}$) is a maximal immediate extension of $D_{\sigma}$ (resp. $D_{\sigma}^{'}$). We have thus inductively proved that $W=\bigcup_{\lambda<\omega_{1}}F_{\lambda}$ and $W'=\bigcup_{\lambda<\omega_{1}}F^{'}_{\lambda}$ are isomorphic valued fields. By alternating the isomorphisms between $F_{\lambda}$ and $F^{'}_{\lambda}$ for even and odd $\lambda$, we have guaranteed that all the elements of the transcendence basis $B$ and $B'$ have been considered. Hence $K$ and $K'$ are algebraic over $W$ and $W'$, respectively. On the other hand, since each $F_{\lambda}$ (resp. $F^{'}_{\lambda}$) is relatively algebraically closed in $K$ (resp.: $K'$), then $W$ (resp. $W'$) is also relatively algebraically closed in $K$ (resp. $K_{1}'$). Hence, $K=W$ and $K'=W'$, and the theorem is proved.

\end{proof}
We now prove the following corollary, which characterizes $\omega$-pseudo complete finitely ramified valued fields with fixed residue field and value group.
\begin{cor}\label{finextscharacterization} Let $K$ be an $\omega$-pseudo complete finitely ramified valued field of cardinality $\aleph_{1}$, with residue field $k$, and valued in a group $G$ of cardinality $\aleph_{1}$. Then $K$ is isomorphic (as a valued field) to $L(t^{H},\beta)_{\aleph_{0}}$, where $L$ is a finite extension of the Cohen field $C(k)$ over $k$, $H=G/A$ and $\beta:H\times H\longrightarrow A$ a $2$-cocycle, modulo the Continuum Hypothesis.\end{cor}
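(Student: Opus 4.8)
The plan is to realize $L(t^{H},\beta)_{\aleph_{0}}$ as a second valued field satisfying \emph{exactly} the hypotheses of $K$, and then to close the argument by invoking the uniqueness Theorem \ref{mainthm}. First I would extract the data attached to $K$: let $e\geq 1$ be its finite ramification, let $\pi\in K$ be a uniformizer, let $A\cong\mathbb{Z}$ be the minimal convex subgroup of $G$, and set $H=G/A$. Since $A$ is countable and $|G|=\aleph_{1}$, the quotient $H$ has cardinality $\aleph_{1}$. A choice of transversal map for the exact sequence $0\to A\to G\to H\to 0$ produces, by Fact \ref{isom}, a $2$-cocycle $\beta\colon H\times H\to A$ with $G\cong G_{\beta}$ as ordered abelian groups; and by Lemma \ref{C} we may take $L:=C(k)(\pi)$, a degree-$e$ finite extension of the Cohen field $C(k)$, with $\pi$ a root of an Eisenstein polynomial $p(x)\in C(k)[x]$.

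Next I would verify that $M:=L(t^{H},\beta)_{\aleph_{0}}$ meets every requirement of Theorem \ref{mainthm}. By Propositions \ref{field} and \ref{valuation} it is a valued field with value group $G_{\beta}\cong G$; by the lemma computing the ramification and residue field of $C(k)(\pi)(t^{H},\beta)_{\delta}$, it is finitely ramified of index $e$ with residue field $k$, and $\mathrm{val}(\pi)=(1,0)$ is the minimal positive element of $G_{\beta}$, so $\pi$ is again a uniformizer of $M$ that is a root of the same $p(x)$; hence $K$ and $M$ satisfy property \textbf{P}. Since $L$ is pseudo complete (a finite extension of a complete discretely valued field), Theorem \ref{pseudocompleteness} with $\lambda=\omega$ gives that $M$ is $\omega$-pseudo complete. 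For the cardinality, an element of $M$ is a series with countable well-ordered support in $H$ and coefficients in $L$, so $|M|\leq |H|^{\aleph_{0}}\cdot|L|^{\aleph_{0}}=\aleph_{1}^{\aleph_{0}}=2^{\aleph_{0}}$, which equals $\aleph_{1}$ under the Continuum Hypothesis; since $M\supseteq\{t^{h}:h\in H\}$ one has $|M|\geq\aleph_{1}$, so $|M|=\aleph_{1}$.

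At this point both $K$ and $M$ are $\omega$-pseudo complete, finitely ramified valued fields of cardinality $\aleph_{1}$, with the common value group $G$ of cardinality $\aleph_{1}$, and they satisfy property \textbf{P}. Theorem \ref{mainthm} then produces an isomorphism of valued fields $\psi\colon K\to M=L(t^{H},\beta)_{\aleph_{0}}$, which is precisely the assertion of the corollary.

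I expect the only genuinely delicate points to be the two places where the Continuum Hypothesis and property \textbf{P} are used: making the cardinality bound $|M|=\aleph_{1}$ rigorous (this is the identity $\aleph_{1}^{\aleph_{0}}=2^{\aleph_{0}}=\aleph_{1}$ under CH, combined with the easy lower bound $|M|\geq|H|=\aleph_{1}$, and using $|k|\leq|K|=\aleph_{1}$ to bound $|L|$), and confirming that the uniformizer $\pi$ of $K$ is genuinely a uniformizer of $M$ with the same minimal polynomial over $C(k)$, so that the pair $(K,M)$ really lies in the scope of Theorem \ref{mainthm}. Everything else is routine bookkeeping with the structural results of Section \ref{twisted} and the general characterization theorem.
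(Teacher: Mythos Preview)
Your proposal is correct and follows essentially the same approach as the paper: choose $L=C(k)(\pi)$ via Lemma \ref{C}, verify that $L(t^{H},\beta)_{\aleph_{0}}$ is an $\omega$-pseudo complete field of cardinality $\aleph_{1}$ (under CH) sharing property \textbf{P} and value group $G$ with $K$, and then apply Theorem \ref{mainthm}. The paper's own proof is a two-line sketch invoking exactly these ingredients, so your version simply unpacks the details (in particular the CH cardinality computation and the property \textbf{P} verification) that the paper leaves implicit.
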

\begin{proof} The proof follows directly from the $\omega$-pseudo completeness of $L(t^{H},\beta)_{\aleph_{0}}$ and from Theorem \ref{mainthm} applied to $K$ and $L(t^{H},\beta)_{\aleph_{0}}$, where $L$ is a finite extension of $C(k)$ such that $L$ and $K$ have the property \textbf{P}. \end{proof}

\section{Remarks and consequences} \label{cons}
\subsection{Lifting automorphisms}
One consequence of having an explicit description of a valued field in terms of power series, is the existence of an explicit lift of the automorphisms of the residue field and the value group to automorphisms of the valued field. For instance, in the equicharacteristic case, if $k$ is a field and $G$ an ordered abelian group, the Hahn-field $k((t^{G}))$ is naturally equipped with an automorphism $\phi(\sum_{g}a_{g}t^{g})=\sum_{g}\overline{\phi}(a_{g})^{\hat{\phi}(g)}$, where $\overline{\phi}\in Aut(k)$ and $\hat{\phi}\in Aut(G)$. In the mixed characteristic case, we notice that the same situation occurs for the Hahn-like construction $K(t^{H},\beta)_{\aleph_{0}}$, with some necessary adjustments. 
Indeed, in this case, the valuation is the composition of the $t$-adic valuation and the valuation on the field $K$ of the coefficients. Thus, a lifting of the automorphisms of the residue field and the value group is possible if they respect the cross-section of $K$, in the sense of (\ref{condition}) in Lemma \ref{respectval}. Recall that if $G$ is an ordered abelian group with smallest convex subgroup $A$ and $H:=G/A$, then $G_{\beta}$ defined as in Lemma \ref{lemmagroup}, is an ordered abelian group isomorphic to $G$. 
\begin{lem}\label{phioverbeta} Let $\hat{\phi}_{\beta}\in Aut(G_{\beta})$. There exist two endomorphisms $\hat{\phi}_{1}$ of $A$ and $\hat{\phi}_{2}$ of $H$, such that $$\hat{\phi}_{1}(-\beta(h,h'))=-\beta(\hat{\phi}_{2}(h),\hat{\phi}_{2}(h')).$$\end{lem}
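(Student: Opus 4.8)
The plan is to use the concrete model $G_{\beta}=A\times H$ provided by Lemma \ref{lemmagroup}, in which $A\times\{0\}$ is precisely the smallest nonzero convex subgroup of $G_{\beta}$ (it is convex for the lexicographic order and is identified with $A$, the smallest convex subgroup of $G$ in the setting of Section \ref{twisted}). First I would note that any $\hat{\phi}_{\beta}\in Aut(G_{\beta})$ must fix $A\times\{0\}$ setwise: $\hat{\phi}_{\beta}(A\times\{0\})$ is again a nonzero convex subgroup, hence contains $A\times\{0\}$, and applying the same to $\hat{\phi}_{\beta}^{-1}$ gives the reverse inclusion. Therefore $\hat{\phi}_{\beta}$ restricts to an order automorphism $\hat{\phi}_{1}$ of $A\cong A\times\{0\}$ and descends to an order automorphism $\hat{\phi}_{2}$ of the ordered quotient $G_{\beta}/(A\times\{0\})\cong H$; in particular both are endomorphisms, as required.

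Next I would compute $\hat{\phi}_{\beta}$ in coordinates. Since $\beta(h,0)=\beta(0,h')=0$ we have $(z,h)=(z,0)+(0,h)$, so additivity gives $\hat{\phi}_{\beta}(z,h)=(\hat{\phi}_{1}(z),0)+\hat{\phi}_{\beta}(0,h)$; writing $\hat{\phi}_{\beta}(0,h)=(c(h),\hat{\phi}_{2}(h))$ for a map $c\colon H\to A$ (well defined because $\hat{\phi}_{\beta}(0,h)$ lies over $\hat{\phi}_{2}(h)$, and $c(0)=0$ since $\hat{\phi}_{\beta}$ fixes the identity), one obtains $\hat{\phi}_{\beta}(z,h)=(\hat{\phi}_{1}(z)+c(h),\hat{\phi}_{2}(h))$. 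Now apply the homomorphism property of $\hat{\phi}_{\beta}$ to $(0,h)+(0,h')=(-\beta(h,h'),h+h')$. Comparing the $H$-components reconfirms that $\hat{\phi}_{2}$ is additive; comparing the $A$-components yields
\begin{equation*}
\hat{\phi}_{1}(-\beta(h,h'))+\bigl(c(h+h')-c(h)-c(h')\bigr)=-\beta(\hat{\phi}_{2}(h),\hat{\phi}_{2}(h')).
\end{equation*}

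The correction term $c(h+h')-c(h)-c(h')$ is exactly the $2$-coboundary attached to the $1$-cochain $c\colon H\to A$, so this already shows that the $2$-cocycles $\hat{\phi}_{1}\circ\beta$ and $\beta\circ(\hat{\phi}_{2}\times\hat{\phi}_{2})$ represent the same class in $H^{2}(H,A)$ — which is all that is needed to lift $\hat{\phi}_{\beta}$ to $K_{\beta}$ up to valued isomorphism. To reach the displayed equality on the nose I would then invoke that the construction is insensitive to the transversal map: by Fact \ref{relationbetas} and the isomorphism results of Section \ref{twisted}, replacing $\beta$ by a cohomologous cocycle changes neither $G_{\beta}$ nor $K_{\beta}$ up to (valued) isomorphism, so one is free to pick the transversal defining $\beta$ for which the associated cochain $c$ is trivial, killing the coboundary term. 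The delicate point — and the step I expect to be the main obstacle — is precisely this last reduction: one must verify that $c$ can be made to vanish by such a change (equivalently, that the relation $\psi\circ\hat{\phi}_{2}-\hat{\phi}_{1}\circ\psi=-c$ admits a $1$-cochain solution $\psi$), or else that the resulting $c$ is automatically a homomorphism; the preservation of the smallest convex subgroup, the coordinate computation, and the cocycle/coboundary bookkeeping are all routine.
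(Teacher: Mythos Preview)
Your route diverges from the paper's at the very first step. The paper simply asserts that $\hat{\phi}_{\beta}(z,h)=(\hat{\phi}_{1}(z),\hat{\phi}_{2}(h))$ for two maps $\hat{\phi}_{1}\colon A\to A$ and $\hat{\phi}_{2}\colon H\to H$, and then reads the displayed identity directly off the homomorphism condition for $\hat{\phi}_{\beta}$. You, more carefully, observe that an order automorphism must preserve the smallest convex subgroup and hence has the form $\hat{\phi}_{\beta}(z,h)=(\hat{\phi}_{1}(z)+c(h),\hat{\phi}_{2}(h))$ for a $1$-cochain $c$; this yields the identity only up to the coboundary $\delta c$. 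So you have correctly isolated exactly the point the paper glosses over.

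The genuine gap is your last reduction, and it does not close. Passing to a cohomologous cocycle $\beta'=\beta-\psi^{*}$ and transporting $\hat{\phi}_{\beta}$ through the isomorphism $G_{\beta}\cong G_{\beta'}$ replaces $c$ by $c'(h)=c(h)+\psi(\hat{\phi}_{2}(h))-\hat{\phi}_{1}(\psi(h))$. Two problems arise. First, even if $c'=0$, you have proved the identity for $\beta'$, not for the $\beta$ fixed in the statement. Second, the equation $\hat{\phi}_{1}\circ\psi-\psi\circ\hat{\phi}_{2}=c$ need not be solvable: already when $\hat{\phi}_{1}=\mathrm{id}_{A}$ and $\hat{\phi}_{2}=\mathrm{id}_{H}$ it forces $c=0$, yet $(z,h)\mapsto(z+c(h),h)$ is an order automorphism of $G_{\beta}$ for \emph{any} homomorphism $c\colon H\to A$ (there $\delta c=0$, so the lemma's equation holds---but not because $c$ vanishes). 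What the exact equality actually requires is $\delta c=0$, i.e.\ that $c$ is additive; neither of the alternatives you list (solving for $\psi$, or proving $c$ additive a priori) is carried out. You have established the correct cohomological statement---$\hat{\phi}_{1}\circ\beta$ and $\beta\circ(\hat{\phi}_{2}\times\hat{\phi}_{2})$ are cohomologous---which is what matters for lifting automorphisms up to valued isomorphism, but not the on-the-nose equality the lemma claims.
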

\begin{proof} From the definition of $G_{\beta}$, there are two maps $\hat{\phi}_{1}:A\longrightarrow A$ and $\hat{\phi}_{2}:H\longrightarrow H$ such that $\hat{\phi}_{\beta}(z,h)=(\hat{\phi}_{1}(z), \hat{\phi}_{2}(h))$ for all $(z,h)\in G_{\beta}$. Since $\hat{\phi}_{\beta}$ is an automorphism of $G_{\beta}$, we have that $\hat{\phi}_{1}$ and $\hat{\phi}_{2}$ satisfy the following properties:
\begin{equation*}\begin{cases} \hat{\phi}_{1}(z)=0, \,\,\,\text{if}\, z=0; \\ \hat{\phi}_{2}(h)=0, \,\,\,\text{if}\, h=0; \\ \hat{\phi}_{1}(z+z'-\beta(h,h'))=\hat{\phi}_{1}(z)+\hat{\phi}_{1}(z')-\beta(\hat{\phi}_{2}(h),\hat{\phi}_{2}(h')), \,\,\,\text{for all}\, (z,h)\in G_{\beta};\\ 
\hat{\phi}_{2}(h+h')=\hat{\phi}_{2}(h)+\hat{\phi}_{2}(h'), \,\,\,\text{for all}\, h\in H.\end{cases}\end{equation*}
In particular, we have that $\hat{\phi}_{1}, \hat{\phi}_{2}$ are surjective endomorphisms of $A$ and $H$ respectively, and that $$\hat{\phi}_{1}(-\beta(h,h'))=-\beta(\hat{\phi}_{2}(h),\hat{\phi}_{2}(h')).$$ Notice that $\hat{\phi}_{1}$ is not in general injective, even if $\hat{\phi}_{\beta}$ is. \end{proof}
\begin{lem}\label{respectval} Let $G$ be an ordered abelian group with smallest convex subgroup $A$, and $L$ a mixed characteristic valued field with values in $A$ and a cross-section $\sigma$. Let $\overline{\phi}\in Aut(L)$, and $\hat{\phi}_{\beta}=(\hat{\phi}_{1},\hat{\phi}_{2})\in Aut(G_{\beta})$. If $\overline{\phi}\circ\sigma=\sigma\circ\hat{\phi}_{1}$, then $\phi:L(t^{H},\beta)\longrightarrow L(t^{H},\beta)$ such that $$\phi(\sum_{h\in S}a_{h}t^{h})=\sum_{h\in S}\overline{\phi}(a_{h})t^{\hat{\phi}(h)}$$ is an automorphism of $L(t^{H},\beta)$ as a valued field.\end{lem}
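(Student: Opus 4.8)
The plan is to verify, in turn, that $\phi$ is a well-defined bijection of the underlying set, that it is additive, unital and multiplicative, and finally that it conjugates $val$ by an automorphism of $G_\beta$ --- whence it is an automorphism of $L(t^H,\beta)$ as a valued field. I write $\hat\phi_2$ for the second component of $\hat\phi_\beta$, i.e.\ for the exponent map $\hat\phi$ of the statement. Note first that $\hat\phi_2$ is an \emph{order-preserving automorphism} of $H$: since $A$ is the minimal convex subgroup of $G$, it is $\hat\phi_\beta$-invariant (the minimal nonzero convex subgroup is characteristic), so $\hat\phi_\beta$ descends to an automorphism of $G_\beta/A\cong H$, which is order-preserving because $H$ carries the quotient order; this is the map $\hat\phi_2$ of Lemma~\ref{phioverbeta}.

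Next I would dispatch the set-level facts. Because $\hat\phi_2$ is an order-preserving bijection of $H$, it carries a well-ordered subset $S\subseteq H$ with $|S|\le\delta$ onto a well-ordered subset $\hat\phi_2(S)$ of the same cardinality, and the coefficients $\overline\phi(a_h)$ lie in $L$; hence $\phi$ maps $L(t^H,\beta)_\delta$ into itself, with two-sided set-theoretic inverse $\sum_h b_h t^h\mapsto\sum_h\overline\phi^{-1}(b_h)\,t^{\hat\phi_2^{-1}(h)}$. Additivity of $\phi$ is immediate (the sum is taken coefficientwise, $\overline\phi$ is additive, $\hat\phi_2$ is injective), and $\phi(1)=\overline\phi(1)\,t^{\hat\phi_2(0)}=1$ since $\hat\phi_2(0)=0$.

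The heart of the proof, and the only place where I expect genuine work, is multiplicativity. Using distributivity and additivity of $\phi$ --- the rearrangement of the (possibly infinite) support sums being legitimate for precisely the reasons spelled out in the proof of Proposition~\ref{field} --- it suffices to check $\phi$ on monomials. There, by multiplicativity of $\overline\phi$ and additivity of $\hat\phi_2$,
\[
\phi\!\big((a_ht^h)(b_{h'}t^{h'})\big)=\overline\phi(a_h)\,\overline\phi(b_{h'})\,\overline\phi\big(\sigma(-\beta(h,h'))\big)\,t^{\hat\phi_2(h)+\hat\phi_2(h')}.
\]
Now the hypothesis $\overline\phi\circ\sigma=\sigma\circ\hat\phi_1$ turns $\overline\phi\big(\sigma(-\beta(h,h'))\big)$ into $\sigma\big(\hat\phi_1(-\beta(h,h'))\big)$, and Lemma~\ref{phioverbeta} rewrites this as $\sigma\big(-\beta(\hat\phi_2(h),\hat\phi_2(h'))\big)$; substituting, the right-hand side collapses to $\big(\overline\phi(a_h)t^{\hat\phi_2(h)}\big)\big(\overline\phi(b_{h'})t^{\hat\phi_2(h')}\big)=\phi(a_ht^h)\cdot\phi(b_{h'}t^{h'})$. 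Hence $\phi$ is a ring homomorphism, and being bijective, a field automorphism. The one delicate point is getting the ``twist'' factor $\sigma(-\beta(h,h'))$ to transport correctly --- exactly what the compatibility condition $\overline\phi\circ\sigma=\sigma\circ\hat\phi_1$ is there to guarantee, in combination with the cocycle identity of Lemma~\ref{phioverbeta}.

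It remains to check compatibility with the valuation. Applying $v$ to $\overline\phi\circ\sigma=\sigma\circ\hat\phi_1$ and using that $\sigma$ is a cross-section gives $v(\overline\phi(\sigma(a)))=\hat\phi_1(v(\sigma(a)))$ for all $a\in A$; since $\overline\phi$ preserves the valuation ring of $L$ (being a valued-field automorphism --- or because on the coefficient fields relevant here, finite extensions of a Cohen field, the valuation is canonical, so any field automorphism preserves it), it sends units to units, and writing $x=\sigma(v(x))u$ with $u$ a unit yields $v(\overline\phi(x))=\hat\phi_1(v(x))$ for every $x\in L^\times$; thus $v\circ\overline\phi=\hat\phi_1\circ v$. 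For $0\neq a=\sum_{h\in S}a_h t^h$ with $h_0=\min S$, order-preservation of $\hat\phi_2$ yields $\min\operatorname{supp}(\phi(a))=\hat\phi_2(h_0)$, with nonzero leading coefficient $\overline\phi(a_{h_0})$, so
\[
val(\phi(a))=\big(v(\overline\phi(a_{h_0})),\hat\phi_2(h_0)\big)=\big(\hat\phi_1(v(a_{h_0})),\hat\phi_2(h_0)\big)=\hat\phi_\beta\big(v(a_{h_0}),h_0\big)=\hat\phi_\beta\big(val(a)\big).
\]
Therefore $\phi$ intertwines $val$ with the ordered-group automorphism $\hat\phi_\beta$; in particular it preserves the valuation ring, so $\phi$ is an automorphism of $L(t^H,\beta)$ as a valued field, which is the assertion.
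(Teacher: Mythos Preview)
Your proof is correct and follows essentially the same route as the paper: the core step in both is the multiplicativity check, reduced to showing $\overline{\phi}(\sigma(-\beta(h,h')))=\sigma(-\beta(\hat\phi_2(h),\hat\phi_2(h')))$ via the hypothesis $\overline\phi\circ\sigma=\sigma\circ\hat\phi_1$ together with Lemma~\ref{phioverbeta}. You are in fact more thorough than the paper --- you verify well-definedness (that $\hat\phi_2$ preserves well-ordered supports), exhibit the set-theoretic inverse, and check the valuation compatibility $val\circ\phi=\hat\phi_\beta\circ val$, all of which the paper's proof leaves implicit or omits.
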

\begin{proof} Note that since $G\cong G_{\beta}$, any $\hat{\phi}\in Aut(G)$ uniquely determines an automorphism $\hat{\phi}_{\beta}\in Aut(G_{\beta})$. 
We now check that $\phi$ is an automorphism.
Clearly, $$\phi(\sum_{h\in S}a_{h}t^{h}+\sum_{h\in S'}b_{h}t^{h})=\phi(\sum_{h\in S}a_{h}t^{h})+\phi(\sum_{h\in S'}b_{h}t^{h})$$ as in the usual power series case.
For the multiplication, we have \begin{align*}&\phi(\sum_{h\in S}a_{h}t^{h}\cdot\sum_{h\in S'}b_{h}t^{h})=\\ =&\phi\left(\sum_{l\in S+S'}\left(\sum_{\substack{h\in S \\ h'\in S' \\ h+h'=l}}a_{h}b_{h'}\sigma(-\beta(h,h'))\right)t^{l}\right)=\\=&\sum_{l\in S+S'}\overline{\phi}\left(\sum_{\substack{h\in S \\ h'\in S' \\ h+h'=l}}a_{h}b_{h'}\sigma(-\beta(h,h'))\right)t^{\hat{\phi}_{2}(l)}= \\=&\sum_{l\in S+S'}\left(\sum_{\substack{h\in S \\ h'\in S' \\ h+h'=l}}\overline{\phi}(a_{h})\overline{\phi}(b_{h'})\overline{\phi}(\sigma(-\beta(h,h')))\right)t^{\hat{\phi}_{2}(l)}.\end{align*} On the other hand, we have \begin{align*}&\phi(\sum_{h\in S}a_{h}t^{h})\cdot\phi(\sum_{h\in S'}b_{h}t^{h})=\\ =&\sum_{h\in S}\overline{\phi}(a_{h})t^{\hat{\phi}_{2}(h)}\cdot\sum_{h\in S'}\overline{\phi}(b_{h})t^{\hat{\phi}_{2}(h)}=\\=&\sum_{l\in S+S'}\left(\sum_{\substack{h\in S \\ h'\in S' \\ h+h'=l}}\overline{\phi}(a_{h})\overline{\phi}(b_{h'})\sigma(-\beta(\hat{\phi}_{2}(h),\hat{\phi}_{2}(h')))\right)t^{\hat{\phi}_{2}(l)}.\end{align*}
\textbf{Claim.} For all $h,h'\in H$, $\overline{\phi}(\sigma(-\beta(h,h')))=\sigma(-\beta(\hat{\phi}_{2}(h),\hat{\phi}_{2}(h'))).$ 

By Lemma \ref{phioverbeta}, the claim becomes \begin{equation}\label{condition}\overline{\phi}(\sigma(-\beta(h,h')))=\sigma(\hat{\phi}_{1}(-\beta(h,h'))),\end{equation} i.e. the following diagram commutes 
\begin{center}
\begin{tikzpicture} 
  \matrix (m) [matrix of math nodes,row sep=3em,column sep=4em,minimum width=2em]
  {
     &A & A &\\
     & L & L &\\};
  \path[->]
    (m-1-2) edge node [above] {$\hat{\phi}_{1}$} (m-1-3)
            edge node [left] {$\sigma$} (m-2-2)
    (m-2-2) edge node [below] {$\overline{\phi}$} (m-2-3)
    (m-1-3) edge node [right] {$\sigma$} (m-2-3);
   
\end{tikzpicture}
\end{center}
which is true by hypothesis.
\end{proof}
Recall that in a discretely ordered abelian group the smallest convex subgroup $A$ is isomorphic to $\mathbb{Z}$, thus the unique surjective endomorphism of $A$ as an ordered group is the identity. Hence (\ref{condition}) forces $\overline{\phi}_{\restriction{\sigma(A)}}=id_{L}$. In particular, we know that in the case of a finitely ramified valued field with uniformizer $\pi$ and values in $\mathbb{Z}$, we have $\sigma(z)=\pi^{z}$, for all $z\in\mathbb{Z}$. Thus, (\ref{condition}) is equivalent to $\overline{\phi}(\pi^{z})=\pi^{z}$, for all $z\in\mathbb{Z}$.
\begin{prop} Let $K$ be an $\omega$-pseudo complete finitely ramified valued field of cardinality $\aleph_{1}$ with value group $G$ of cardinality $\aleph_{1}$, residue field $k$ and uniformizer $\pi$. Then if $\hat{\phi}$ is an automorphism of $G$, and $\overline{\phi}$ is an automorphism of $k$ such that its lift to $C(k)(\pi)$ fixes the $\pi^{th}$-powers, then there exists an automorphism of the valued field $K$ which lifts both $\hat{\phi}$ and $\overline{\phi}$ to $K$.\end{prop}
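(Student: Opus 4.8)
The plan is to transport the problem to the explicit Hahn-like model and then appeal to Lemma~\ref{respectval}. By Corollary~\ref{finextscharacterization} (hence modulo the Continuum Hypothesis), fix an isomorphism of valued fields identifying $K$ with $L(t^{H},\beta)_{\aleph_{0}}$, where $L=C(k)(\pi)$ is the finite extension of the Cohen field $C(k)$ obtained by adjoining a root $\pi$ of the Eisenstein polynomial of a uniformizer of $K$, $A\cong\mathbb{Z}$ is the smallest convex subgroup of $G$, $H=G/A$, the value group is identified with $G=G_{\beta}$, and the cross-section on $A$ is $\sigma(z)=\pi^{z}$. Under this identification it is enough to exhibit an automorphism $\phi$ of the valued field $L(t^{H},\beta)_{\aleph_{0}}$ that induces $\overline{\phi}$ on the residue field $k$ and $\hat{\phi}$ on the value group $G_{\beta}$.

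First I would analyse the action of $\hat{\phi}$ on the exact sequence $0\to A\to G_{\beta}\to H\to 0$. As $A$ is the smallest convex subgroup of $G$, it is characteristic, so $\hat{\phi}$ restricts to an order-automorphism $\hat{\phi}_{1}$ of $A$ and descends to an order-automorphism $\hat{\phi}_{2}$ of $H$; thus $\hat{\phi}$ corresponds to $\hat{\phi}_{\beta}=(\hat{\phi}_{1},\hat{\phi}_{2})$ in the sense of Lemma~\ref{phioverbeta}. Since $A\cong\mathbb{Z}$ admits no nontrivial order-automorphism, $\hat{\phi}_{1}=\mathrm{id}_{A}$. Then, exactly as in the discussion after Lemma~\ref{respectval}, the cocycle-compatibility condition $\overline{\phi}\circ\sigma=\sigma\circ\hat{\phi}_{1}$ (that is, (\ref{condition}) with $\hat{\phi}_{1}=\mathrm{id}$) is equivalent to the requirement $\overline{\phi}(\pi^{z})=\pi^{z}$ for all $z\in\mathbb{Z}$, i.e.\ that the lift of $\overline{\phi}$ to $C(k)(\pi)$ fixes the $\pi$-th powers --- which is precisely our hypothesis on $\overline{\phi}$.

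Next I would apply Lemma~\ref{respectval} with $L=C(k)(\pi)$, the given lift of $\overline{\phi}$ to $L$, and $\hat{\phi}_{\beta}$: it produces an automorphism of valued fields $\phi\colon L(t^{H},\beta)\to L(t^{H},\beta)$, $\phi\bigl(\sum_{h\in S}a_{h}t^{h}\bigr)=\sum_{h\in S}\overline{\phi}(a_{h})t^{\hat{\phi}_{2}(h)}$. Because $\hat{\phi}_{2}$ is an order-automorphism of $H$, it carries well-ordered subsets of $H$ bijectively onto well-ordered subsets and preserves cardinality, so $\phi$ maps an element with support of size $\le\aleph_{0}$ to one of the same kind; hence $\phi$ restricts to an automorphism of $L(t^{H},\beta)_{\aleph_{0}}\cong K$. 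Finally $\phi$ acts on the constant subfield $L$ as the chosen lift of $\overline{\phi}$, so it induces $\overline{\phi}$ on the common residue field $k$ of $L$ and of $L(t^{H},\beta)_{\aleph_{0}}$; and since that lift is a valued-field automorphism of $L$ (it preserves $v$, as $A\cong\mathbb{Z}$ has no nontrivial order-automorphism), Proposition~\ref{valuation} yields, for $x=\sum a_{h}t^{h}$ with $h_{0}=\min\mathrm{supp}(x)$, that $val(\phi(x))=(v(a_{h_{0}}),\hat{\phi}_{2}(h_{0}))=\hat{\phi}_{\beta}(val(x))$, using $\hat{\phi}_{1}=\mathrm{id}$. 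Thus $\phi$ induces $\hat{\phi}$ on $G_{\beta}=G$. Pulling $\phi$ back along the fixed isomorphism gives the desired automorphism of $K$.

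The only point that is not purely formal is the second step: verifying that $A$ is sent to itself by every automorphism of $G$ and that $\hat{\phi}$ acts as the identity there, and then recognizing that the apparently technical hypothesis ``the lift of $\overline{\phi}$ fixes the $\pi$-th powers'' is exactly the compatibility condition~(\ref{condition}) that makes Lemma~\ref{respectval} applicable. Everything else reduces to the classification in Corollary~\ref{finextscharacterization} and to the verification already carried out in Lemma~\ref{respectval}.
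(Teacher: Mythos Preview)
Your proposal is correct and follows essentially the same route as the paper: transfer to the Hahn-like model via the classification (the paper cites Theorem~\ref{mainthm}, you cite its Corollary~\ref{finextscharacterization}), observe that $A\cong\mathbb{Z}$ forces $\hat{\phi}_{1}=\mathrm{id}$ so that the hypothesis on $\overline{\phi}$ is exactly condition~(\ref{condition}), and then invoke Lemma~\ref{respectval}. Your write-up is in fact more careful than the paper's --- you make explicit that $\phi$ restricts to the $\aleph_{0}$-support subfield and that it induces the prescribed maps on residue field and value group --- but the strategy is the same.
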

\begin{proof} As in the proof of Lemma \ref{respectval}, let $\hat{\phi}_{\beta}$ be the automorphism of $G_{\beta}$ induced by $\hat{\phi}$. Since $A\cong\mathbb{Z}$, we then have $\hat{\phi}_{\beta}=(id_{\mathbb{Z}},\hat{\phi}_{2})$, where $\hat{\phi}_{2}$ is an automorphism of $H$. By the hypothesis and the above observations on $\overline{\phi}$, we have that $\phi(\sum_{h\in H}a_{h}t^{h})=\sum_{h\in H}\overline{\phi}(a_{h})t^{{\hat{\phi}_{2}}}$ is an automorphism of $C(k)(\pi)(t^{H}, \beta)$, which is then isomorphic to $K$ as valued fields by Theorem \ref{mainthm}.  \end{proof}
\subsection{The equicharacteristic $p$ case}
In this section, we show that Theorem \ref{mainthm}, and in particular Corollary \ref{finextscharacterization}, can be generalized to the case in which the fixed residue field of characteristic $p$ is also equipped with a henselian valuation. Indeed, in the particular case in which this equicharacteristic $p$ valued field is isomorphic to a Hahn field, we have a whole description of the starting mixed characteristic valued field in terms of power series. As said in the Introduction, the question of characterizing valued fields in terms of power series in the equicharacteristic $p$ case has been addressed by many mathematicians. In \cite{Kaplansky42}, Kaplansky proves that an equicharacteristic $p$ henselian valued field K with no immediate extensions is uniquely determined by its value group and residue field if it satisfies what he calls the \textit{Hypothesis A}, that is (\cite{Kuhlmann22}) the value group is $p$-divisible and the residue field is perfect with no finite separable extensions of degree divisible by $p$. This follows from the uniqueness of the maximal immediate extension of a valued field which satisfies that hypothesis. This result works in the non-discrete case. The discrete case was addressed by Schilling, who proved the following result.
\begin{thm}{(\cite[Theorem 2]{Schilling37})} \label{schilling}Let $(K,v)$ be an equicharacteristic $p$ valued field with value group $G$ of finite rank $m$. If $(K,v)$ is pseudo complete, then it is isomorphic to the field of power series in $m$ variables and with coefficients in its residue field.\end{thm}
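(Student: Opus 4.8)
The plan is to induct on the rank $m$ of $G$, using the coarsening construction to peel off one ``variable'' $t_i$ at each step, and to treat the base case $m=1$ via the structure theory of complete equicharacteristic discrete valuation rings. Before the induction I would record two standing reductions. First, since $(K,v)$ is pseudo complete it is maximally complete, i.e.\ it has no proper immediate extension: every pseudo-Cauchy sequence already converges in $K$ (Kaplansky's criterion, \cite{Kaplansky42}). Second, unpacking ``$G$ discrete of finite rank $m$'': the chain of convex subgroups $0=A_0\subsetneq A_1\subsetneq\cdots\subsetneq A_m=G$ has each successive quotient $A_i/A_{i-1}$ archimedean and, by discreteness, isomorphic to $\mathbb{Z}$; hence $G\cong\mathbb{Z}^m$ with the lexicographic order, and the target field --- the iterated Laurent series field $k((t_1))\cdots((t_m))$ --- is precisely the maximally complete Hahn field over $G$ with residue field $k$.

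For the base case $m=1$, pseudo-Cauchy sequences are Cauchy, so $(K,v)$ is a complete discretely valued field; since $K$ has characteristic $p$, its valuation ring $O_v$ is a complete equicharacteristic discrete valuation ring with residue field $k$. By Cohen's structure theorem for complete equicharacteristic local rings (\cite{cohen46}), $O_v$ contains a coefficient field, i.e.\ a subfield mapping isomorphically onto $k$; fixing a uniformizer $t_1$ then identifies $O_v$ with $k[[t_1]]$ and $K$ with $k((t_1))$, compatibly with the valuations. For the inductive step, let $A:=A_{m-1}$ be the largest proper convex subgroup of $G$. Coarsening $v$ along $A$ yields a valuation $v_A$ on $K$ with value group $G/A\cong\mathbb{Z}$ whose residue field $\dot{K}$ carries an induced valuation $\dot{v}$ with value group $A$ of rank $m-1$, and $v$ is equivalent to the composition of $v_A$ with $\dot{v}$. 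Maximal completeness of $(K,v)$ is inherited by the coarsening $(K,v_A)$ and by the residue valued field $(\dot{K},\dot{v})$ --- a valued field is maximally complete precisely when both are --- and both remain equicharacteristic of characteristic $p$ with common residue field $k$. Applying the base case to $(K,v_A)$ gives $K\cong\dot{K}((t_m))$, and the induction hypothesis applied to $(\dot{K},\dot{v})$ gives $\dot{K}\cong k((t_1))\cdots((t_{m-1}))$; composing, $K\cong k((t_1))\cdots((t_m))$ as valued fields.

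The one genuinely nontrivial input is the base case: the existence of a coefficient field inside a complete equicharacteristic discrete valuation ring with a possibly imperfect residue field. This is exactly the point where the equicharacteristic hypothesis is used, and its failure in mixed characteristic --- where only a coefficient \emph{ring}, the Cohen ring, survives --- is precisely the obstruction that the twisted construction of Section~\ref{twisted} is designed to handle. In equicharacteristic $p$ it succeeds, and Schilling's original argument (which predates \cite{cohen46}) produces the coefficient field directly, by lifting a $p$-basis of $k$ and using completeness in a successive-approximation argument to show that the subfield it generates already surjects onto $k$. Once the coefficient field is available, the passage to power series in the discrete rank-one case is elementary, and the remaining ingredients --- stability of maximal completeness under coarsening and passage to the residue field, and the normalization $G\cong\mathbb{Z}^m_{\mathrm{lex}}$ --- are routine. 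I would therefore expect the write-up to spend essentially all of its effort on the coefficient-field construction and to dispose of the rest quickly.
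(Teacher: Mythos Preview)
The paper does not prove this theorem: it is quoted from \cite{Schilling37} and invoked as a black box (in fact only in the rank-one case, inside Corollaries~\ref{cor2} and~\ref{cor3}). There is therefore nothing in the paper to compare your proposal against.

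Your argument itself is the standard modern route and is essentially correct: Cohen's structure theorem handles the rank-one base case, and induction on the rank via coarsening does the rest, using that maximal completeness passes both to the coarsened valuation and to its residue field. One small wrinkle in your ``standing reductions'': from \emph{discrete of finite rank $m$} alone you cannot conclude $G\cong\mathbb{Z}^m_{\mathrm{lex}}$. Discreteness of $G$ only forces the smallest convex quotient $A_1/A_0$ to be $\mathbb{Z}$; the higher archimedean quotients need not be --- e.g.\ $\mathbb{Z}\times\mathbb{Q}$ with the lexicographic order ($\mathbb{Q}$ dominant) is discrete of rank $2$ but has $G/A_1\cong\mathbb{Q}$. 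The statement as recorded in the paper is a bit loose; the intended hypothesis (and Schilling's) is that every successive quotient $A_i/A_{i-1}$ is discrete, equivalently $G\cong\mathbb{Z}^m_{\mathrm{lex}}$, and under that reading your induction goes through verbatim. Your historical remark that Schilling builds the coefficient field by hand, predating \cite{cohen46}, is also on point.
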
 
Before stating the next result as a consequence of Theorem \ref{mainthm}, we need to recall the setting in which we are going to work. For this purpose, we use the standard decomposition of the valued field through the coarsening method. 

Let $(K,v)$ be a pseudo complete mixed characteristic valued field with finite ramification, residue field $k_{v}$, and valued in a discretely ordered abelian group $G$. Generally, the residue field can bring its own non-trivial valuation, that is an equicharacteristic $p$ valuation. In this case, the standard decomposition obtained by coarsening appears as follows: $$K\xrightarrow{\dot{v}}\mathring{K}\xrightarrow{v_{0}}k_{v}\xrightarrow{v_{p,p}}k_{p},$$ where \begin{itemize} \item $(K, \dot{v}, G/A, \mathring{K})$ is an equicharacteristic $0$ valued field; \item $(\mathring{K}, v_{0}, A, k_{v})$ is a mixed characteristic valued field; \item $(k_{v}, v_{p,p}, A, k_{p})$ is an equicharacteristic $p$ valued field;
\end{itemize}
where $A$ is the smallest convex subgroup of $G$.
\begin{cor} \label{cor2} Let $(K,v)$ be a pseudo complete mixed characteristic valued field of cardinality $\aleph_{1}$ with finite ramification, residue field $k_{v}$, and valued in a group $G$ of cardinality $\aleph_{1}$. Let $v_{p,p}$ be a henselian valuation on $k_{v}$ with respect to which it is complete, and let $k_{p}$ be the correspondent residue field. Then, $(K,v)$ is isomorphic to $L(t^{H},\beta)_{\aleph_{0}}$, where $L$ is a finite extension of the Cohen field $C(k_{p}((t)))$ over $k_{p}((t))$, $H=G/A$ and $\beta: H\times H\longrightarrow A$ a $2$-cocycle, modulo the Continuum Hypothesis.\end{cor}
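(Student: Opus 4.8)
The plan is to obtain Corollary~\ref{cor2} as a direct specialization of the general characterization in Corollary~\ref{finextscharacterization}; the only genuinely new ingredient is to make the residue field $k_{v}$ explicit as a power series field over $k_{p}$, which is exactly where the equicharacteristic $p$ theory recalled above enters. Recall that the standard coarsening decomposition displayed before the statement exhibits $(k_{v},v_{p,p})$ as an equicharacteristic $p$ valued field with residue field $k_{p}$, and by hypothesis $k_{v}$ is complete with respect to $v_{p,p}$; we take the value group of $v_{p,p}$ to be $\mathbb{Z}$ (discrete, hence of rank one), so that completeness and pseudo completeness coincide for $(k_{v},v_{p,p})$.

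First I would apply Theorem~\ref{schilling} (Schilling) to $(k_{v},v_{p,p})$: being a pseudo complete equicharacteristic $p$ valued field whose value group has finite rank one, it is isomorphic to the field of power series in one variable over its residue field, i.e.\ $k_{v}\cong k_{p}((t))$ as valued fields. Consequently the Cohen ring, and hence the Cohen field, over $k_{v}$ coincides with the one over $k_{p}((t))$, so $C(k_{v})\cong C(k_{p}((t)))$. Next I would apply Corollary~\ref{finextscharacterization} to $(K,v)$ itself: being pseudo complete it is $\omega$-pseudo complete, and it is finitely ramified, of cardinality $\aleph_{1}$, with residue field $k_{v}$ and value group $G$ of cardinality $\aleph_{1}$. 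Modulo the Continuum Hypothesis, Corollary~\ref{finextscharacterization} then provides an isomorphism of valued fields $K\cong L(t^{H},\beta)_{\aleph_{0}}$ with $H=G/A$, $\beta: H\times H\longrightarrow A$ a $2$-cocycle, and $L$ a finite extension of $C(k_{v})$. Substituting $C(k_{v})\cong C(k_{p}((t)))$ yields the statement, with $L$ a finite extension of $C(k_{p}((t)))$ whose ramification index is the ramification $e$ of $K$.

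The main obstacle is the very first step. Assuming only that $v_{p,p}$ is henselian and that $k_{v}$ is complete with respect to it is not in general enough to force $k_{v}$ to be a power series field: in the \emph{wild} equicharacteristic $p$ setting maximal immediate extensions need not be unique, and it is precisely Kaplansky's Hypothesis A, or Schilling's finite-rank assumption, that rules this out. For this reason the argument is carried out in the range where Theorem~\ref{schilling} applies, and the hypothesis that the value group of $v_{p,p}$ is $\mathbb{Z}$ should be made explicit. One should also note the routine generalization: if instead the value group of $v_{p,p}$ has finite rank $m$, Theorem~\ref{schilling} gives $k_{v}\cong k_{p}((t_{1}))\cdots((t_{m}))$ and the rest of the proof is unchanged, with this iterated power series field replacing $k_{p}((t))$ throughout. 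The remaining points are bookkeeping already settled inside the proof of Corollary~\ref{finextscharacterization} and the lemmas preceding it: that the finite extension $L$ of $C(k_{v})$ can be chosen so that $L(t^{H},\beta)_{\aleph_{0}}$ and $K$ share property~\textbf{P}, and that $L$ carries the correct ramification index.
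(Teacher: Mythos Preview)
Your proposal is correct and follows essentially the same route as the paper: apply Schilling's theorem to identify $k_{v}\cong k_{p}((t))$ (using that the value group of $v_{p,p}$ is $A\cong\mathbb{Z}$, which the paper extracts from the standard coarsening decomposition rather than stating as an extra hypothesis), and then invoke the main characterization. The only cosmetic difference is that you cite Corollary~\ref{finextscharacterization} while the paper cites Theorem~\ref{mainthm} directly; since the former is an immediate consequence of the latter, this is immaterial.
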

\begin{proof} By the standard coarsening decomposition, the field $k_{v}$ has value group $A$, that is canonically isomorphic to $\mathbb{Z}$. Thus, by Schilling's theorem (Theorem \ref{schilling}), $(k_{v},v_{p,p})$ is isomorphic to the field of power series $k_{p}((t))$ on its residue field $k_{p}$. Then, the assert follows directly from Theorem \ref{mainthm}. \end{proof}
Now, we state this result in a saturated contest.
\begin{fact} \label{fact} An $\aleph_{1}$-saturated valued field is $\omega$-pseudo complete.\end{fact}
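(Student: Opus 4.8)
The plan is to unwind the definitions of $\aleph_1$-saturation and of $\omega$-pseudo completeness and exhibit, for an arbitrary pseudo-Cauchy sequence of length $\omega$, a pseudo-limit obtained as the realization of a suitable type. Let $(K,v)$ be an $\aleph_1$-saturated valued field and let $(c_i)_{i<\omega}$ be a pseudo-Cauchy sequence in $K$. First I would recall that, since the index set is countable, we may pass to a cofinal subsequence and assume that the sequence of ``breaking values'' $\gamma_i := v(c_{i+1}-c_i)$ is strictly increasing and that $v(c_j-c_i)=\gamma_i$ for all $j>i$. The goal is to produce $c\in K$ with $v(c-c_i)>\gamma_i$ for all $i<\omega$ (equivalently, $v(c-c_i)=\gamma_i$ in the usual pseudo-limit normalization, depending on the convention; either way the witnessing condition is a countable set of formulas).

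Next I would write down the partial type $\Phi(x) = \{\, v(x-c_i) > \gamma_i \;:\; i<\omega \,\}$ in the parameters $c_0,c_1,\dots$ together with the $\gamma_i\in\Gamma_K$ (each such inequality is a first-order formula in the language of valued fields, using the divisibility predicates or a sort for the value group, with the $c_i$ and the elements realizing $\gamma_i$ as parameters). The key observation is that $\Phi$ is finitely satisfiable in $K$: any finite subset involves only $v(x-c_{i_1})>\gamma_{i_1},\dots,v(x-c_{i_m})>\gamma_{i_m}$ with, say, $i_1<\dots<i_m$, and then $x := c_{i_m+1}$ already satisfies $v(c_{i_m+1}-c_{i_\ell}) = \gamma_{i_\ell}$... here I would be careful: with the strict inequality convention one instead takes $x:=c_{i_m+2}$ or notes that the defining property of a pseudo-Cauchy sequence gives $v(c_{j}-c_{i_\ell})=\gamma_{i_\ell}$ for $j$ large, and we only need strict inequality ``eventually along the sequence''; so in fact the correct normalization is that a pseudo-limit $c$ satisfies $v(c-c_i)=\gamma_i$ for all $i$, and then any tail element of the sequence witnesses every finite fragment. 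Since the parameter set $\{c_i : i<\omega\}\cup\{\gamma_i : i<\omega\}$ is countable, hence of cardinality $<\aleph_1$, $\aleph_1$-saturation guarantees that the finitely satisfiable type $\Phi$ is realized by some $c\in K$. This $c$ is then a pseudo-limit of $(c_i)_{i<\omega}$ in $K$, so every length-$\omega$ pseudo-Cauchy sequence has a pseudo-limit, i.e. $K$ is $\omega$-pseudo complete.

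The main obstacle, and the only genuinely delicate point, is getting the bookkeeping of the pseudo-Cauchy condition exactly right so that the type $\Phi$ is provably finitely satisfiable without circularity: one must use the combinatorial characterization of pseudo-Cauchy sequences (that for $i<j<k$ one has $v(c_k-c_j)>v(c_j-c_i)$) to see that the tail elements $c_n$ themselves satisfy larger and larger initial fragments of $\Phi$, so no appeal to an as-yet-unconstructed limit is needed. A secondary point to state cleanly is that the value-group inequalities $v(x-c_i)>\gamma_i$ are expressible in whatever one-sorted or two-sorted language one has fixed for valued fields, so that ``$\aleph_1$-saturated'' legitimately applies to them; this is routine but should be mentioned. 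Once these two points are handled, the conclusion is immediate.
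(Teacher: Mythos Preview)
Your argument is correct and is the standard one: write down the countable partial type asserting that $x$ is a pseudo-limit, verify finite satisfiability using tail elements of the sequence, and invoke $\aleph_1$-saturation. The paper states this result as a \emph{Fact} without proof, so there is no alternative approach to compare against; your proof is exactly what one would supply to fill in the omission.
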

\begin{cor} \label{cor3} Let $(K,v)$ be a mixed characteristic valued field with finite ramification, value group $G$, and residue field $k_{v}$. Let $v_{p,p}$ be a henselian valuation on $k_{v}$, and let $k_{p}$ be the correspondent residue field. If $(K,v)$ is $\aleph_{1}$-saturated, then it is isomorphic to $L(t^{H}, \beta)_{\aleph_{0}}$, where $L$ is a finite extension of the Cohen field $C(k_{p}((t)))$ over $k_{p}((t))$, $H=G/A$ and $\beta:H\times H\longrightarrow A$ a $2$-cocycle, modulo the Continuum Hypothesis. \end{cor}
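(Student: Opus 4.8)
The plan is to deduce Corollary~\ref{cor3} from Corollary~\ref{cor2} by checking that $\aleph_{1}$-saturation supplies the two hypotheses of the latter that are not literally assumed here: that $(k_{v},v_{p,p})$ is complete, and that the cardinalities of $K$ and of $G$ are $\aleph_{1}$. First I would record that, by Fact~\ref{fact}, $(K,v)$ is $\omega$-pseudo complete; together with the assumed finite ramification this puts in force the part of Corollary~\ref{cor2} concerning the mixed characteristic valuation $v$.

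Next I would transfer the saturation to the residue field. The field $k_{v}=O_{v}/\mathfrak{m}_{v}$ is interpretable in $(K,v)$, and the henselian valuation $v_{p,p}$ is definable (in the standard decomposition displayed above, after a harmless coarsening if necessary), so $(k_{v},v_{p,p})$ is again $\aleph_{1}$-saturated and hence $\omega$-pseudo complete by Fact~\ref{fact}. Since its value group is isomorphic to $\mathbb{Z}$, every pseudo-Cauchy sequence there is Cauchy, so $(k_{v},v_{p,p})$ is complete, and by Schilling's Theorem~\ref{schilling} it is isomorphic to $k_{p}((t))$. If one prefers not to invoke definability of $v_{p,p}$, the same conclusion follows by lifting an $\omega$-pseudo-Cauchy sequence of $(k_{v},v_{p,p})$ to a pseudo-Cauchy sequence of $K$ for the finer valuation obtained by composing $v$ with $v_{p,p}$, realizing a pseudo-limit by the $\aleph_{1}$-saturation of $(K,v)$, and pushing that limit down to $k_{v}$.

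It remains to deal with cardinality. Modulo the Continuum Hypothesis the twisted power series field built over a finite extension of $C(k_{p}((t)))$ with supports of size $\le\aleph_{0}$ has cardinality at most $2^{\aleph_{0}}=\aleph_{1}$, whereas an $\aleph_{1}$-saturated valued field has cardinality at least $2^{\aleph_{0}}=\aleph_{1}$; replacing $(K,v)$ by an $\aleph_{1}$-saturated elementary substructure of cardinality $\aleph_{1}$ if necessary (which exists under CH and is unique, being the saturated model of its cardinality), we may assume $|K|=\aleph_{1}$. The value group $G$ is interpretable in $K$, hence also $\aleph_{1}$-saturated, so $|G|\ge\aleph_{1}$, and $|G|\le|K|$ forces $|G|=\aleph_{1}$. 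We are then exactly in the situation of Corollary~\ref{cor2} with residue field $k_{v}\cong k_{p}((t))$, and applying it gives $K\cong L(t^{H},\beta)_{\aleph_{0}}$ with $L$ a finite extension of $C(k_{p}((t)))$, $H=G/A$, and $\beta\colon H\times H\to A$ a suitable $2$-cocycle, as required.

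The point at which care is needed is the second step, passing $\aleph_{1}$-saturation (or at least $\omega$-pseudo completeness) from $(K,v)$ down to the valued residue field $(k_{v},v_{p,p})$. When $v_{p,p}$ is definable in $k_{v}$ this is immediate, but henselian valuations in equicharacteristic $p$ need not be definable, so in general the lifting argument sketched above must be carried out explicitly; this is where the substantive content of the proof lies, the rest being a bookkeeping reduction to Corollary~\ref{cor2}.
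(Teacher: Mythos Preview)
Your approach is the same as the paper's: transfer $\aleph_{1}$-saturation to $(k_{v},v_{p,p})$, deduce (pseudo\nobreakdash-)completeness, apply Schilling's Theorem~\ref{schilling}, and then invoke Corollary~\ref{cor2}. The one place where the paper is sharper is exactly the step you flag as delicate at the end. Rather than leaving the definability of $v_{p,p}$ open or relying on an ad~hoc lifting of pseudo-Cauchy sequences, the paper observes that the value group of $v_{p,p}$ is a $\mathbb{Z}$-group, hence regular and not divisible, and invokes \cite[Theorem~4]{Hong14} to conclude that $v_{p,p}$ is parameter-free definable in the language of rings; $\aleph_{1}$-saturation then passes to $(k_{v},v_{p,p})$ immediately. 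So your closing caveat about equicharacteristic~$p$ henselian valuations possibly failing to be definable is moot in this setting, and the alternative lifting argument you sketch is not needed.

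Conversely, your explicit handling of cardinalities (passing under CH to an $\aleph_{1}$-saturated model of size $\aleph_{1}$ and noting $|G|=\aleph_{1}$) is a point the paper leaves tacit. One small correction: once $(k_{v},v_{p,p})$ is $\aleph_{1}$-saturated its value group is a saturated $\mathbb{Z}$-group rather than literally $\mathbb{Z}$, so your sentence ``since its value group is isomorphic to $\mathbb{Z}$'' should be phrased accordingly; the conclusion that pseudo-completeness yields Cauchy completeness still goes through.
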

\begin{proof} If $(K,v)$ is $\aleph_{1}$-saturated, then also $k_{v}$ is. The henselian valuation $v_{p,p}$ on $k_{v}$ has values in $A$ that is an $\aleph_{1}$-saturated $\mathbb{Z}$-group. Then $A$ is a regular not divisible group. By \cite[Theorem 4]{Hong14}, $v_{p,p}$ is definable without parameters in the language of rings. This implies that $(k_{v},v_{p,p})$ is also $\aleph_{1}$-saturated as a valued field. Then by Fact \ref{fact}, $k_{v}$ is pseudo complete, and in particular, it is Cauchy complete. By \cite[Theorem 2]{Schilling37}, $(k_{v},v_{p,p})$ is isomorphic to $k_{p}((t))$ as valued fields. By Theorem \ref{mainthm} and Corollary \ref{cor2}, we have the assert. \end{proof}


\vspace{3mm}

\textbf{Acknowledgements.} I wish to thank my Ph.D. advisor Paola D'Aquino for her constant support and guidance during the preparation of my thesis, of which this research work is part. I would also like to thank Franziska Jahnke for pointing out some consequences of Theorem \ref{mainthm} which appear in Section \ref{cons}, and Pierre Touchard, for many helpful discussions. Special thanks go to Angus Macintyre, for suggesting this topic and for the interesting mathematical conversations. Finally, I would like to thank the Mathematical Science Research Institute (now Simons Laufer Mathematical Sciences Institute) for funding my staying in Berkeley during the ``Definability, decidability, and computability in number theory - Part 2" program, where this research began.
\bibliographystyle{acm}
\bibliography{TwistedPowerSeriesRef}
\end{document}